\documentclass[12pt,reqno,notitlepage]{amsart}
\usepackage{amsmath,amsfonts,amsthm,amssymb}
\usepackage{enumerate}
\usepackage{indentfirst}
\usepackage[dvips]{graphicx}
\usepackage[all]{xy}
\usepackage{stackrel}
\usepackage{marginnote}

\usepackage[centering, includeheadfoot, hmargin=1.0in, tmargin=1.0in, 
bmargin=1in, headheight=6pt]{geometry}
\usepackage[latin1]{inputenc}
\usepackage[T1]{fontenc}
\usepackage{verbatim}
\usepackage{xcolor}
\usepackage[normalem]{ulem}

\usepackage[colorinlistoftodos]{todonotes}

\usepackage{hyperref}  
\hypersetup{
pdfborder={0 0 0}, 
colorlinks=true, 
citecolor=blue,
linktoc=page,
pdfauthor={Thiago Fassarella and Frank Loray}, 
pdftitle={Higss bundles}
}
\renewcommand{\ref}{\hyperref}

\newtheorem{thm}{Theorem}[section]

\newtheorem{prop}[thm]{Proposition}

\newtheorem{lemma}[thm]{Lemma}

\newtheorem{cor}[thm]{Corollary}
\newtheorem{remark}[thm]{Remark}

\theoremstyle{definition}

\renewcommand{\P}{\mathbb{P}}
\newcommand{\C}{\mathbb{C}}

\numberwithin{equation}{section}

\newcommand{\terminou}{\hfill$\lrcorner$}

\newcommand{\SL}{\mathrm{SL}_2}
\newcommand{\elem}{\mathrm{elem}}

\begin{document}
\title{Moduli of Higgs bundles over the two punctured elliptic curve}
\author[T. Fassarella]{Thiago Fassarella}
\address{\color{black}Universidade Federal Fluminense, Rua Alexandre Moura 8, S\~ao Domingos, 24210-200 Niter\'oi RJ, Brasil.}
\email{\color{black}tfassarella@id.uff.br}

\author[F. Loray]{Frank Loray}
\address{Univ Rennes, CNRS, IRMAR, UMR 6625, F-35000 Rennes, France}
\email{frank.loray@univ-rennes.fr}

\thanks{2010 Mathematics Subject Classification. Primary 34M55; Secondary 14D20, 32G20, 32G34.
Key words and phrases: Higgs bundles, parabolic structure, elliptic curve, Hitchin fibration, spectral curve. 
T. Fassarella acknowledges the support from CNPq Universal 10/2023. F. Loray is supported by CNRS, ANR-11-LABX-0020-0 Labex program Centre Henri Lebesgue  and ANR-25-CE40-1360 project IsoMoDyn.  The authors also thank  Brazilian-French Network in Mathematics and CAPES-COFECUB  932/19 and 1017/24.}
 \date{\today}

\begin{abstract}
We study moduli spaces of Higgs bundles with two poles on an elliptic curve. We describe all singular fibers of the Hitchin map, including the nilpotent cone. To achieve this, we consider a modular map that lifts Higgs bundles with five poles on the Riemann sphere to Higgs bundles on the elliptic curve. This map is a two-sheeted covering and we analyze its Galois involution. We prove that the modular map is surjective and determine its ramification locus. In particular, we also obtain an explicit description of the singular locus of the moduli space.
\end{abstract}

\maketitle

\tableofcontents

\section{Introduction}

Let $C$ be a complex elliptic curve and let $D=t_1+t_2$ be a divisor defined by two distinct points of $C$. We study in this paper the moduli space $\mathcal H(C)$  of $\SL$ parabolic Higgs bundles over $(C,D)$. An element of $\mathcal H(C)$ consists of a $S$-equivalence class $(E, {\bf p}, \theta)$, where $E$ is a rank two vector bundle on $C$, and $\theta$ is a traceless parabolic Higgs field on $(C,D)$, which is nilpotent with respect to the parabolic direction ${\bf p}$,    see Section~\ref{section:basic} for the detailed definition. We are mostly interested in determining all singular fibers of the Hitchin map, the map that sends $\theta$ to the quadratic differential $\det(\theta)$.

Closely related, moduli spaces of connections with logarithmic poles over a complex compact curve are classical objects. 
They correspond to spaces of initial conditions for  isomonodromy PDE such as Painlev\'e equations or Garnier systems. In particular,  to the Painlev\'e VI equation,  for the four punctured complex projective line \cite{IIS1}. The geometry of these moduli spaces has been recently explored in low genus, for instance in \cite{LS} for curves  of genus zero, and  \cite{FL,FLM} for curves of genus 1, the birational and symplectic geometry have been considered.  Another fact is that these moduli spaces can be interpreted, via Riemann-Hilbert correspondence, as moduli spaces of representations of the fundamental group of the punctured curve with fixed spectral data around the punctures, depending only on the topology of the base curve.

Going back to the twice punctured elliptic curve, the modular map induced by the elliptic cover $C\to \P^1$ has been studied  from the point of view of both representations and connections in \cite{DL, LR}. In particular, they show that this map is a degree two ramified covering and give a description of its ramification locus. The nonabelian Hodge correspondence \cite{H87, Sim90, Sim92}, between Higgs bundles and representations of the fundamental group, motivated us to explore the behaviour of the modular map cited above with respect to the Hitchin fibration. Finally, to fully understand the singular fibers of the Hitchin map in the specific case of a twice-punctured elliptic curve, we rely on the results of \cite{FL1, FL2}.

We now describe our results. Let us consider the degree two elliptic cover $\pi: C \to \P^1$, branched over four distinct points $\{0,1,\lambda, \infty\}$, and with $\pi(t_1)=\pi(t_2)=t$. We are led to consider the moduli space $\mathcal H(\P^1)$ of $\SL$ parabolic Higgs bundles over $(\P^1, \Lambda)$, where $\Lambda=0 + 1 + \lambda + \infty + t$ is the divisor defined by those five points.

It turns out that $\mathcal H(\P^1)$ and $\mathcal H(C)$ have both dimension four. In addition,  we can associate to any Higgs bundle in $\mathcal H(\P^1)$ a Higgs bundle in $ \mathcal H(C)$ by doing the pullback $\pi^*$ followed by a composition of four elementary transformations over the divisor \[
R=w_0+w_1+w_{\lambda}+w_{\infty}
\] 
formed by ramification points  of $\pi$.  This correspondence gives a degree two morphism $\Phi:\mathcal H(\P^1)\to \mathcal H(C)$ which preserves the Hitchin fibration, i.e. the following diagram commute:
 \[
\xymatrix { 
 \mathcal H(\P^1) \ar@{->}[d]_{\det} \ar@{->}[r]^{\Phi}  &  \mathcal H(C) \ar@{->}[d]^{\det} \\
\Gamma(\P^1,\omega_{\P^1}^{\otimes 2}(\Lambda)) \ar@{->}[r]^{\pi^*}    &        \Gamma(C,\omega_{C}^{\otimes 2}(D))
}
\]
and both spaces of quadratic differentials are two dimensional. Here, $\Gamma(L)$ denotes the space of global sections of a given line bundle $L$ on the base curve.

The map $\Phi$ plays a major role in this work, as we now describe. Given elements $s\in \Gamma(\omega_{\P^1}^{\otimes 2}(\Lambda))$ and $r=\pi^*(s)\in\Gamma(\omega_{C}^{\otimes 2}(D))$ we can associate their  spectral curves, denoted here  by $X_s$ and $Y_r$, respectively. 
The locus of singular spectral curves is preserved by the isomorphism $\pi^*$. It corresponds,  in each side, to a union of five lines. On the $\P^1$ side, the general spectral curve $X_s$ is a smooth curve of genus $2$ branched over $6$ distinct points 
\[
0,1,\lambda, \infty, t, \rho
\]
of $\P^1$ and the corresponding Hitchin fiber $\det^{-1}(s)$ is isomorphic ${\rm Pic}^{3}(X_s)$. Here,  ${\rm Pic}^k$ denotes the variety (isomorphic to the Jacobian) which parametrizes classes of isomorphisms of line bundles of degree $k$.  On the elliptic side, the general spectral curve $Y_r$ is a hyperelliptic  curve of genus $3$ branched over $4$ distinct points 
\[
t_1, t_2, u_1, u_2 
\]
of $C$, with $\{u_1,u_2\}=\pi^{-1}(\rho)$. 
See Figure~\ref{figram}.

\begin{center}
\begin{figure}[h]
\centering
\includegraphics[angle=90, height=3.5in]{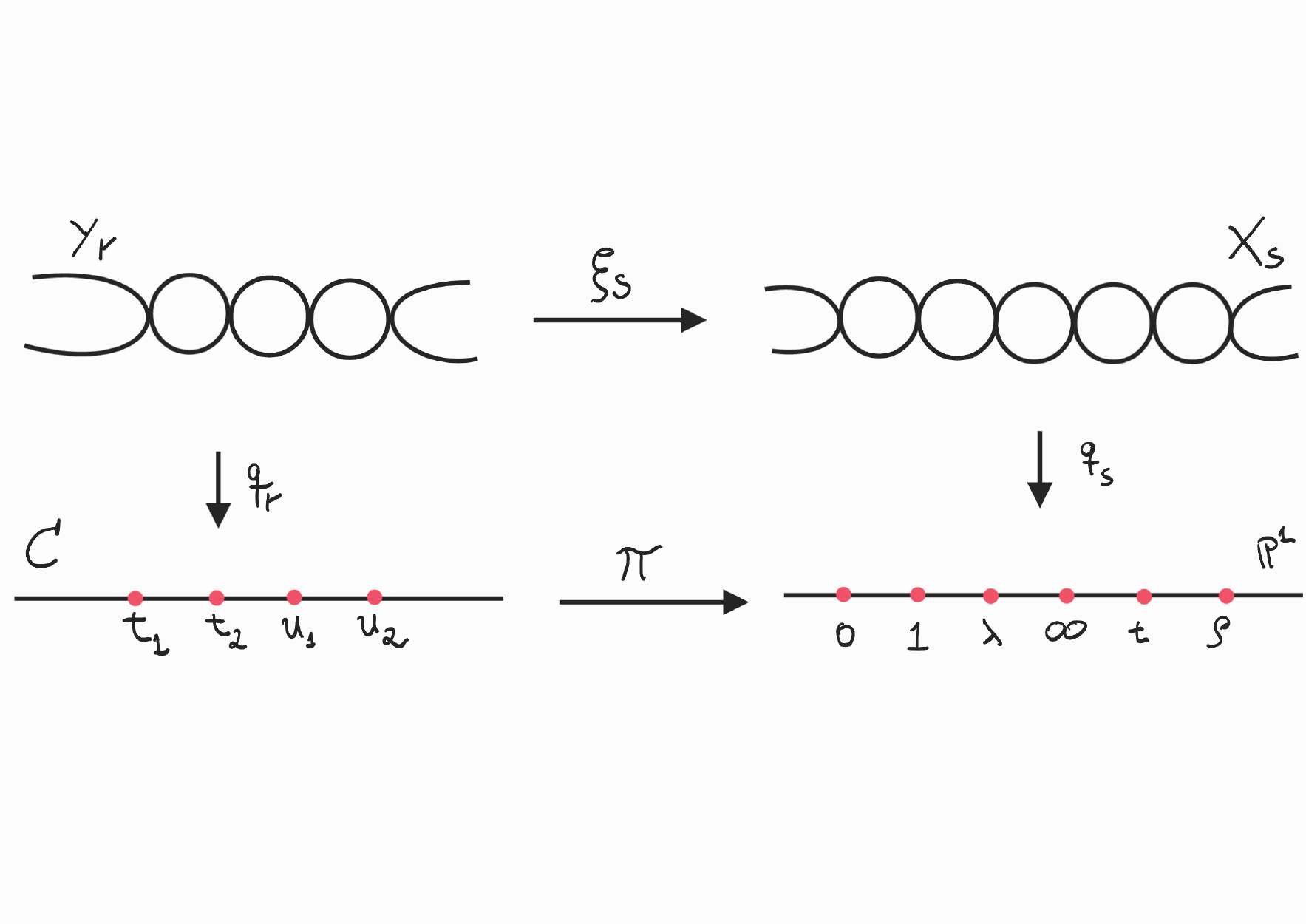}
\caption{Red points are branch points.}
\label{figram}
\end{figure}
\end{center}

The fiber ${\rm det}^{-1}(r)$,  over a $r$ corresponding to a smooth spectral curve $Y_r$,  is an Abelian variety isomorphic to the Prym variety
\[
{\rm Prym}(Y_r/C) = \left\{ M\in {\rm Pic}^{2}(Y_r)\;:\;\; \det((q_r)_*M)=\mathcal O_C\right\}
\]
where $q_r: Y_r \to C$ is the 2:1 cover branched over $t_1, t_2, u_1, u_2$.  The restriction of the modular map $\Phi$ to each smooth Hitchin fiber gives a map
\[
\Phi_s:  {\rm Pic}^3(X_s) \to  {\rm Prym}(Y_r/C) 
\]
which has been studied in \cite{FL1}.  There, it is proved  that there is an \'etale 2:1 cover $\xi_s: Y_r \to X_s$ such that 
\[
\Phi_s(M) = \xi_s^*(M) \otimes q_r^*(2w_{\infty} - R)
\]
for all $M\in {\rm Pic}^3(X_s)$.

The first goal of this paper is to extend the study of $\Phi$ to any singular Hitchin fiber, including the nilpotent cone. For irreducible singular spectral curves we are led to consider the compactified Picard variety $\overline{{\rm Pic}}^3(X_s)$ which parametrizes isomorphism classes of torsion free rank one sheaves on $X_s$ of degree three, and the compactified Prym variety of the ramified covering $q_r: Y_r \to C$ 
\[
\overline{{\rm Prym}}(Y_r\setminus C) = \left\{ M\in \overline{{\rm Pic}^{2}}(Y_r)\quad :\quad \det((q_r)_*M)=\mathcal O_C\right\}.
\]
($M$ torsion free of rank 1 implies  $(q_r)_*M$  torsion free of rank 2, then a vector bundle because $C$ is smooth).
We are supported by the results of \cite{FL2}, where all fibers have been described. 

Using such a  description of singular Hitchin fibers in $\mathcal H(\P^1)$ and the degree two ramified covering $\Phi$,  a precise description of any singular Hitchin fiber in $\mathcal H(C)$ is obtained:  

\begin{thm}\label{thm:mainintro}
Let $\det^{-1}(r)$ be a singular fiber of the Hitchin map $\det: \mathcal H(C) \to \C^2$ and $Y_r$ be the corresponding spectral curve. Then at least one of the following holds: 
\begin{enumerate}
\item\label{1intro} \underline{(Irreducible fibers)} $Y_r$ is an irreducible curve having a single node over $w_{\rho}$, $\rho\in\{0,1,\lambda, \infty\}$. The fiber   $\det^{-1}(r)\simeq \overline{{\rm Prym}}(Y_r\setminus C)$ is irreducible
 (Theorem~\ref{thm:translat}).  
\item\label{3intro} \underline{(Twice component fibers)} $Y_r$ consists of two copies of $C$ meeting in two points. The fiber  $\det^{-1}(r)$ has two irreducible components, isomorphic between them
(Theorem~\ref{thm:CP1}). 
\item\label{coneintro} \underline{(Nilpotent cone)} $Y_r$ is irreducible and non-reduced, and the nilpotent cone $\det^{-1}(0)$ has $9$ irreducible components (Theorem~\ref{prop:nilpcon}).   
\end{enumerate}
Moreover, all these cases arise in the moduli space $\mathcal H(C)$. 
\end{thm}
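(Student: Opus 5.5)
The plan is to reduce everything to the genus zero side through the modular map $\Phi$ and the commutative diagram with $\pi^*$. First I will classify singular spectral curves. Write $r=\pi^*(s)$ with $s\in\Gamma(\omega_{\P^1}^{\otimes 2}(\Lambda))$. The spectral curve $X_s$ is the double cover of $\P^1$ branched over $0,1,\lambda,\infty,t,\rho$, where $\rho=\rho(s)$ is the extra zero of $s$. Then $Y_r$ is the fibre product $X_s\times_{\P^1}C$, normalized along the ramification of $\pi$. Concretely, $Y_r$ is the double cover of $C$ branched over $t_1,t_2,u_1,u_2$.

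The singular locus is the union of five lines. The cases are as follows:
\begin{itemize}
\item $\rho\in\{0,1,\lambda,\infty\}$: the branch points $u_1=u_2=w_\rho$ collide, so $Y_r$ acquires a single node over $w_\rho$ and stays irreducible, of geometric genus $2$.
\item $\rho=t$: $s$ is, up to scalar, the square of a section vanishing at $t$, and $\pi^*s$ becomes a square on $C$. Here I will check that the double cover of $C$ is $\mathcal O_C$-trivial after the elementary transformations, so $Y_r$ splits into two copies of $C$ glued over $\{t_1,t_2\}$.
\item $s=0$: $Y_r$ is the non-reduced double zero section.
\end{itemize}
The main technical point in this step is the $\rho=t$ case. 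There I must verify, using the explicit twisting by $q_r^*(2w_\infty-R)$ coming from \cite{FL1}, that the two local branches really close up to two global sheets rather than an irreducible curve nodal at $t_1,t_2$.

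Second, I will describe each fibre as the image under $\Phi$ of the corresponding fibre in $\mathcal H(\P^1)$ described in \cite{FL2}. Since $\Phi$ is surjective of degree two (as announced in the abstract), $\det^{-1}(r)=\Phi(\det^{-1}(s))$. The key is to extend the formula $\Phi_s(M)=\xi_s^*(M)\otimes q_r^*(2w_\infty-R)$ to torsion free sheaves: $\xi_s$ remains a finite étale double cover even over singular $X_s$, so pullback preserves torsion free rank one sheaves and lands in $\overline{{\rm Prym}}(Y_r\setminus C)$ by the determinant condition.

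In the irreducible nodal case, $\overline{{\rm Pic}}^3(X_s)$ is irreducible, so its image is irreducible. I will then match this image with $\overline{{\rm Prym}}$ by a dimension and degree count, which gives Theorem~\ref{thm:translat}.

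In the reducible case $\rho=t$, $X_s$ is the union of two rational curves or an elliptic-type configuration by \cite{FL2}. The fibre there has components permuted in pairs by the Galois involution of $\Phi$. I will show the image consists of exactly two components exchanged by the deck involution of $Y_r\to C$, which makes them isomorphic (Theorem~\ref{thm:CP1}).

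Third, for the nilpotent cone I will take the component list of $\det^{-1}(0)\subset\mathcal H(\P^1)$ from \cite{FL2}. I will then compute how the Galois involution of $\Phi$ acts on it, identifying components that are swapped and components that are preserved (hence mapping 2:1). Counting orbits should give $9$ components, and I will cross-check this by directly listing the nilpotent Higgs bundles on $C$. For such a bundle, the kernel $L\subset E$ of $\theta$ with $\theta:E/L\to L\otimes\omega_C(D)$ is classified by $\deg L$ and the parabolic positions at $t_1,t_2$; this recovers the components as closures of strata, giving Theorem~\ref{prop:nilpcon}. I expect this orbit count, which requires knowing which components of the $\P^1$ nilpotent cone lie in the ramification locus of $\Phi$, to be the main obstacle. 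I will attack it using the ramification locus description from \cite{DL, LR}, transported via nonabelian Hodge to the Higgs side.

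Finally, the statement that all cases arise is immediate, since each of the five lines and the origin are hit by $\pi^*$ and $\Phi$ is surjective.
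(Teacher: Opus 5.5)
\textbf{Main gap: surjectivity of $\Phi$ is assumed, not proved.} Your key step is $\det^{-1}(r)=\Phi(\det^{-1}(s))$, justified ``since $\Phi$ is surjective (as announced in the abstract)''. That is circular. In the paper, surjectivity of $\Phi$ is proved only at the end, by assembling fiberwise surjectivity on the nilpotent cone, the smooth fibers, the irreducible nodal fibers and the reducible fibers, which is exactly the content of this theorem. You would need an independent argument, for instance properness of $\Phi$ (which follows from properness of the Hitchin maps and the commutative square) together with irreducibility of $\mathcal H(C)$; you give none.

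So the real work is missing: showing that every Higgs bundle on $(C,D)$ with a given singular spectral curve comes from $\mathcal H(\P^1)$. The paper does this case by case.
\begin{itemize}
\item For $r=0$, it first classifies Higgs bundles whose parabolic bundle is ${\bf a}_c$-unstable; only $8$ of them are nilpotent, and they are the images of the $16$ Hodge bundles. Over the strictly semistable curve $C^0$, a nilpotent field is $S$-equivalent to $0$ except over the four torsion points, where it is $\sigma$-invariant. Everywhere else, the resultant in $(c_1,c_2)$ of the explicit Hamiltonians $h_1,h_2$ is, up to a constant, $\mathfrak v(z_1)\mathfrak h(z_2)c(z_1,z_2)^2$. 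Hence nonzero nilpotent fields lie only over the $8$ special lines, one $\C^*$-orbit each, and these are already in the image.
\item The orbit count $17\to 9$ that you single out as the main obstacle is the easy part: the $16$ lines of $\mathcal S$ go pairwise onto the $8$ lines. Importing \cite{DL,LR} through nonabelian Hodge would only address ramification, not completeness of the component list.
\item For reducible $Y_r$, the paper uses a trichotomy for the eigenvector curve $Y_\theta$. Fields holomorphic at both $t_i$ are diagonal, hence $\sigma$-invariant. Fields with poles at both are obtained from these by $[\otimes\mathcal O_C(w_\infty)]\circ\elem_{t_1+t_2}$. The mixed case is strictly semistable and $S$-equivalent to a diagonal field.
\item For irreducible nodal $Y_r$, the paper needs irreducibility of $\overline{{\rm Prym}}(Y_r\setminus C)$, imported from \cite{GO,Mu}, so that the closed $2$-dimensional image of ${\bf F}_{hol}$ fills the fiber. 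Your ``dimension and degree count'' does not supply this.
\end{itemize}

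\textbf{Incorrect intermediate claims.}
\begin{itemize}
\item For $\rho=t$, $X_s$ is irreducible with a single node over $t$ and normalization isomorphic to $C$; it is not a union of rational curves.
\item In that case the two components ${\bf F}_{hol},{\bf F}_{app}$ of the fiber over $s$ are each \emph{preserved} by $\elem_I$, not permuted. On the $\P^1$-bundle ${\bf F}_{hol}$, $\elem_I$ acts fiberwise with two fixed points. Had the two components been swapped, the image would have one component, not two.
\item The deck involution of $Y_r\to C$ acts on Higgs bundles as $\theta\mapsto-\theta$. This preserves holomorphy at $t_1,t_2$, so it preserves each of ${\bf G}_{hol}=\Phi({\bf F}_{hol})$ and ${\bf G}_{app}=\Phi({\bf F}_{app})$ rather than exchanging them. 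The isomorphism between them is $[\otimes\mathcal O_C(w_\infty)]\circ\elem_{t_1+t_2}$, which trades fields holomorphic at both $t_i$ for fields with poles at both.
\item For $\rho\in\{0,1,\lambda,\infty\}$, $\xi_s$ is not \'etale over the node. On normalizations it is a double cover of a genus $1$ curve by a genus $2$ curve, ramified at the two points over the node. So your extension of $M\mapsto\xi_s^*M\otimes q_r^*(2w_\infty-R)$ to torsion-free sheaves is unjustified.
\item The identification $\det^{-1}(r)\simeq\overline{{\rm Prym}}(Y_r\setminus C)$ comes directly from BNR instead, since the nonzero nilpotent residues determine ${\bf p}$.
\item The $\P^1$-side fiber in case (1) is ${\bf F}_{hol}\cup{\bf F}_{app}$, two components swapped by $\elem_I$; it is not $\overline{{\rm Pic}}^3(X_s)$.
\item Your ``main technical point'' for $\rho=t$ is a non-issue. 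Here $r\in\Gamma(\omega_C^{\otimes 2})$ equals $\alpha^2$ for a global $1$-form $\alpha$, so $Y_r=\{z=\alpha\}\cup\{z=-\alpha\}$. The two sheets meet over $t_1,t_2$, the zeros of $\alpha$ viewed as a section of $\omega_C(D)$. The twist $q_r^*(2w_\infty-R)$ plays no role.
\item ``All cases arise'' needs only that the nonempty fiber over $s$ maps into the fiber over $r$; surjectivity is not required.
\end{itemize}
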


In the main text, the reader will find a more detailed description of the three cases listed in Theorem~\ref{thm:mainintro}. The fiber in case~\eqref{1intro} is  obtained by identifying two sections of a $\P^1$-bundle over an elliptic curve via a translation  (Theorem~\ref{thm:translat}). In the terminology of \cite{GO}, ${\rm det}^{-1}(r)$ can be identified with a fiber of the Hitchin map defined on the moduli space of $L$-twisted Higgs pairs, see Remark~\ref{rmk:GO}. In case~\eqref{3intro}, each component of the Hitchin fiber is a quotient of a $\P^1$-bundle over an elliptic curve by an involution which preserves the $\P^1$-fibration and has two fixed points on every fiber $\P^1$ (Proposition~\ref{prop:fimGhol}). The nilpotent cone contains the locus of semistable parabolic bundles, taking the zero Higgs field;  this locus is isomorphic to $\P^1\times \P^1$ and is clearly fixed by the natural action of $\C^*$ on Higgs fields. Each of the other eight components of the nilpotent cone  contains a nonzero Higgs field that is fixed by the action of $\C^*$ and whose underlying parabolic vector bundle is unstable (Corollary~\ref{cor:nilpunstable}).

Then we proceed to describe the branch  locus of $\Phi$ (Theorem~\ref{thm:ramiC}): 

\begin{thm}\label{thmintro:branchofphi}
The map $\Phi: \mathcal H(\P^1) \to \mathcal H(C)$ is a surjective morphism of degree two, branched over holomorphic Higgs fields.  An element  of the branch locus can be represented by a diagonal Higgs bundle $(L\oplus L^{-1}, \theta, {\bf p})$, with $L\in {\rm Pic}^0(C)$, 
  \begin{eqnarray*}
\theta=\left(
\begin{array}{ccc} 
\alpha & 0 \\
0 & -\alpha  \\
\end{array}
\right)
\end{eqnarray*}
$\alpha\in {\rm H}^0(C, \omega_c)$, the parabolic direction $p_1$ lies in $L$ and $p_2$ lies in $L^{-1}$ or vice-versa. 
\end{thm}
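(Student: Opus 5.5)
We will treat $\Phi$ as a finite degree two map and find its branch locus as the fixed locus of its Galois involution. We take as given the facts from the introduction: $\Phi$ is a morphism, it is compatible with the Hitchin maps via the isomorphism $\pi^*$ on quadratic differentials, and on each smooth fiber it is $\Phi_s(M)=\xi_s^*M\otimes q_r^*(2w_\infty-R)$.

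\emph{Step 1: surjectivity and degree.} Since $\pi^*:\Gamma(\omega_{\P^1}^{\otimes2}(\Lambda))\to\Gamma(\omega_C^{\otimes2}(D))$ is an isomorphism, it suffices to show that $\Phi$ maps each Hitchin fiber $\det^{-1}(s)$ onto $\det^{-1}(\pi^*s)$.
\begin{itemize}
\item For smooth $X_s$, the étale double cover $\xi_s:Y_r\to X_s$ gives $\xi_s^*:{\rm Pic}^3(X_s)\to{\rm Prym}(Y_r/C)$, up to the fixed translation by $q_r^*(2w_\infty-R)$. This map is an isogeny of $2$-dimensional abelian varieties with kernel $\{\mathcal O,\eta\}$, where $\eta$ is the $2$-torsion bundle defining $\xi_s$. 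So it is surjective and of degree two.
\item Since $\mathcal H(C)$ is irreducible of dimension four and $\Phi$ is proper, its image is closed of dimension four, hence everything. Properness follows from the fact that $\Phi$ commutes with the proper Hitchin maps and $\pi^*$ is an isomorphism.
\item The degree is two because on a generic fiber $\Phi$ is the isogeny above.
\end{itemize}

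\emph{Step 2: the Galois involution.} On smooth fibers the involution is $M\mapsto M\otimes\eta$, which has no fixed points. So $\Phi$ is unramified over the smooth locus of the Hitchin base. We then realise the involution globally as a modular operation on $\mathcal H(\P^1)$: tensoring the spectral data by $\eta$ should correspond to elementary transformations at the four points $0,1,\lambda,\infty$ together with a twist. The branch locus is the image of its fixed points, and these can only lie over the singular discriminant lines.

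For each type of singular fiber we use the description of $\mathcal H(\P^1)$ fibers from \cite{FL2} and determine when $\Phi$ fails to be locally $2{:}1$.
\begin{itemize}
\item Over the lines corresponding to a node at $w_\rho$, $\rho\in\{0,1,\lambda,\infty\}$, the spectral data on $X_s$ (with node over $\rho$) pulls back, and the translation is still free.
\item The remaining line is the one where $s$ vanishes doubly at $t$. There $r=\pi^*s$ becomes $\alpha^2$ with $\alpha\in{\rm H}^0(\omega_C)$, and $Y_r$ splits as two copies of $C$, so $\theta$ is holomorphic. On this locus a Higgs field with $\det\theta=-\alpha^2$ is diagonalisable, giving $E=L\oplus L^{-1}$ with $\theta={\rm diag}(\alpha,-\alpha)$.
\item Nilpotency of $\theta$ at $t_i$ with respect to $p_i$ forces each $p_i$ to lie in an eigenline, so $p_i\in L$ or $p_i\in L^{-1}$.
\item If both parabolic directions lie in $L$, we expect non-stability or identification with another class. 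If they lie in different lines, the object should be fixed by the involution, because the elementary transformation exchanges the two eigenlines and $\eta$ acts trivially there.
\end{itemize}

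\emph{Step 3: the fixed points.} We compute these directly. Start from a diagonal Higgs bundle on $\P^1$ with appropriate parabolic data at $0,1,\lambda,\infty,t$. Apply $\pi^*$ and the four elementary transformations at $R$, and check that the two preimages coincide exactly when the directions at $t_1,t_2$ are split between $L$ and $L^{-1}$.

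The main obstacle is Step 2: making the Galois involution explicit on singular fibers, and ruling out ramification on the nodal and nilpotent fibers. For the nilpotent cone we would argue by $\C^*$-equivariance. The branch locus is closed and $\C^*$-invariant, and it must meet every $\C^*$-orbit closure through its limits. So it suffices to check the fixed points listed in Corollary~\ref{cor:nilpunstable} and the $\P^1\times\P^1$ component, showing they have two distinct preimages except where $\alpha=0$.
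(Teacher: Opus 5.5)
Your final description of the branch locus is the right one, but the argument has a genuine gap. It sits in your treatment of the fiber over a reducible spectral curve ($r\in\Gamma(\omega_C^{\otimes 2})$), which is exactly where all the branching happens. Two of your claims there are false.

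\begin{enumerate}
\item Holomorphy of $\det\theta$ does not make $\theta$ holomorphic or diagonalizable. Apply $[\otimes\mathcal O_C(w_\infty)]\circ\elem_{t_1+t_2}$ to ${\rm diag}(\alpha,-\alpha)$, $\alpha\neq 0$, with parabolic directions off the eigenlines. The result has genuine poles at $t_1,t_2$ and the same determinant. The fiber is ${\bf G}_{hol}\cup{\bf G}_{app}$, and the generic member of ${\bf G}_{app}$ is such a field.
\item When $\theta$ is holomorphic at $t_i$ its residue vanishes, so nilpotency imposes nothing on $p_i$. On ${\bf G}_{hol}$ the directions are free (that is why it is two-dimensional), and generically they lie on neither eigenline, giving stable points.
\end{enumerate}

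So you never identify the fiber, and ``split directions are fixed, the others are not'' stays a guess. The paper works upstairs on ${\bf F}_{hol}\cup{\bf F}_{app}$ and proves two things.
\begin{itemize}
\item On ${\bf F}_{hol}$, $\elem_I$ preserves the $\P^1$-fibration and is trivial on the elliptic base. There the translating bundle $\mathcal O(-p_0-p_1-p_\lambda-p_\infty)\otimes q^*\mathcal O_{\P^1}(2)$ is trivial, since $p_0+p_1+p_\lambda\sim 3p_\infty$. Its fixed points are exactly those with $p_t\in X_\eta$, i.e.\ ${\bf F}_{hol}\cap{\bf F}_{app}$, and these become the diagonal objects with split directions.
\item On ${\bf F}_{app}\setminus{\bf F}_{hol}$ the spectral data are line bundles on the nodal curve $X_s$. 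There the same bundle is a \emph{nontrivial} $2$-torsion bundle, by Bhosle's classification of invariant bundles on nodal hyperelliptic curves, so there are no fixed points.
\end{itemize}
The ``free translation by $\eta$'' picture visibly breaks down on ${\bf F}_{hol}$, so you cannot assume it persists on ${\bf F}_{app}$, and your proposal offers nothing in its place. Likewise, ``the translation is still free'' over a node at $w_\rho$ needs the structure of that fiber. There $\elem_I$ exchanges ${\bf F}_{hol}$ and ${\bf F}_{app}$ and identifies $\sigma_0$ with $\sigma_\infty$ via the nontrivial translation by $\mathcal O(\tilde w_t-w_\rho^-)$.

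Your nilpotent-cone argument is also incomplete. Closedness and $\C^*$-invariance of the branch locus only give this: if $\lim_{\tau\to 0}\tau\cdot x$ is not a branch point, neither is $x$. It says nothing about nilpotent fields whose limit lies on $C^0$, which \emph{is} branch. Examples are the nonzero nilpotent fields over the four torsion points of $C^0$. Also, ``except where $\alpha=0$'' is misplaced: every point of the $\P^1\times\P^1$ component has $\theta=0$, and the exceptional set there is $C^0$. The paper closes this gap differently. $\elem_I$ exchanges the $16$ components $\mathcal N_i$ in pairs, and these meet only inside $\mathcal S$. Hence ramification in $\mathcal N_{\P^1}$ lies in $\mathcal S$, where it is the ramification of $\underline\Phi:\mathcal S\to\P^1\times\P^1$, with image $C^0$.

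Your Step~1 is a genuinely different and shorter route to surjectivity than the paper's fiber-by-fiber check, and the properness part is correct. Indeed $\det\circ\Phi=\pi^*\circ\det$, the source Hitchin map is proper, and the target is separated. But the argument needs $\mathcal H(C)$ to be irreducible, which the paper asserts only for $\mathcal H(\P^1)$. You would have to prove it, for instance in two steps:
\begin{itemize}
\item show the smooth four-dimensional stable locus is irreducible;
\item show each strictly semistable class is a limit of stable diagonal holomorphic fields, as a parabolic direction slides into an eigenline.
\end{itemize}
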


The surjectivity in Theorem~\ref{thmintro:branchofphi} is subtle and follows from the preceding analysis of $\Phi$ on each Hitchin fiber. Theorem~\ref{thmintro:branchofphi} yields a characterization of the singular locus of $\mathcal H(C)$ in terms of the Hitchin fibration (Corollary~\ref{cor:sigH}): 

\begin{cor}
The singular locus of $\mathcal H(C)$ is irreducible of codimension two. A generic  element of it consists of a singular point of a Hitchin fiber $\det^{-1}(r)$ whose spectral curve $Y_r$  is reducible. 
\end{cor}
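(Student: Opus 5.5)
We derive the Corollary from Theorem~\ref{thmintro:branchofphi}, working with the local structure of $\Phi$. The space $\mathcal H(\P^1)$ is a moduli space of parabolic Higgs bundles with five poles on $\P^1$ and generic weights. We take from \cite{FL2} that it is smooth (a smooth symplectic fourfold). Since $\Phi:\mathcal H(\P^1)\to\mathcal H(C)$ is a finite surjective morphism of degree two, $\mathcal H(C)$ is locally the quotient of the smooth space $\mathcal H(\P^1)$ by the Galois involution $\iota$. Hence $\mathcal H(C)$ is smooth away from the branch locus. At a point of the branch locus it is locally $\C^4/\iota$, where $\iota$ is linearized at a fixed point.

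\textbf{Step 1: the codimension of the fixed locus.} We claim the fixed locus of $\iota$ has codimension two. By Theorem~\ref{thmintro:branchofphi}, the branch locus is parametrized by triples $(L,\alpha,\text{choice of parabolic directions})$, with $L\in{\rm Pic}^0(C)$ and $\alpha\in{\rm H}^0(\omega_C)\simeq\C$. This family is two-dimensional, and $(L,\alpha)$ and $(L^{-1},-\alpha)$ give the same point. Exchanging the roles of $p_1,p_2$ amounts to replacing $L$ by $L^{-1}$, so the choices collapse to a single family. The parameter space is therefore the irreducible surface $({\rm Pic}^0(C)\times\C)/\pm$, roughly the total space of a line bundle over $\P^1$. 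It has codimension two in the fourfold $\mathcal H(C)$.

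\textbf{Step 2: singularity along the branch locus.} We need $\iota$ to be symplectic, which should follow from $\Phi$ being compatible with the natural symplectic forms, since the pullback followed by elementary transformations preserves the Poisson structure. Granting this, the linearization of $\iota$ is a symplectic involution whose fixed space has dimension $2$. It therefore acts as $-1$ on a complementary $2$-plane, so locally $\mathcal H(C)\simeq\C^2\times(\C^2/\pm1)$, which is an $A_1$ singularity along a surface. So every point of the branch locus is singular. This also matches the non-reflection criterion: because $\iota$ has codimension-two fixed locus, the quotient is singular exactly there. The singular locus therefore equals the branch locus, and it is irreducible of codimension two.

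\textbf{Step 3: the generic element.} For generic $(L,\alpha)$ with $\alpha\neq0$, the determinant is $-\alpha^2$. The spectral curve $y^2=\alpha^2$ is then $Y_r=\{y=\alpha\}\cup\{y=-\alpha\}$, two copies of $C$ meeting over $D$ (where $r$ vanishes). This is case~\eqref{3intro} of Theorem~\ref{thm:mainintro}. The corresponding Higgs bundle is a direct sum of rank-one sheaves supported on the two components, so it is a singular point of the fiber: it lies on the intersection of the two components described in Theorem~\ref{thm:CP1}. When $\alpha=0$ the point lies instead in the nilpotent cone, a proper closed subset of the branch surface.

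The main obstacle is Step 2, namely checking that $\iota$ is not a reflection at its fixed points, equivalently that $\Phi$ is genuinely branched in codimension two rather than along a divisor. I would first try to deduce this from the symplectic nature of $\iota$. Failing that, I would verify it directly on a generic fiber of type~\eqref{3intro}, using the explicit description of $\Phi$ there from Proposition~\ref{prop:fimGhol}: the involution has two fixed points on each fiber $\P^1$, and it also acts on the base directions.
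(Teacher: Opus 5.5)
Your overall strategy matches the paper's. You read off the branch locus from the theorem describing it (diagonal $\theta={\rm diag}(\alpha,-\alpha)$ on $L\oplus L^{-1}$, with $p_1,p_2$ in different eigenlines), observe that it is an irreducible surface, and note that for $\alpha\neq0$ the point lies over the reducible spectral curve of $r=-\alpha^2$, on ${\bf G}_{hol}\cap{\bf G}_{app}$. Your Step~3 is exactly the last sentence of the paper's proof.

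\textbf{Step 1 contains a real mistake.} The equality of $(L,\alpha)$ and $(L^{-1},-\alpha)$ holds only if you also swap which parabolic direction lies in the first factor. Concretely, $L\oplus L^{-1}$ with $\theta|_L=\alpha$, $p_1\subset L$, $p_2\subset L^{-1}$ is the same object as $L^{-1}\oplus L$ with $\theta|_{L^{-1}}=-\alpha$, $p_2\subset L^{-1}$, $p_1\subset L$. That is precisely the identification you already used to absorb the ``vice-versa'' family, so dividing once more by $\pm$ counts it twice. Once you normalize $p_1\subset L$, the data $(L,\alpha)$ and $(L^{-1},-\alpha)$ give non-isomorphic polystable objects:
\begin{itemize}
\item for $\alpha\neq0$, $p_1$ lies in the $\alpha$-eigenline in one and in the $(-\alpha)$-eigenline in the other;
\item for $\alpha=0$, they are the points of $C^0$ attached to $L$ and to $L^{-1}$, which are distinct unless $L^2=\mathcal O_C$.
\end{itemize}
So the branch locus is parametrized bijectively by ${\rm Pic}^0(C)\times{\rm H}^0(C,\omega_C)$. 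This is the paper's description: forgetting $\theta$ gives ${\rm Sing}(\mathcal H(C))\to C^0$ with fibers ${\rm H}^0(C,\omega_C)$. The base is the elliptic curve $C^0\simeq{\rm Pic}^0(C)$ (via the representative $L\oplus L^{-1}$, $p_{t_1}\subset L$, $p_{t_2}\subset L^{-1}$), not $\P^1$. Irreducibility and dimension two are unaffected, so your conclusion survives.

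\textbf{Step 2 is where you go beyond the paper.} The paper simply identifies ${\rm Sing}(\mathcal H(C))$ with the branch locus inside the proof of the branch-locus theorem, without any local analysis. Two comments.

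First, you do not need $\iota$ to be symplectic. $\Phi$ is injective on ${\rm Fix}(\iota)$, since its fibers are $\iota$-orbits. By Step~1, every component of the smooth fixed locus therefore has dimension at most two. Hence the $(-1)$-eigenspace of $d\iota$ at each fixed point has dimension at least two, and $\C^k/\pm1$ is singular for $k\ge2$. This already excludes reflections.

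Second, what you do need, and leave hidden in ``locally the quotient'', is that $\Phi$ is finite with fibers the $\iota$-orbits and that $\mathcal H(C)$ is normal. Only then is the finite birational bijection $\mathcal H(\P^1)/\iota\to\mathcal H(C)$ an isomorphism (Zariski's Main Theorem), and only then is $\mathcal H(C)$ smooth where $\Phi$ is unramified. Without normality, $\mathcal H(C)$ could have non-normal singular points away from the branch locus. The paper relies on this tacitly as well, so you should state it explicitly or cite normality of the moduli space. With that and the corrected parametrization, your argument is a more complete version of the paper's.
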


This paper is organized as follows. Section~\ref{section:basic} reviews the notions of parabolic vector bundles, twist and elementary transformations. In Section~\ref{section:modPhi}, we construct the modular map $\Phi$ from Higgs bundles over the Riemann five punctured sphere to Higgs bundles over the twice punctured elliptic curve.  We also take the opportunity to recap, in Section~\ref{section:recap}, some results of \cite{FL2} which are useful for our purposes.    Section~\ref{section:uns} is devoted to describe Higgs bundles in $\mathcal H(C)$ which admit unstable underlying parabolic vector bundle. In particular, we describe fixed points with respect to the $\C^*$ action (multiplication on Higgs fields) outside the component of the nilpotent cone corresponding to vanishing Higgs fields.  
In Section~\ref{section:stric}, we study the locus of strictly semistable parabolic vector bundles and give explicit equations for Hitchin Hamiltonians over the twice punctured elliptic curve. In Section~\ref{section:nilpotentcone}, we describe the nilpotent cone in $\mathcal H(C)$. Section~\ref{spechyper} is devoted to the remain singular fibers of the Hitchin map. In Section~\ref{section:branch}, we describe the branch locus of $\Phi$ and then the singular locus of $\mathcal H(C)$.

\section{Basic definitions}\label{section:basic}

Let  $C$ be  the complex elliptic curve corresponding to the  $2:1$ cover  $\pi:C\to \P^1$ branched over $\{0,1,\lambda, \infty\}$. Let $t\in\P^1$ be an extra point and let $\Lambda = 0+1+\lambda+t+\infty$ denote the divisor on $\P^1$ defined by these five points. This point gives rise to a divisor $D=t_1+t_2$ with $\pi(t_1)=\pi(t_2)=t$.  For computation, we can assume that $C\subset \mathbb P^2$  is the projective cubic curve
\begin{eqnarray*}\label{ellipticbis}
zy^2 = x(x-z)(x-\lambda z)
\end{eqnarray*}
$t_1=(t,u)$ and  $t_2=(t,-u)$.

Let us denote by $w_{\infty} = (0:1:0) \in C$ the identity  with respect to the group structure, 
\begin{eqnarray}\label{torsionpoints}
w_{0}=(0:0:1),\ w_1=(1:0:1),\ w_\lambda=(\lambda:0:1)
\end{eqnarray}
the $2$-torsion points and let   
\[
R=w_{0}+w_{1}+w_{\lambda}+w_{\infty}
\]
denote the ramification divisor of $\pi$.

\subsection{Higgs bundles}
Given a compact Riemann surface $X$ and a reduced divisor $T = t_1+\cdots + t_n$ on it,  
we shall consider  moduli spaces $\mathcal H_{\boldsymbol\mu}(X,T)$ of $\SL(\mathbb C)$ {\it parabolic Higgs bundles} over $(X, T)$. Points of the support of $F$ are called {\it parabolic points}.  The existence of these moduli spaces follows from \cite{Yo93, Ni, Ko, Yo95, Na}. 
An element of $\mathcal H_{\boldsymbol\mu}(X,T)$ consists  of a $\boldsymbol\mu$-semistable $S$-equivalence class $(E, {\bf p}, \theta)$, called {\it parabolic Higgs bundle}, where
\begin{itemize}
\item $E$ is a rank two holomorphic vector bundle on $X$ with trivial determinant line bundle. 
\item ${\bf p} = \{p_i\}_{i=1}^n$ is a collection of one dimensional subspaces $p_i\subset E_{t_i}$ on the fiber over  each parabolic point, called {\it parabolic directions}. The pair $(E, {\bf p})$ is called {\it parabolic vector bundle}. 
\item $\theta: E\to E\otimes\omega_X(T)$ is a traceless homomorphism which is nilpotent with respect to the parabolic directions, meaning that the residual endomorphisms  ${\rm Res}(\theta, t_i): E_{t_i} \to E_{t_i}$ satisfy ${\rm Res}(\theta, t_i)\cdot p_i = 0$. This homomorphism is called a {\it Higgs field}. 
\end{itemize}
Notice that $\theta$ being traceless and nilpotent then  ${\rm Res}(\theta, t_i)\cdot E_{t_i}\subset p_i$. The subscript $\boldsymbol\mu$ refers to a weight vector $\boldsymbol\mu=(\mu_1, \dots, \mu_n)$ of real numbers $0\le \mu_i \le 1$ which give a notion of stability: $(E, {\bf p}, \theta)$ is said to be $\boldsymbol\mu$-semistable (respectively $\boldsymbol\mu$-stable) if 
\[
{\rm Stab}_{\boldsymbol\mu}(L) := \deg E - 2\deg L - \sum_{p_i = L|_i} \mu_i + \sum_{p_i\neq L|_i} \mu_i \ge 0
\] 
(respectively ${\rm Stab}_{\boldsymbol\mu}(L)>0$) for any line subbundle $L\subset E$ invariant under $\theta$. 

We briefly recall the notion of $S$-equivalence. Any parabolic Higgs bundle $(E, {\bf p}, \theta)$ admits a unique associated graded bundle ${\rm gr}(E, {\bf p}, \theta)$:  either it is $\boldsymbol\mu$-stable and we put 
\[
{\rm gr}(E, {\bf p}, \theta) := (E, {\bf p}, \theta) 
\]
or there exists an  invariant line subbundle $L\subset E$ with ${\rm Stab}_{\boldsymbol\mu}(L)=0$ (in case it is $\boldsymbol\mu$-semistable) and
\[
{\rm gr}(E, {\bf p}, \theta):= (L\oplus E/L,{\bf p}_{L\oplus E/L},  \theta_L\oplus \theta_{E/L}).
\]
The parabolic direction of ${\bf p}_{L\oplus E/L}$ at $t_i$ coincides with $L_{t_i}$ if $p_i\subset L$ and with $(E/L)_{t_i}$ otherwise. In addition, $\theta_L = \theta|_L$ and $\theta_{E/L}:E/L\to E/L\otimes\omega_X(T)$ is the homomorphism induced by $\theta$. Finally,  two parabolic Higgs bundles are $S$-equivalent if  they have isomorphic associated graded parabolic Higgs bundles.

Closely linked, is the moduli space $\mathcal B_{\boldsymbol\mu}(X, T)$ of parabolic vector bundles, consisting of $S$-equivalence class of $\boldsymbol\mu$-semistable parabolic vector bundles $(E, {\bf p})$, having trivial determinant line bundle. 
The parabolic bundle $(E, {\bf p})$ (without $\theta$) is said $\boldsymbol\mu$-semistable if ${\rm Stab}_{\boldsymbol\mu}(L)\ge 0$ for any line subbundle $L\subset E$.

Analogously, we define moduli spaces $\mathcal H_{\boldsymbol\mu}(X,T,L)$ and $\mathcal B_{\boldsymbol\mu}(X,T,L)$ of parabolic Higgs bundles and parabolic vector bundles, with fixed determinant line bundle $L$. In particular, 
$\mathcal H_{\boldsymbol\mu}(X,T)=\mathcal H_{\boldsymbol\mu}(X,T,\mathcal O_X)$ and  $\mathcal B_{\boldsymbol\mu}(X,T) = \mathcal B_{\boldsymbol\mu}(X,T,\mathcal O_X)$.

We shall consider two specific moduli spaces of $\SL(\C)$ parabolic Higgs bundles. The first is the moduli space over $(\P^1, \Lambda)$
\[
\mathcal H(\P^1) = \mathcal H_{\boldsymbol\mu_c}(\P^1, \Lambda)
\]
with respect to the central weight $\boldsymbol\mu_c = (\frac{1}{2}, \dots, \frac{1}{2})$. The second is the moduli space 
\[
\mathcal H(C) = \mathcal H_{{\bf a}_c}(C, D)
\]
over $(C, D)$,  considering the central weight ${\bf a}_c = (\frac{1}{2}, \frac{1}{2})$. Both have dimension four. In the former case, the weight vector lies in the interior of a chamber and then any Higgs bundle in $\mathcal H(\P^1)$ is $\boldsymbol\mu_c$-stable. It is a smooth irreducible quasiprojective variety. In the second case, the weight vector lies in a wall and $\mathcal H(C)$ is a singular quasiprojective variety. Here, the $S$-equivalence plays an important role. The central weights have been chosen in order to have full group of symmetries given by elementary transformations.  We shall consider a modular map between them, which will be described in the next section. 

\subsection{Twist and elementary transformations}

Let $(E, {\bf p})$ be a parabolic vector bundle on $(X, T)$ and let us fix a line bundle $L_0$ on $X$. 
Since $\mathbb P(E) \simeq \mathbb P(E\otimes L_0)$, each parabolic direction $p_i\subset E_{t_i}$ induces a direction $p_i\otimes L_0\subset (E\otimes L_0)_{t_i}$ such that $\mathbb P (p_i\otimes L_0) = \mathbb Pp_i \in \mathbb PE$
and thus we can see $(E\otimes L_0, {\bf p}\otimes L_0)$ as a parabolic vector bundle. Notice that 
\[
\det (E\otimes L_0) = (\det E)\otimes L_0^2,
\]
and  the stability condition is preserved. Thus one obtains a map 
\[
[\otimes L_0]: \mathcal B_{\boldsymbol\mu}(X, T, L) \to \mathcal B_{\boldsymbol\mu}(X, T, L\otimes L_0^2)
\] 
between moduli spaces of parabolic vector bundles. Since a homomorphism $\theta: E\to E\otimes \omega_X(T)$ induces a homomorphism $\theta\otimes L_0: E\otimes L_0\to (E\otimes L_0)\otimes \omega_X(T)$, this process also induces a modular map at the level of Higgs bundles
\[
[\otimes L_0]: \mathcal H_{\boldsymbol\mu}(X, T, L) \to \mathcal H_{\boldsymbol\mu}(X, T, L\otimes L_0^2).
\]

We now describe another correspondence between parabolic vector bundles, which is called elementary transformation. Given a subset $I\subset \{1,\dots, n\}$  we consider the following exact sequence of sheaves, where we still denote by $E$ the sheaf of sections of $E$ by abuse of notations
$$
0\ \rightarrow\ E' \ \stackrel{\alpha}{\to} \  E\ \stackrel{\beta}{\rightarrow}\ \bigoplus_{i\in I}  (E\vert_{t_i})/p_i \ \rightarrow\ 0 \ .
$$
The map $\beta$ sends $s$ to $\oplus_{i\in I} s(p_i)$
and then $E'$ corresponds to the subsheaf of sections of $E$ passing through parabolic directions. If $E$ is locally generated by $e_1,e_2$ as $\mathcal O_X$-module  near $t_i$ with $e_1(t_i)\in p_i$, then $E'$ is generated by $e_1, xe_2$, where $x$ is a local parameter. Hence $E'$ is locally free of rank two. We can see $E'$ as a parabolic vector bundle $(E',{\bf p'})$, where $p_i' := {\rm ker}\alpha_{t_i}$. This gives a correspondence  
\[
\elem_{D_I}: (E,{\bf p}) \mapsto (E',{\bf p'}).
\]
where $D_I=\sum_{i\in I}t_i$.  The determinant is changed after this process
\[
\det E' = \det E \otimes \mathcal O_{X}\big(-D_I\big)
\]
and the following properties are well known \cite{Ma82}
\begin{itemize}
\item\label{prop circ} $\elem_{t_i}\circ \elem_{t_i} = [\otimes\mathcal O_X(-t_i)]$\;;
\item\label{prop comut} $\elem_{t_i}\circ \elem_{t_j} = \elem_{t_j}\circ \elem_{t_i}$\;;
\item\label{prop L} If $L\subset E$ does not contain $p_i\subset E_{t_i}$ then its transformed $L' \subset E'$ via $\elem_{t_i}$ consists of the line bundle  $L'\simeq L\otimes \mathcal O_X(-t_i)$.   If  $L\subset E$ contains $p_i\subset E_{t_i}$ then it does not change, i.e., $L' \simeq L$\;;
\item\label{prop twist} $\elem_{t_i}(E\otimes L, {\bf p}\otimes L) = \big(\elem_{t_i}(E,{\bf p})\big)\otimes L$.
\end{itemize}
We note that $elem_{D_I}$ corresponds to $elem^-_{D_I}$ in some litterature.

In order to maintain the stability condition, we need to  a suitable 
modification of weights,  if $(E,{\bf p})$ is $\boldsymbol\mu$-semistable then $(E',{\bf p'})$ is $\boldsymbol\mu'$-semistable with $\boldsymbol\mu'_i=1-\mu_i$ if $i\in I$ and $\mu'_i = \mu_i$ otherwise. In particular, if $\boldsymbol\mu$ is the central weight vector $(\frac{1}{2}, \dots, \frac{1}{2})$, then $\boldsymbol\mu' = \boldsymbol\mu$. 
Assuming that $I\subset \{1,\dots, n\}$ has  even cardinality and  choosing  a square root $L_I$ of $\mathcal O_X(D_I)$,  we obtain an isomorphism between moduli spaces 
\begin{eqnarray*}
\elem_{D_I,L_I}:\mathcal B_{\boldsymbol\mu}(X,T) &\to & \mathcal B_{\boldsymbol\mu'}(X,T)\\
  (E,{\bf p}) &\mapsto & (E', {\bf p}') \otimes L_I. 
\end{eqnarray*}
which is called  an {\it elementary transformation} over $I$. 

We can also perform an elementary transformation on a Higgs bundle $( E, {\bf p}, \theta)$. Indeed, since $\theta$ is nilpotent with 
respect to  ${\bf p}$, then $\theta(E')\subset E'\otimes \omega_X(T)$ and therefore the  restriction $\theta|_{E'}$ induces a homomorphism 
\[
\theta': E' \to E'\otimes \omega_X(T)
\]
which is nilpotent  with respect to the direction ${\bf p'}$ of $E'$. Consequently, it induces a modular map between Higgs bundles
\begin{eqnarray*}
\elem_{D_I,L_I}:\mathcal H_{\boldsymbol\mu}(X,T) &\to & \mathcal H_{\boldsymbol\mu'}(X,T)\\
  (E,{\bf p}, \theta) &\mapsto & (E', {\bf p}', \theta')\otimes L_I . 
\end{eqnarray*}

\section{The modular map}\label{section:modPhi}

We keep the notation of the previous section. 

\subsection{The modular degree two covering}\label{section:modularcovering}

Using the elliptic cover $\pi : C\to \P^1$, we may define a rational map between moduli spaces of $\SL(\C)$ parabolic Higgs bundles $\Phi: \mathcal H(\P^1) \dashrightarrow \mathcal H(C)$, as follows. Given $(E, {\bf p}, \theta)\in \mathcal H(\P^1)$, first consider its  pullback $\pi^*(E, {\bf p}, \theta)$ to $C$, then perform an elementary transformation $\elem_R$ in order to eliminate singularities over the ramification divisor $R$ of $\pi$.   Since $R$ is linearly equivalent to $4w_{\infty}$, twisting  by $\mathcal O_{C}(2w_{\infty})$ we get the desired  map
  \begin{eqnarray*}
 \Phi: \mathcal H(\P^1) &\dashrightarrow & \mathcal H(C) \\
(E, {\bf p}, \theta) &\mapsto& \elem_{R, \mathcal O_{C}(2w_{\infty})} \circ\pi^*(E, {\bf p}, \theta). 
\end{eqnarray*}

Let us  note that the following diagram commute
 \begin{eqnarray}\label{dia:detsmaps}
\xymatrix { 
 \mathcal H(\P^1) \ar@{->}[d]_{\det} \ar@{-->}[r]^{\Phi}  &  \mathcal H(C) \ar@{->}[d]^{\det} \\
\C^2 \ar@{->}[r]^{\pi^*}    &        \C^2
}
\end{eqnarray}
where the two dimensional basis correspond to the spaces of quadratic differentials $\Gamma(\omega_{\P^1}^{\otimes 2}(\Lambda))$ and $\Gamma(\omega_{C}^{\otimes 2}(D))$, respectively.  Hence $\Phi$ preserves  fibers of the Hitchin maps $\det$ in both side.  

\begin{thm}\label{thm:morphismPhi_0}
The map $\Phi$ is a morphism of degree two. 
\end{thm}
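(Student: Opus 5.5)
The plan is to prove the statement in three steps. First, the composite $\elem_{R,\mathcal O_C(2w_\infty)}\circ\pi^*$ sends any $(E,{\bf p},\theta)$ in $\mathcal H(\P^1)$ to a parabolic Higgs bundle on $(C,D)$ with trivial determinant. Second, the result is ${\bf a}_c$-semistable, so that $\Phi$ is everywhere defined and algebraic. Third, the degree of $\Phi$ is computed on a generic Hitchin fiber.

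\emph{Step 1 (well defined on objects).} Let $w\in R$ lie over a branch point $b$, with local coordinates $x$ at $b$ and $z$ at $w$, so that $x=z^2$. Then $dx/x=2\,dz/z$. Choose a local frame $e_1,e_2$ of $E$ with $e_1(b)\in p_b$ and residue $N$ satisfying $Ne_2=e_1$, $Ne_1=0$. Then
\[
\pi^*\theta=\bigl(N+z^2A(z^2)\bigr)\,2\,dz/z .
\]
The elementary transformation $\elem_w$ replaces the frame by $e_1,\ ze_2$. A direct computation gives $\theta(ze_2)=2e_1\,dz+O(z)$ and $\theta(e_1)=O(z)\,dz$. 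Hence the transformed field $\theta'$ is holomorphic at every point of $R$, and the auxiliary parabolic structure over $R$ can be forgotten. Over $t_1,t_2$ the pullback keeps nilpotent residues with respect to the pulled-back directions. The determinant changes as
\[
\det=\mathcal O(-R)\otimes\mathcal O(2w_\infty)^{\otimes 2}\simeq\mathcal O_C,
\]
since $R\sim 4w_\infty$. Since $\det\theta$ is unchanged by these operations up to $\pi^*$, the diagram~(\ref{dia:detsmaps}) commutes.

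\emph{Step 2 (semistability, hence a morphism).} Let $L'\subset E'$ be a $\theta'$-invariant line subbundle. The covering involution $\iota$ of $\pi$ lifts to $\pi^*E$ and to $E'$, commuting with $\theta'$.

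If $\iota^*L'=L'$, then $L'$ descends to a $\theta$-invariant $L\subset E$ on $\P^1$. I will compare the stability indices directly:
\begin{itemize}
\item the degree doubles under $\pi^*$;
\item $\elem_R$ shifts the degree by $-\#\{w\in R : p_w\not\subset L\}$;
\item the twist by $\mathcal O_C(2w_\infty)$ adds $2$.
\end{itemize}
Combining these, I expect ${\rm Stab}_{{\bf a}_c}(L')=2\,{\rm Stab}_{\boldsymbol\mu_c}(L)>0$.

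If $\iota^*L'\neq L'$, then $L'\oplus\iota^*L'\to E'$ is generically injective and both summands are invariant. Bookkeeping the parabolic directions at $t_1,t_2$, which are exchanged by $\iota$, gives ${\rm Stab}(L')+{\rm Stab}(\iota^*L')\ge 0$. Since the two terms are equal, ${\rm Stab}(L')\ge0$. Only semistability can be expected here, which is consistent with ${\bf a}_c$ lying on a wall.

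Since the whole construction works in families, the universal family over the smooth variety $\mathcal H(\P^1)$, which exists at least étale locally because all points are stable, yields a family of semistable objects on $C$. The coarse moduli property then makes $\Phi$ a morphism.

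\emph{Step 3 (degree two).} The map $\pi^*$ between the two-dimensional Hitchin bases is an isomorphism, and $\Phi$ covers it. Therefore the degree of $\Phi$ equals the degree of its restriction $\Phi_s$ to a generic fiber. By the result of \cite{FL1} recalled in the introduction,
\[
\Phi_s(M)=\xi_s^*M\otimes q_r^*(2w_\infty-R),
\]
where $\xi_s\colon Y_r\to X_s$ is étale of degree $2$. The pullback $\xi_s^*\colon{\rm Pic}^3(X_s)\to{\rm Prym}(Y_r/C)$ is a homomorphism between abelian varieties of the same dimension $2$. Its kernel is the group of order two generated by the $2$-torsion line bundle defining $\xi_s$. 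Hence $\Phi_s$ is an isogeny of degree two, and $\Phi$ is generically $2{:}1$.

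The main obstacle I expect is Step 2. It requires careful handling of $S$-equivalence classes, of the non-invariant destabilizing subbundles, and of the claim that $\Phi$ is a genuine morphism rather than merely a map of sets. I would first settle the degree computation for $\iota$-invariant subbundles, and then treat the non-invariant case through the pair $L'$, $\iota^*L'$.
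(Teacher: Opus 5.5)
Your proof is correct, but for the key point (no indeterminacy) it takes a different route from the paper.

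The paper quotes the degree two directly from \cite[Corollary 5.2]{FL1}. It proves semistability of the image in Proposition~\ref{prop:welldefined} by contradiction. An unstable image would have holomorphic determinant by \cite[Propositions 3.1 and 3.2]{FL1}. Its underlying parabolic bundle would also be unstable, so, since $\Phi$ is already a morphism on parabolic bundles \cite[Theorem 6.4]{Nestor}, $(E,{\bf p})$ would be $\boldsymbol\mu_c$-unstable. The classification of such Higgs bundles in \cite{FL2} then exhibits a pole of $\det\theta$ at $t$, a contradiction.

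You instead use the canonical lift of the covering involution $\iota$ to $\elem_R(\pi^*E)\otimes\mathcal O_C(2w_\infty)$. This lift exists because $R$, $2w_\infty$ and the pulled-back directions over $R$ are all $\iota$-invariant.

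\begin{itemize}
\item \emph{Invariant case.} The identity you only ``expect'' does hold. If $k$ is the number of parabolic directions of $(E,{\bf p})$ contained in $L$, then ${\rm Stab}_{{\bf a}_c}(L')$ and $2\,{\rm Stab}_{\boldsymbol\mu_c}(L)$ both equal $5-4\deg L-2k$. This is an odd integer, hence at least $1$.
\item \emph{Non-invariant case.} A direction $p_{t_i}$ lying in both $L'$ and $\iota^*L'$ contributes $-1$ to the parabolic terms of ${\rm Stab}(L')+{\rm Stab}(\iota^*L')$. But it forces $L'$ and $\iota^*L'$ to meet over $t_i$, which contributes at least $+2$ through $-2(\deg L'+\deg\iota^*L')$. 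So the sum is nonnegative, and ${\rm Stab}(L')\ge 0$.
\end{itemize}

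What each approach buys: your argument is self-contained and uniform. It needs neither the holomorphic-determinant step, nor \cite{Nestor}, nor the case-by-case classification from \cite{FL2}. It also shows that strict semistability in the image can only come from a pair of invariant subbundles exchanged by $\iota$. That anticipates the diagonal branch locus of Theorem~\ref{thm:ramiC}. The paper's route is shorter given its references, and it fits with the analysis of unstable parabolic bundles in Section~\ref{section:uns}.

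For the degree, both arguments rest on the same formula from \cite{FL1}; you simply make explicit that $\ker\xi_s^*$ has order two, so the generic-fiber map is a degree-two isogeny.
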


\proof
It follows from \cite[Corollary 5.2]{FL1} that $\Phi$ is a rational map of degree two. At the end of Section~\ref{section:uns}, we will show that it does not have indetermination points, see Proposition~\ref{prop:welldefined}.  
\endproof

\subsection{Involution of the covering} 

The moduli space $\mathcal H(\P^1)$ has been recently studied in \cite{FL2}, as well as all fibers of the Hitchin map 
\[
\det: \mathcal H(\P^1)\to \Gamma(\omega_{\P^1}^{\otimes 2}(\Lambda))\simeq \C^2.
\]  
For  general $s\in \Gamma(\omega_{\P^1}^{\otimes 2}(\Lambda))$, the spectral curve $X_s$ is a smooth curve of genus two, admitting a 2:1 map $q_s:X_s\to \P^1$ branched over six points  $0,1,\lambda, t, \infty, \rho$ of $\P^1$ and the fiber of $\det$ over $s$ is isomorphic to the Picard variety ${\rm Pic}^{3}(X_s)$, which parametrizes degree 3 line bundles on $X_s$. On the one hand, any element $(E, {\bf p}, \theta)$ of $\det^{-1}(s)$ gives rise to a curve of eigenvectors $X_{\theta}\subset \P E$ which is actually isomorphic to $X_s$. The 2:1 covering $X_{\theta}\to \P^1$ is branched over  $0,1,\lambda, t, \infty, \rho$ and contains parabolic directions over $0,1,\lambda, t, \infty$ as ramification points.  On the other hand, the element $(E, {\bf p}, \theta)$ corresponds to a line bundle $M_{\theta}\in{\rm Pic}^{3}(X_s)$, via BNR correspondence. 

Taking a subset $I\subset \{0,1,\lambda, t, \infty\}$ of even cardinality and a square root $L_I$ of the corresponding 
 line bundle $\mathcal O_{\P^1}(D_I)$, we may consider the elementary transformation $\elem_{D_I, L_I}$ (centered at parabolic directions) of the previous section. Since it preserves the fiber $\det^{-1}(s)$, it induces a map ${\rm Pic}^{3}(X_s) \to {\rm Pic}^{3}(X_s)$ which corresponds to a translation of the Abelian variety, see \cite[Proposition 2.3]{FL1}. In the next section, we will define a degree two covering  map $\Phi: \mathcal H(\P^1) \to \mathcal H(C)$. For $I=\{0,1,\lambda, \infty\}$ and $L_I = \mathcal O_{\P^1}(2)$, the map \[
\elem_{0,1,\lambda, \infty} := \elem_{I, D_I}\quad; \quad D_I=0+1+\lambda+\infty
\]
coincides with the involution of this covering. More precisely, if we denote by $x_i$, $i=0, 1, \lambda, \infty$, those points of $X_s$ with $q_s(x_i) = i$ then the restriction of $\elem_{0,1,\lambda, \infty}$  to a general fiber ${\rm Pic}^{3}(X_s)$ consists of the translation
\begin{eqnarray*}
\elem_{0,1,\lambda, \infty}: {\rm Pic}^{3}(X_s) &\to& {\rm Pic}^{3}(X_s)\\ 
             M &\mapsto& M \otimes \mathcal O_{X_s}(-\sum x_i)\otimes q_s^*(\mathcal O_{\P^1}(2))
\end{eqnarray*}
see \cite[Propositions 2.3 and 2.6]{FL1}. 

\subsection{Recap: Higgs bundles on the five punctured sphere}\label{section:recap}
The restriction of $\Phi$ to a general fiber $\det^{-1}(s)$ has been described in \cite{FL1}, and the studying of singular fibers of the Hitchin map $\det: \mathcal H(\P^1)\to \C^2$ has been completed in \cite{FL2}. Here, we want to complete this picture by extending the description of $\Phi$ to the whole moduli space. So, we proceed by recalling some results of \cite{FL2} for the convenience to the reader.

The locus in $\Gamma(\omega_{\P^1}^{\otimes 2}(\Lambda))$ of singular spectral curves is a union of five lines: apart from $s=0$, each line parametrizes  spectral curves $X_s$  having a nodal point over $\rho$, for each $\rho\in\{0, 1, \lambda, t, \infty\}$.
Each singular curve $X_s$ has genus $2$, and its  desingularization $\tilde{X_s}$ is an elliptic curve branched over 
\[
\{0,1,\lambda, t, \infty\}\setminus\{\rho\}. 
\]
So,  $X_s$ can be obtained identifying two points $w_{\rho}^+$ and $w_{\rho}^-$ of $\tilde{X_s}$.  The spectral curve over $s=0$ is non-reduced. 

The nilpotent cone $\mathcal N_{\P^1}$, i.e. the fiber of $\det$ over $s=0$, contains the moduli space 
\[
\mathcal S:=\mathcal B_{\boldsymbol\mu_c}(\P^1, \Lambda) 
\]
of parabolic vector bundles, by taking the Higgs field to be zero. This is a del Pezzo surface of degree four, and a modular description of its $16$ (-1)-self intersection rational curves is well known, for instance see \cite[Table 2]{FL2}. Let us denote by $l_i$ those rational curves.

The following is a summary of some results of \cite{FL2}, useful for our purposes:

\begin{itemize}
\item (\cite[Theorem 4.5]{FL2}) The nilpotent cone $\mathcal N_{\P^1}$ has $17$ irreducible components $\mathcal N_{\P^1} = \mathcal S \cup_{i=1}^{16}\mathcal N_i$. Each component $\mathcal N_i$ is contracted to the rational curve $l_i\subset \mathcal S$, via the forgetful  map $\frak{for}$ which forgets the Higgs field. 
\item (\cite[Proposition 4.1]{FL2}) There are exactly $16$ Hodge bundles, i.e. fixed points $\Theta_i$ for the $\C^*$-action, away from $\mathcal S$. Each $\Theta_i$ belongs to $\mathcal N_i$, it consists of the unique Higgs bundle in $\mathcal N_i$ which admits a $\boldsymbol\mu_c$-unstable parabolic vector bundle. This last is decomposable as parabolic vector bundle, see \cite[Figure 1]{FL2}.  
\item (\cite[Theorem 5.4]{FL2}) Every singular Hitchin fiber over a nonzero $s$, whose spectral curve $X_s$ is nodal over $\rho$,  has two components
\[
{\det}^{-1} (s) = {\bf F}_{hol}\cup{\bf F}_{app}. 
\]
While ${\bf F}_{hol}$ consists of Higgs bundles which are holomorphic at $\rho$, ${\bf F}_{app}$ is formed by those which are apparent with respect to the parabolic direction over $\rho$. The isomorphism $\elem_{0,1,\lambda,\infty}$   swaps them, so ${\bf F}_{hol}$ and ${\bf F}_{app}$ are isomorphic. 
\item (\cite[Theorem 5.4 - (1)]{FL2}) ${\bf F}_{hol}$  is (and so is ${\bf F}_{app}$) a desingularization of the compactified Picard variety $\overline{\rm Pic}^3(X_s)$ of the nodal curve $X_s$.  It is isomorphic to a $\P^1$-bundle 
\[
{\bf F} = \P(\mathcal O_{\tilde{X_s}}(w_{\rho}^+)\oplus\mathcal O_{\tilde{X_s}}(w_{\rho}^-))
\]
over the elliptic curve $\tilde{X_s}$. The $\P^1$-fibration in ${\bf F}_{hol}$  is given by the forgetful map $\frak{for}:{\bf F}_{hol} \to \tilde{X_s}$. 
\item (\cite[Proposition 3.3]{FL2}) The locus of Higgs bundles in $\mathcal H(\P^1)$ which admit a $\boldsymbol\mu_c$-unstable parabolic vector bundle consists of $16$ irreducible components. 
\end{itemize}

 \section{Higgs fields having unstable parabolic bundles}\label{section:uns} In this section, we determine the locus formed by Higgs bundles in $\mathcal H(C)$ which admit an unstable underlying parabolic bundle.

 \subsection{Unstable parabolic vector bundles} 
 We let $E_0$ denote the unique, up to isomorphism, nontrivial extension 
 \[
 0 \longrightarrow \mathcal O_C  \longrightarrow E_0  \longrightarrow \mathcal O_C  \longrightarrow 0. 
 \]
Any indecomposable rank two vector bundle $E$ on $C$, having trivial determinant line bundle, is of the form $E \simeq E_0\otimes L_i$, where 
\[
L_i=\mathcal O_C(w_{\infty} - w_i)
\] 
$i\in \{0, 1, \lambda, \infty\}$,  are the torsion line bundles.    
 
 \begin{prop}\label{prop:bundC}
Given $(E, {\bf{p}}, \theta)\in \mathcal H(C)$, then 
\begin{itemize}
\item either $E\simeq L\oplus L^{-1}$ with $\deg L \in \{-1, 0\}$ or $E \simeq E_0\otimes L_i$, $i\in \{0, 1, \lambda, \infty\}$;
\item if $E\simeq L\oplus L^{-1}$ and $\deg L = -1$, then $L^2=\mathcal O_C(-t_1-t_2)$ and there is an embedding of $L$ passing through each parabolic directions;
\item if $E\simeq L\oplus L^{-1}$, $\deg L = 0$, and $L$ contains each parabolic directions, then $L^2 = \mathcal O_C$, i.e., $L=L_i$ for some $i=\{0, 1, \lambda, \infty\}$;
\item if $E \simeq E_0\otimes L_i$, then at most one parabolic direction lies in the maximal subbundle $L_i$.
\end{itemize}
 \end{prop}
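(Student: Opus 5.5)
Starting from Atiyah's classification of rank two bundles on an elliptic curve, I will use the semistability of $(E,{\bf p},\theta)$ for ${\bf a}_c=(\frac12,\frac12)$ to bound the degree of destabilizing subbundles, and I will treat the cases listed in the statement one at a time. For a line subbundle $L\subset E$ with $\det E=\mathcal O_C$ we have ${\rm Stab}(L)=-2\deg L+\frac12(\#\{p_i\not\subset L\}-\#\{p_i\subset L\})$, so ${\rm Stab}(L)\ge -2\deg L-1$. The key point is that the maximal subbundle must be controlled even when it is not $\theta$-invariant.

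\emph{Case $E$ decomposable.} Write $E=L\oplus L^{-1}$ with $\deg L\le 0$. If $\deg L\le -1$, then $L^{-1}$ has degree $\ge1$.
\begin{itemize}
\item If $L^{-1}$ is $\theta$-invariant, then ${\rm Stab}(L^{-1})\le 2\deg L+1<0$, which is impossible.
\item Otherwise the composition $L^{-1}\to E\xrightarrow{\theta} E\otimes\omega_C(D)\to L\otimes\mathcal O_C(D)$ is a nonzero section of $L^2(D)$. This forces $\deg L^2+2\ge 0$, that is, $\deg L\ge -1$.
\end{itemize}
Hence $\deg L=-1$, and $L^2(D)$ is a degree zero line bundle with a nonzero section, so $L^2=\mathcal O_C(-t_1-t_2)$.

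For the parabolic directions, I would use nilpotency of the residues. Since ${\rm Res}(\theta,t_i)$ is nilpotent with image in $p_i$, the off-diagonal component $L^{-1}\to L(D)$, which is an isomorphism $L^{-1}\simeq L(D)$, must be compatible with the residue structure. Concretely, at $t_i$ the residue has nonzero lower-left entry, and it is nilpotent, so its kernel $p_i$ is not $L^{-1}_{t_i}$. I then want an embedding of $L$ through both $p_i$; for this I use the freedom of automorphisms $\begin{pmatrix}1&0\\ \sigma&1\end{pmatrix}$ with $\sigma\in H^0(L^{-2})$. This space has dimension $2$ and evaluates surjectively onto the two fibres at $t_1,t_2$, because $h^0(L^{-2}(-D))=h^0(\mathcal O_C)=1$ gives the right count, or more directly $L^{-2}=\mathcal O(D)$ and $H^0(\mathcal O(D))\to \mathcal O(D)|_D$ has the correct rank after checking the residue. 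This lets me move the graph of $L$ through $p_1$ and $p_2$ unless some $p_i=L^{-1}_{t_i}$, which was excluded above.

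If $\deg L=0$ and both $p_i\subset L$, then ${\rm Stab}(L)=-1$, so $L$ cannot be invariant, and likewise for $L^{-1}$ if it contains both directions. The off-diagonal map $L\to L^{-1}(D)$ is nonzero, and nilpotency of the residue with kernel $p_i=L_{t_i}$ forces its residue to vanish at the $t_i$. So it lies in $H^0(L^{-2}\otimes\omega_C)=H^0(L^{-2})$, which is nonzero only if $L^2=\mathcal O_C$.

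\emph{Case $E$ indecomposable.} Here $E=E_0\otimes L_i$ and the maximal subbundle is $L_i$, of degree $0$. If both $p_1,p_2\subset L_i$, then ${\rm Stab}(L_i)=-1$, so $L_i$ cannot be $\theta$-invariant. The induced map $L_i\to (E/L_i)(D)=L_i(D)$ then has vanishing residue at $t_1,t_2$, by the same nilpotency argument, so it is a nonzero constant. I then need to show this contradicts $\theta$ being a well-defined traceless field on the nonsplit extension. The approach is to check that the lower-left entry being a nonzero constant $c$ is obstructed by the extension class, via the connecting map $H^0(\mathcal O)\to H^1(\mathcal O)$, or alternatively by computing ${\rm tr}\,\theta^2$ residues. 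I expect this step to be the main obstacle, together with the verification in the degree $-1$ case that the parabolic directions can be realized on $L$.
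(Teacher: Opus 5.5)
Your treatment of the first bullet, of $L^2=\mathcal O_C(-t_1-t_2)$, of $p_i\neq L^{-1}_{t_i}$, and of the degree-zero bullet matches the paper.

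\textbf{Gap in the degree $-1$ case.} The genuine gap is your claim that an embedding of $L$ passes through both $p_1$ and $p_2$. The evaluation map $H^0(L^{-2})=H^0(\mathcal O_C(t_1+t_2))\to L^{-2}|_{t_1}\oplus L^{-2}|_{t_2}$ is \emph{not} surjective. Its kernel is $H^0(L^{-2}(-D))=H^0(\mathcal O_C)\simeq\C$, so its image is a line; your own count says exactly this. Equivalently, by the residue theorem, a function with simple poles only at $t_1,t_2$ has opposite residues against $dx/y$.

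Consequently, for a general parabolic structure with $p_i\neq L^{-1}_{t_i}$, no embedding of $L$ contains both directions. The statement holds only because of $\theta$, and the automorphisms $\begin{pmatrix}1&0\\ \sigma&1\end{pmatrix}$ alone cannot produce it. The missing argument, which is the paper's, goes as follows:
\begin{enumerate}
\item Use the automorphisms only to get $p_1\subset L$. This is possible because evaluation at the single point $t_1$ is onto, since $h^0(L^{-2}(-t_1))=1$.
\item Nilpotency at $t_1$ then gives ${\rm Res}(\alpha,t_1)=0$. Hence $\alpha\in\Gamma(\omega_C(t_2))=\Gamma(\omega_C)$ is holomorphic, and ${\rm Res}(\theta,t_2)$ is anti-diagonal with entries $b,c$.
\item The entry $b$ comes from $\beta$, which is the composite $L^{-1}\to E\to E/L^{-1}\simeq L$ and so does not depend on the chosen embedding of $L$. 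It is a nonzero constant section of $L^2\otimes\omega_C(D)\simeq\mathcal O_C$, so $b\neq 0$.
\item Nilpotency forces $c=0$, and therefore $p_2=\ker{\rm Res}(\theta,t_2)=L_{t_2}$.
\end{enumerate}

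\textbf{The last bullet is unfinished.} You stop at the decisive step. Your connecting-map idea does work, but you must first observe the following. Since ${\rm Res}(\theta,t_j)$ annihilates $p_j=L_i|_{t_j}$ for $j=1,2$, the \emph{whole} restriction $\theta|_{L_i}$ is holomorphic. That is, it lies in ${\rm Hom}(L_i,E\otimes\omega_C)\simeq H^0(E_0)$. The connecting map $H^0(\mathcal O_C)\to H^1(\mathcal O_C)$ of the nonsplit extension is injective, so $H^0(E_0)=\C$ is spanned by the inclusion. Hence $L_i$ is $\theta$-invariant, contradicting ${\rm Stab}(L_i)=-1$. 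Working with the lower-left entry alone inside $\Gamma(\omega_C(D))$ gives no obstruction, because $H^1(\omega_C(D))=0$; what matters is the vanishing of the diagonal residues as well.

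This direct argument differs from the paper's. The paper applies $[\otimes\mathcal O_C(w_\infty)]\circ\elem_D$ to turn $E_0\otimes L_i$ into $L\oplus L^{-1}$ with $L=L_i(-w_\infty)$. It then shows that no embedding of $L$ contains both transformed directions, since otherwise transforming back would give two maximal subbundles. Finally it invokes the degree $-1$ analysis to conclude $\beta=0$. So the paper's route depends on precisely the degree $-1$ step your proposal currently gets wrong, whereas the cohomological route, once completed as above, is self-contained.
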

 
\proof
 Since $E$ has trivial determinant, we may assume either $E   \simeq L\oplus L^{-1}$, with $\deg L\le 0$ or $E  \simeq E_0\otimes L_i$. A Higgs field 
\begin{eqnarray*}
\theta=\left(
\begin{array}{ccc} 
\alpha & \beta  \\
\gamma & -\alpha  \\
\end{array}
\right)
\end{eqnarray*}
on $L\oplus L^{-1}$ with logarithmic poles on $D = t_1+t_2$  is given by homomorphisms 
\begin{displaymath}
\left\{ \begin{array}{ll}
\alpha: \mathcal O_C \to \omega_{C}(t_1+t_2)\\
\beta: L^{-1} \to L\otimes\omega_c(t_1+t_2)\\
\gamma: L \to L^{-1}\otimes\omega_c(t_1+t_2)
\end{array} \right.
\end{displaymath}
which is equivalent to give
\begin{displaymath}
\left\{ \begin{array}{ll}
\alpha \in \Gamma(\mathcal O_C(t_1+t_2))\\
\beta \in \Gamma(L^2(t_1+t_2))\\
\gamma \in \Gamma(L^{-2}(t_1+t_2))
\end{array} \right. 
\end{displaymath}
because $\omega_c$ is trivial. If $\deg L < -1$ then  $\Gamma(L^2(t_1+t_2))$ vanishes,  hence $\beta = 0$  and $L^{-1}$ is a destabilizing  invariant subbundle for $\theta$. This shows that $\deg L \in \{-1, 0\}$. 

We get the same conclusion when $\deg L = -1$ and $L^2\neq \mathcal O_C(-t_1-t_2)$, because  $\Gamma(L^2(t_1+t_2))$ is again trivial. Therefore, assuming $\deg L = -1$ we have $L^2 = \mathcal O_C(-t_1-t_2)$ and  $\beta\in \Gamma(\mathcal O_C)$. If $L^{-1}$ contains one parabolic direction $p_i$ then the nilpotence condition on $\theta$ implies that $\beta$ vanishes at $t_i$, which gives $\beta = 0$ and $L^{-1}$ is an invariant destabilizing  subbundle. Now, we show that there is an embedding of $L$ passing through each parabolic directions $p_1, p_2$. First, we can take an embedding of $L$ passing through $p_1$, and this implies that $\alpha\in \Gamma(\mathcal O_C)$, i.e., the residual matrix of $\theta$ at $t_2$ is anti-diagonal 
\begin{eqnarray*}
{\rm Res} (\theta, t_2) = \left(
\begin{array}{ccc} 
0 & b  \\
c & 0  \\
\end{array}
\right).
\end{eqnarray*} 
Since ${\rm Res} (\theta, t_2)\cdot p_2 = 0$,  if  $p_2$ does not lie in $L$ then  $b=0$, consequently $\beta = 0$ and $L^{-1}$ is a destabilizing subbundle.  This shows that $p_1, p_2$ lie in $L$. 
 
Let us assume $E = L\oplus L^{-1}$,  $\deg L = 0$. If two parabolic directions lie in $L$, then $\gamma$  lies in the subspace $\Gamma(L^{-2})$ of  $\Gamma(L^{-2}(t_1+t_2))$. We want $\gamma \neq 0$, because otherwise $L$ is invariant under $\theta$ and consequently it is a destabilizing subbundle. Since $\deg L = 0$,  the conclusion follows from the fact that  $\Gamma(L^{-2})\neq 0$ if and only if $L^2 = \mathcal O_C$. 

Finally,  assuming that $E = E_0\otimes L_i$,  and $L_i$ contains each parabolic directions,  we will show that $L_i$ must be invariant under $\theta$, destabilizing the Higgs bundle.  After applying an elementary transformation $\elem_D$,  and twisting by a square root $\mathcal O_C(w_{\infty})$ of $\mathcal O_C(D)$ to redress the determinant, the vector bundle $E$ is transformed into $L\oplus L^{-1}$ with $L = L_i(-w_{\infty})$.  We note that there is no embedding of $L$ passing through each parabolic directions. In fact, if $L$ contains the two parabolics, then applying the same transformation $[\otimes \mathcal O_C(w_{\infty})]\circ \elem_D$, we obtain a vector bundle which contains two maximal subundles $L(w_{\infty})$ and $L^{-1}(-w_{\infty})$. This is a contradiction, because we should recover the indecomposable bundle $E_0\otimes L_i$.  This shows that $L$ does not contain each parabolic directions, and  therefore  as we have seen in the previous case, $L^{-1}$ is invariant under the transformed $\theta'$ of $\theta$, and consequently $L_i$ is invariant under $\theta$.  
\endproof
 
In the next result, we give the first consequence of Proposition~\ref{prop:bundC}. It is a version of \cite[Proposition 3.3]{FL2} for the elliptic case, and the proof follows the same lines, so  it  will be omitted. 
 
\begin{cor}\label{cor:8parbun}
Let $(E, {\bf{p}}, \theta)\in \mathcal H(C)$. Assume that $(E, {\bf p})$ is ${\bf a}_c$-unstable, then 
 $E = L\oplus L^{-1}$, with $L \simeq L_i$ or $L \simeq L_i(-w_{\infty})$,  and  each parabolic directions lie in $L$. In particular, the locus of $\mathcal H(C)$ formed by  Higgs bundles which admit unstable underlying parabolic bundle has $8$ irreducible components. 
\end{cor}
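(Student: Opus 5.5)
The plan is to start from Proposition~\ref{prop:bundC}, which lists the possible underlying bundles $E$, and to go through each case looking for a line subbundle $L\subset E$ with ${\rm Stab}_{{\bf a}_c}(L)<0$. The weights are ${\bf a}_c=(\frac12,\frac12)$ and $\deg E=0$, so ${\rm Stab}(L)=-2\deg L-\frac12 k+\frac12(2-k)=1-2\deg L-k$, where $k$ is the number of parabolic directions contained in $L$. A destabilizing subbundle therefore satisfies either $\deg L\ge 1$, or $\deg L=0$ and $k=2$. Since $\deg L=1$ and $k=0$ gives ${\rm Stab}=-1$, the degree-one case needs to be checked as well.

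\textbf{Case $E\simeq E_0\otimes L_i$.} Every line subbundle of $E$ has degree $\le 0$, and the only degree-zero subbundle is the maximal one, $L_i$. By the last item of Proposition~\ref{prop:bundC}, $L_i$ contains at most one parabolic direction, so ${\rm Stab}\ge 0$ for all subbundles. Hence $(E,{\bf p})$ is semistable and this case does not occur.

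\textbf{Case $E\simeq L\oplus L^{-1}$ with $\deg L=-1$.} The subbundle $L^{-1}$ has degree $1$. Proposition~\ref{prop:bundC} gives $L^2=\mathcal O_C(-t_1-t_2)$ and an embedding of $L$ through both parabolic directions, so $(E,{\bf p})$ is unstable. Now rename $L^{-1}$: the unstable bundle is $M\oplus M^{-1}$ with $M=L$ of degree $-1$, $M^2=\mathcal O_C(-D)$, and both directions lying in $M$. We must show $M\simeq L_i(-w_\infty)$. Since $\mathcal O_C(D)\simeq \mathcal O_C(2w_\infty)$ would require $\pi(t_1)=\pi(t_2)=\infty$, I would use $D\sim t_1+t_2$ with $t_1+t_2=\pi^*(t)\sim 2w_\infty$; indeed $\pi^*\mathcal O_{\P^1}(1)=\mathcal O_C(2w_\infty)$. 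It follows that $M^2=\mathcal O_C(-2w_\infty)$, so $M=L_i(-w_\infty)$ for one of the four $2$-torsion $L_i$.

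\textbf{Case $\deg L=0$.} A degree-one subbundle would force a nonzero map into $L$ or $L^{-1}$ from a degree-one line bundle, which is impossible. So instability requires a degree-zero subbundle $N$ containing both directions. If $L\not\simeq L^{-1}$, the only degree-zero subbundles are $L$ and $L^{-1}$, and Proposition~\ref{prop:bundC} gives $L^2=\mathcal O_C$, i.e.\ $L=L_i$. If $L\simeq L^{-1}$, then $L=L_i$ already, $E=L_i\otimes\C^2$, and any degree-zero subbundle is a constant line $L_i\otimes\ell$. Repeating the argument of Proposition~\ref{prop:bundC} with the splitting chosen so that $N$ is the first factor shows that $N$ must be such a constant line (ruling out $\gamma=0$ uses $\Gamma(\mathcal O_C)\neq 0$, which holds here). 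After renaming, $E=L\oplus L^{-1}$ with $L\in\{L_i,\,L_i(-w_\infty)\}$ and both directions in $L$.

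\textbf{Counting components.} The choice of $i$ together with the two types $L_i$ and $L_i(-w_\infty)$ gives $8$ discrete types of unstable parabolic bundle. For each type, the admissible Higgs fields, namely those for which no destabilizing subbundle is invariant, form a Zariski-open subset of a vector space of nilpotent Higgs fields modulo automorphisms. I would check nonemptiness using the sections exhibited in Proposition~\ref{prop:bundC}: $\beta\in\Gamma(\mathcal O_C)$ nonzero in the degree $-1$ case, and $\gamma\in\Gamma(L_i^{-2})\neq 0$ in the degree-zero case. Each locus is therefore irreducible, and since the types are distinct these are $8$ different components.

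The main obstacle is this last step. It requires confirming that each such locus is nonempty and irreducible, and that no locus lies in the closure of another. A family of unstable parabolic bundles with Higgs fields cannot jump between the discrete types, so the loci are disjoint; but dimensions still have to be compared to rule out containment in a closure. I would compare dimensions via the automorphism groups, and mirror the argument of \cite[Proposition 3.3]{FL2} for the remaining details.
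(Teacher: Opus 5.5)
Your proposal is correct. It is essentially the argument the paper intends: the paper omits the proof and says it follows from Proposition 4.1 along the lines of \cite[Proposition 3.3]{FL2}. One slip: the aside that $\mathcal O_C(D)\simeq\mathcal O_C(2w_\infty)$ would force $\pi(t_1)=\pi(t_2)=\infty$ is false, but you do not use it, since you immediately switch to the correct relation $t_1+t_2=\pi^*(t)\sim 2w_\infty$.

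The closure issue you flag goes through by the dimension comparison you propose. These Higgs bundles are all stable and have pairwise non-isomorphic underlying parabolic bundles, so the $8$ loci are disjoint. Each locus is $2$-dimensional: a $4$-dimensional space of nilpotent Higgs fields, modulo a $2$-dimensional projectivized automorphism group acting freely. Hence none of them can lie in the closure of another.
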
 

Table~\ref{8} shows the $8$ parabolic vector bundles given by Corollary~\ref{cor:8parbun}. We may determine all the  points in the nilpotent cone which admit an unstable underlying parabolic vector bundle, this will be done in the next result.       
 
  \begin{table}
\centering
\begin{tabular}[p]{|c|c|c|}
   \hline
            & $E$ & ${\bf p} = \{p_1, p_2\}$ and $L_i=\mathcal O_C(w_{\infty}-w_i)$, $i\in\{0, 1, \lambda, \infty\}$\\
   \hline \hline           
   $4$ & $L_i\oplus L_i$ & $p_1, p_2\subset L_i \hookrightarrow E$ \\
   \hline
   $4$ & $L_i(-w_{\infty})\oplus L_i(w_{\infty})$ &  $p_1, p_2\subset L_i(-w_{\infty}) \hookrightarrow E$\\
   \hline
\end{tabular}
\caption{The $8$ unstable parabolic bundles over $(C,t_1+t_2)$ admitting  a stable Higgs field.}
\label{8}
\end{table}

\begin{cor}\label{cor:nilpunstable}
There are exactly $8$ Higgs bundles in $\mathcal H(C)$ which have vanishing determinant and admit an unstable parabolic vector bundle. They are image by $\Phi$ of the $16$ Hodge bundles $\Theta_i$ (c.f. \cite[Proposition 4.1]{FL2}) of $\mathcal H(\P^1)$. 
\end{cor}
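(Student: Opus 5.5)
Using Corollary~\ref{cor:8parbun}, any Higgs bundle in $\mathcal H(C)$ with ${\bf a}_c$-unstable underlying parabolic bundle has $E=L\oplus L^{-1}$ with $L\simeq L_i$ or $L\simeq L_i(-w_\infty)$, and both parabolic directions lie in $L$. For each of the $8$ rows of Table~\ref{8}, I will write down all Higgs fields $\theta=\left(\begin{smallmatrix}\alpha&\beta\\ \gamma&-\alpha\end{smallmatrix}\right)$ compatible with ${\bf p}$ and stable, and then impose $\det\theta=0$. The goal is to show that exactly one point of $\mathcal H(C)$ survives for each row.

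\emph{Case $E=L_i\oplus L_i$.} Since $p_1,p_2$ lie in the first copy $L$, nilpotence of the residues forces $\gamma\in\Gamma(L_i^{-2})=\Gamma(\mathcal O_C)$, and $\gamma\neq0$ by stability (as in the proof of Proposition~\ref{prop:bundC}). Nilpotence also forces $\alpha$ to be holomorphic, so $\alpha\in\Gamma(\mathcal O_C)$. We have $\beta\in\Gamma(\mathcal O_C(t_1+t_2))$. Then $\det\theta=-\alpha^2-\beta\gamma=0$ gives $\beta=-\alpha^2/\gamma$, a constant. Conjugating by the unipotent automorphism $\left(\begin{smallmatrix}1&c\\0&1\end{smallmatrix}\right)$, which preserves $L$ and hence ${\bf p}$, we can kill $\alpha$ and reduce to $\theta=\left(\begin{smallmatrix}0&0\\ \gamma&0\end{smallmatrix}\right)$. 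Scaling by the diagonal automorphisms normalizes $\gamma=1$. So there is a single point, which is $\C^*$-fixed. It must still be checked to be stable: the only $\theta$-invariant subbundle is the image of $\gamma$, namely the second $L_i$, which contains no parabolic direction and so has ${\rm Stab}=1>0$.

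\emph{Case $E=L_i(-w_\infty)\oplus L_i(w_\infty)$.} The same computation applies with $L=L_i(-w_\infty)$ and $L^2=\mathcal O_C(-2w_\infty)$. Here $\gamma\in\Gamma(L^{-2})=\Gamma(\mathcal O_C(2w_\infty))$, and $\beta\in\Gamma(\mathcal O_C(t_1+t_2-2w_\infty))$, which is nonzero only if $t_1+t_2\sim 2w_\infty$. That holds because $t_1,t_2$ are exchanged by the elliptic involution. Nilpotence again makes $\alpha$ holomorphic. The condition $\det\theta=0$ now reads $\alpha^2=-\beta\gamma$ as sections of $\mathcal O_C(t_1+t_2)$. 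Since $\alpha$ is constant, I expect this to force $\alpha=\beta=0$ unless $\gamma$ is a constant multiple of the section defining $t_1+t_2$. I would handle it by using the automorphisms $\left(\begin{smallmatrix}1&s\\0&1\end{smallmatrix}\right)$ with $s\in\Gamma(L^2)=0$; since these are trivial, I would instead exploit the automorphism group $\C^*$ together with $\mathrm{Aut}$ of $L^{-1}$ acting on $\gamma$. I must also check whether $\beta\neq0$ is compatible with the nilpotence of the residues. The expectation is that nilpotence at $t_1,t_2$ forces $\beta(t_i)=0$, so $\beta=0$ and then $\alpha=0$. What remains is $\theta=\left(\begin{smallmatrix}0&0\\ \gamma&0\end{smallmatrix}\right)$ with $\gamma\in\Gamma(\mathcal O_C(2w_\infty))\setminus0$ modulo $\C^*$, which is a one-parameter family, i.e.\ a pencil $\P^1$.

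This is the main obstacle. A single point per row is needed, so I must identify which $\gamma$ give distinct isomorphism classes. I suspect the residual nilpotence forces $\gamma$ to vanish at prescribed points, or that the stability condition, or $S$-equivalence on the wall, collapses the pencil. I would test this by applying $[\otimes\mathcal O_C(w_\infty)]\circ\elem_D$, which by the proof of Proposition~\ref{prop:bundC} exchanges the two rows of Table~\ref{8}. This reduces the second case to the first, and the count for the first case gives exactly one point per $i$.

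Finally, to relate these to $\Phi$: each $\Theta_i$ is $\C^*$-fixed and nilpotent, so $\Phi(\Theta_i)$ is too, since $\Phi$ commutes with scaling of $\theta$. The pullback of the destabilizing subbundle of $\Theta_i$, after $\elem_R$ and the twist, gives a destabilizing subbundle of $\Phi(\Theta_i)$. I would verify this degree count directly. Hence the $16$ points $\Phi(\Theta_i)$ lie among the $8$ points found above. Since $\elem_{0,1,\lambda,\infty}$ permutes the $\Theta_i$ in pairs and $\Phi$ is its quotient, the image consists of $8$ points. This shows that all $8$ are attained.
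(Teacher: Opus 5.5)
Your computation for $E=L_i\oplus L_i$, followed by the reduction of the other row type to it, is exactly the paper's proof. The reduction uses $[\otimes\mathcal O_C(w_\infty)]\circ\elem_D$, which swaps the two kinds of unstable parabolic bundles in the table, preserves ${\bf a}_c$-stability and does not change $\det\theta$. Your unipotent conjugation is equivalent to the paper's choice of second factor $e\mapsto(\alpha_0e,\gamma_0e)$.

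Your direct analysis of $E=L_i(-w_\infty)\oplus L_i(w_\infty)$ is wrong, however, and as written it contradicts the reduction. The ``main obstacle'' comes from two mistakes.

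\emph{First mistake.} With $p_{t_1},p_{t_2}\subset L=L_i(-w_\infty)$, the residue condition only forces ${\rm Res}(\alpha,t_k)={\rm Res}(\gamma,t_k)=0$. A residue $\bigl(\begin{smallmatrix}0&b\\0&0\end{smallmatrix}\bigr)$ kills $p_k$ for every $b$, so nothing forces $\beta(t_k)=0$. On the contrary, stability forces $\beta\neq0$. If $\beta=0$, then $L^{-1}=L_i(w_\infty)$ is $\theta$-invariant with ${\rm Stab}_{{\bf a}_c}(L^{-1})=-1$; this is the same mechanism as in the $\deg L=-1$ case of the classification proposition. So every member of your pencil $\bigl(\begin{smallmatrix}0&0\\ \gamma&0\end{smallmatrix}\bigr)$ is unstable, and none lies in $\mathcal H(C)$. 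No $S$-equivalence or collapsing is involved.

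\emph{Second mistake.} You missed parabolic automorphisms. Besides scalars, $\bigl(\begin{smallmatrix}1&0\\ c&1\end{smallmatrix}\bigr)$ with $c\in\Gamma(L^{-2}(-t_1-t_2))\cong\C$, i.e.\ $c$ a multiple of $x-t$, preserves ${\bf p}$.

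\emph{Corrected computation.} Write $\beta=b/(x-t)$ with $b\neq0$. Then $\det\theta=0$ forces $\gamma=-(\alpha^2/b)(x-t)$. Conjugating by $c=-(\alpha/b)(x-t)$ gives $\theta\sim\bigl(\begin{smallmatrix}0&\beta\\0&0\end{smallmatrix}\bigr)$, and diagonal scaling sets $b=1$. The only invariant subbundle is then $L$, with ${\rm Stab}_{{\bf a}_c}(L)=1$. So the direct computation also gives exactly one point per $i$, in agreement with the reduction.

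For the second assertion, the paper's written proof gives no argument, so your sketch goes further. The destabilizing step is sound. For a line subbundle $M\subset E$ on $\P^1$, its transform is
$\tilde M\simeq\pi^*M\otimes\mathcal O_C(2w_\infty-\sum_{p_j\not\subset M}w_j)$, with $j\in\{0,1,\lambda,\infty\}$.
One checks ${\rm Stab}_{{\bf a}_c}(\tilde M)=2\,{\rm Stab}_{\boldsymbol\mu_c}(M)$, and invariance is preserved. Hence each $\Phi(\Theta_i)$ is one of the $8$ points.

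The gap is the last step, ``$\Phi$ is the quotient by $\elem_{0,1,\lambda,\infty}$, so the image is $8$ points''. Degree two together with $\Phi\circ\elem_{0,1,\lambda,\infty}=\Phi$ does not show that $\Phi$ separates distinct orbits at special points. It also does not show that no $\Theta_i$ is fixed. You also cannot invoke surjectivity of $\Phi$: it is proved later via the nilpotent-cone theorem, which itself uses this corollary.

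Instead, read off the row of each $\Phi(\Theta_i)$ directly from the formula for $\tilde M$, as in the proof that the $16$ lines go to $8$ lines. Two examples:
\begin{itemize}
\item $\mathcal O\oplus\mathcal O$ with $p_0,p_1,p_\lambda$ in one factor goes to $\mathcal O_C(w_\infty)\oplus\mathcal O_C(-w_\infty)$ with $p_{t_1},p_{t_2}\subset\mathcal O_C(-w_\infty)$.
\item $\mathcal O\oplus\mathcal O$ with $p_t,p_j,p_\infty$ in one factor goes to $L_j\oplus L_j$ with both directions in the image of that factor.
\end{itemize}
Then check that every row of the table is hit.
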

 
\proof
Let $(E, {\bf p}, \theta)\in \mathcal H(C)$ where $\theta$ has vanishing determinant and $(E, {\bf p})$ is ${\bf a}_c$-unstable. By Corollary~\ref{cor:8parbun},  $(E, {\bf p})$ is either of the form 
$L_i\oplus L_i$ with $p_1, p_2\subset L_i \hookrightarrow E$  or $L_i(-w_{\infty})\oplus L_i(w_{\infty})$ with  $p_1, p_2\subset L_i(-w_{\infty}) \hookrightarrow E$. These two cases are interchanged by an elementary transformation over $t_1, t_2$ and it does not change the determinant of the Higgs field, we then assume that we are in the first case. 

So, we can take  $E=L_i\oplus L_i$ where $p_1, p_2$ lie in the first factor $L_i\subset E$. The  Higgs field 
\begin{eqnarray*}
\theta=\left(
\begin{array}{ccc} 
\alpha & \beta  \\
\gamma & -\alpha  \\
\end{array}
\right)
\end{eqnarray*}
on $E$  is given by homomorphisms 
\begin{displaymath}
\left\{ \begin{array}{ll}
\alpha: \mathcal O_C \to \omega_{C}\\
\beta: L_i \to L_i\otimes\omega_c(t_1+t_2)\\
\gamma: L_i \to L_i\otimes\omega_c
\end{array} \right. 
\end{displaymath}
Here, the nilpotence condition over the two parabolic directions, corresponding to  $(1,0)$ in each cases, has already been applied. This justifies the absence of poles in $\alpha$ and $\gamma$. Note that $\gamma\neq 0$, otherwise the first factor $L_i$ is invariant and $\theta$ is unstable. Beside this,  $\alpha$ and $\gamma$ are determined by global $1$-forms $\alpha_0\frac{dx}{y}$ and $\gamma_0\frac{dx}{y}$, where $\alpha_0, \gamma_0 \in \mathbb C$ and $\gamma_0\neq 0$.

Now,  choosing  the second factor $L_i$  as given by the embedding 
\[
L_i\hookrightarrow E, \quad e \mapsto (\alpha_0\cdot e, \gamma_0\cdot e)
\] 
we see that $\theta(e_1) = e_2$ and $\theta(e_2) = -s\cdot e_1$ where $\det(\theta) = s\cdot(\frac{dx}{y})^{\otimes 2}$, i.e.,  the matrix of $\theta$ with respect to this choice is 
\begin{eqnarray*}
\theta=\left(
\begin{array}{ccc} 
0 & -s  \\
1 & 0  \\
\end{array}
\right)\cdot \frac{dx}{y}\;.
\end{eqnarray*}
Since $\theta$ has vanishing determinant, then  $s=0$ and this finishes the proof of the corollary. 

\endproof 

From Corollary~\ref{cor:nilpunstable}, we conclude that each ${\bf a}_c$-unstable parabolic bundle $E=L_i\oplus L_i$ with $p_1, p_2\subset L_i \hookrightarrow E$, $i\in\{0,1,\lambda, \infty\}$, admits a unique (up to isomorphisms) stable Higgs field having vanishing determinant, which is of the form
\begin{eqnarray*}
\theta=\left(
\begin{array}{ccc} 
0 & 0  \\
1 & 0  \\
\end{array}
\right)\cdot \frac{dx}{y}\;
\end{eqnarray*}
and this gives $4$ elements. The remaining $4$ Higgs fields, those over $L_i(-w_{\infty})\oplus L_i(w_{\infty})$, are obtained from theses by applying an elementary transformation over $t_1$, $t_2$. 

\subsection{The modular map is a morphism}
We now give a result that completes the proof of Theorem~\ref{thm:morphismPhi_0}.  
 
\begin{prop}\label{prop:welldefined}
Let $(E, {\bf p}, \theta)$ be any Higgs bundle in $\mathcal H(\P^1)$. Then $\Phi(E, {\bf p}, \theta)$ is ${\bf a}_c$-semistable, i.e. it lies in $\mathcal H(C)$. 
\end{prop}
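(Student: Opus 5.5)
We argue by contradiction. Suppose $(E',{\bf p}',\theta') := \Phi(E,{\bf p},\theta)$ admits a $\theta'$-invariant line subbundle $N\subset E'$ with ${\rm Stab}_{{\bf a}_c}(N)<0$. We will show that $N$ descends, up to the elementary transformations, to a $\theta$-invariant subbundle of $E$ violating $\boldsymbol\mu_c$-stability. By the final description of Section~\ref{section:uns} the destabilizing configurations on $C$ are very restricted, so this descent should be manageable. Since $\deg E'=0$, the condition ${\rm Stab}_{{\bf a}_c}(N)<0$ means either $\deg N\ge 1$, or $\deg N=0$ and both $p_1',p_2'\subset N$. The case $\deg N\geq 2$ is impossible because $\theta'$ would have to preserve a subbundle whose degree is too large compared with the Higgs field with poles on $D$, as in the degree count in Proposition~\ref{prop:bundC}. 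So the cases to treat are $\deg N\in\{0,1\}$.

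\textbf{Step 1: use the covering involution.} Let $\iota$ be the hyperelliptic involution of $C$ over $\P^1$, which swaps $t_1$ and $t_2$. The bundle $\pi^*E$ and $\pi^*\theta$ carry a natural $\iota$-linearization, and $\elem_R$ together with the twist by $\mathcal O_C(2w_\infty)$ is $\iota$-equivariant, since $R$ and $w_\infty$ are $\iota$-invariant. Hence $(E',{\bf p}',\theta')$ has a lift of $\iota$. If $N$ is the unique maximal destabilizing subbundle, which is the case when $\deg N=1$, or when $\deg N=0$ with $N\neq\iota^*N$ inside $E'$, then $N$ is $\iota$-invariant. Undoing $\elem_R$ turns $N$ into an $\iota$-invariant, $\pi^*\theta$-invariant subbundle $\tilde N\subset\pi^*E$. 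Such a subbundle descends to a $\theta$-invariant subbundle $N_0\subset E$ with $\pi^*N_0\cong\tilde N\otimes\mathcal O_C(-\sum_{i\in J}w_i)$, where $J$ is the set of ramification points at which $\tilde N$ is the anti-invariant eigenline.

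\textbf{Step 2: degree bookkeeping.} Compute $\deg\tilde N=\deg N-2+\#\{i: \tilde N_{w_i}\neq \pi^*p_{i}\}$ using the rule for $L'$ in the list of properties of $\elem$. Then relate $\deg N_0$, the set $J$, and the incidences of $N_0$ with the five parabolic directions of $E$. Note that at the ramification points, $\tilde N$ passing through $\pi^*p_i$ corresponds to the invariant or anti-invariant eigenline in a fixed manner. Also, $N\supset p_j'$ for $j=1,2$ is equivalent to $N_0\supset p_t$. Summing up, the inequality ${\rm Stab}_{{\bf a}_c}(N)\le 0$ should translate into ${\rm Stab}_{\boldsymbol\mu_c}(N_0)\le 0$ with weights $\frac12$ at five points. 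Because an odd number of halves cannot sum to an integer, this would give strict instability, contradicting the stability of every point of $\mathcal H(\P^1)$.

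\textbf{Step 3: the remaining case.} The remaining case is $\deg N=0$, $p_1',p_2'\subset N$, with $N$ not $\iota$-invariant. Then $N$ and $\iota^*N$ are two distinct invariant subbundles, so $E'\cong N\oplus\iota^*N$, $\theta'$ is diagonal, and both summands contain both parabolic directions. This contradicts the fact that $p_j'$ is a single line. Hence $N=\iota^*N$, and we reduce to Step 1.

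I expect the main obstacle to be Step 2, namely matching the $\pm$ eigenline data of the linearization at the $w_i$ with the parabolic directions produced by $\elem_R$. This is what guarantees the exact parity needed for the contradiction. If this local computation becomes messy, a fallback is to check semistability separately on the $16$ unstable-bundle components of \cite[Proposition 3.3]{FL2} and on the generic locus, where stability follows from \cite{FL1}. One would then use that the image is closed in a projective compactification of each Hitchin fiber, via the properness of $\det$.
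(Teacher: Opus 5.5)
Your equivariant strategy works and is genuinely different from the paper's. Steps 1 and 3 correctly show that any destabilizing $N$ is $\tilde\iota$-invariant. For $\deg N\ge 1$ this is because $N$ is the unique positive-degree subbundle; for $\deg N=0$ it is your direct-sum contradiction at $t_1$. However, the descent formula in Step 1 is wrong, and Step 2, which you only assert, would not close with it.

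With the canonical linearization, $\iota$ acts as the identity on $(\pi^*E)_{w_i}=E_{\pi(w_i)}$, so $\pi^*E$ has no anti-invariant line there. The invariant saturated subbundle $\tilde N$ therefore descends with no correction term: descend the invariant section of $\P(\pi^*E)=C\times_{\P^1}\P E$ to get $\tilde N=\pi^*N_0$ and $\deg\tilde N=2\deg N_0$. The $\pm1$ eigenlines live on $E''=\elem_R(\pi^*E)$, where the new parabolic line at $w_i$ is the $(-1)$-eigenline. It is $N''=N(-2w_\infty)$ that satisfies $N''=\pi^*N_0(-\sum_{i\in J}w_i)$, with $J=\{i:\ p_i\not\subset N_0\}$. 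Your formula $\pi^*N_0\cong\tilde N(-\sum_J w_i)$, with $J$ the set where $\tilde N$ misses $\pi^*p_i$, gives $2\deg N_0=\deg N-2$. The two stability numbers then differ by $2k$, and no contradiction follows.

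With the correct descent, put $k=\#J$, and let $\epsilon=1$ if $p_t\not\subset N_0$ and $\epsilon=-1$ otherwise. Then $\deg N=2\deg N_0+2-k$, and $p'_j\subset N\iff p_t\subset N_0$ for both $j$. This gives
\[
{\rm Stab}_{{\bf a}_c}(N)=-2\deg N+\epsilon=2\Big(-2\deg N_0+k-2+\tfrac{\epsilon}{2}\Big)=2\,{\rm Stab}_{\boldsymbol\mu_c}(N_0).
\]
So a destabilizing $N$ yields a $\theta$-invariant $N_0$ with ${\rm Stab}_{\boldsymbol\mu_c}(N_0)<0$ directly. Parity only sharpens this to ${\rm Stab}_{{\bf a}_c}(N)\ge 1$ for every $\tilde\iota$- and $\theta'$-invariant $N$.

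Three smaller points:
\begin{itemize}
\item Your dismissal of $\deg N\ge 2$ is not a valid argument, since for $\theta'=0$ every subbundle is invariant. It is also unnecessary, because uniqueness of a positive-degree subbundle handles all $\deg N\ge1$.
\item The sentence in Step 1 about $\deg N=0$ with $N\neq\iota^*N$ is self-contradictory and should simply defer to Step 3.
\item The fallback is not needed.
\end{itemize}

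The paper argues differently. An unstable image has an invariant subbundle, hence holomorphic determinant, so $\det\theta$ is holomorphic at $t$. Since $\underline{\Phi}$ preserves semistability of parabolic bundles (Nestor's theorem), $(E,{\bf p})$ must be one of the unstable parabolic bundles of [FL2, Table 1]. The explicit form of $\theta$ there is then used for a contradiction.

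Your route needs neither the classification nor Nestor's result; it reproves the latter at $\theta=0$. It is also uniform across cases. The identity ${\rm Stab}_{{\bf a}_c}(N)=2\,{\rm Stab}_{\boldsymbol\mu_c}(N_0)$ explains why strictly semistable images can only come from non-$\tilde\iota$-invariant subbundles, which is consistent with the paper's branch-locus description.

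Your route is also more robust at the paper's final step. On the first bundle of [FL2, Table 1], $\beta$ is a section of $\omega_{\P^1}(0+1+\lambda)\cong\mathcal O_{\P^1}(1)$ and may vanish at $t$. For example, $\beta=\frac{(x-t)\,dx}{x(x-1)(x-\lambda)}$ and $\gamma=\frac{dx}{x-t}$ give a stable Higgs bundle whose $\det\theta=-\beta\gamma$ is holomorphic at $t$ although $\gamma\neq 0$. The paper's residue argument does not exclude such points, while yours does.
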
 
 
\proof
Aiming to get a contradiction, let us suppose that $(E, {\bf p}, \theta)$ is $\boldsymbol\mu_c$-semistable and $\Phi(E, {\bf p}, \theta)$ is ${\bf a}_c$-unstable.

First, it follows from \cite[Propositions 3.1 and 3.2]{FL1} that $\Phi(E, {\bf p}, \theta)$ has holomorphic determinant, i.e. it lies in $\Gamma(\omega_c^{\otimes 2})$. In particular, this implies that 
\[
\det(\theta) \in \Gamma(\omega_{\P^1}^{\otimes 2}(0+1+\lambda+\infty))
\] 
which means that $\det(\theta)$ is holomorphic at $t$.   

Now note that the underlying parabolic vector bundle associated to $\Phi(E, {\bf p}, \theta)$ is ${\bf a}_c$-unstable, and then one obtains  that $(E, {\bf p})$ is $\boldsymbol\mu_c$-unstable, because the map $\Phi: \mathcal B_{\boldsymbol\mu_c}(\P^1, \Lambda) \to \mathcal B_{{\bf a}_c}(C, D)$ is a morphism by restricting to parabolic vector bundles \cite[Theorem 6.4]{Nestor}.  This yields a Higgs bundle $(E, {\bf p}, \theta)$ in $\mathcal H(\P^1)$ with $\boldsymbol\mu_c$-unstable parabolic underlying vector bundle. It turns out that they were classified in \cite[Corollary 3.2]{FL2}, which gives three possibilities for $(E, {\bf p})$ shown in \cite[Table 1]{FL2}. They are all decomposable as parabolic vector bundles, let us assume we are in the first case:  $E=L_1\oplus L_2$, $L_i\simeq \mathcal O_{\P^1}$, $L_1$ contains $3$ parabolic directions $p_0$, $p_1$ and $p_{\lambda}$, over $0, 1$ and $\lambda$, and $L_2$ contains $2$ parabolic directions $p_t$ and $p_{\infty}$, over $\infty$ and  $t$. The other cases are similar, so we do not detail.  Any $\boldsymbol\mu_c$-semistable  Higgs field on it is of the form  (see \cite[Remark 3.3]{FL2})
\begin{eqnarray*}
\theta=\left(
\begin{array}{ccc} 
0 & \beta  \\
\gamma & 0 \\
\end{array}
\right)
\end{eqnarray*}
with 
\begin{displaymath}
\left\{ \begin{array}{ll}
\beta: \mathcal O_{\P^1} \to \omega_{\P^1}(0+1+\lambda)\\
\gamma: \mathcal O_{\P^1}\to \omega_{\P^1}(t+\infty)\;, \quad \gamma\neq 0\,.
\end{array} \right.
\end{displaymath}

Finally, since $\gamma$ is nonzero we obtain that $\det(\theta)$ cannot be holomorphic at $t$ and this gives a contradiction. 
\endproof

 \subsection{The $16$ lines of $\mathcal S$ go to $8$ lines of $\P^1\times\P^1$}\label{section:lines} 
 
On the $\P^1$ side, the moduli space $\mathcal S=\mathcal B_{\boldsymbol\mu_c}(\P^1, \Lambda)$ of parabolic vector bundles (see Section~\ref{section:recap}) is a del Pezzo surface of degree four. Apart from $\mathcal S$,  the nilpotent cone of $\mathcal H({\P^1})$ contains other $16$ components, which are supported over the $16$ rational curves of $\mathcal S$ having $(-1)$-self intersection \cite[Theorem 4.5]{FL2}. We will see (Propositon~\ref{prop:8lines}) that the image of these lines by $\Phi$ consists of $8$ lines, and the components of the nilpotent cone of $\mathcal H(C)$, distinct of $\Phi(\mathcal S)$, are supported on these lines. 
 
To begin with, let us consider the moduli space of ${\bf a}$--semistable  rank two parabolic vector bundles over $(C, D)$, $D=t_1+t_2$, having trivial determinant. This moduli space  is isomorphic to $\P^1\times\P^1$
\[
 \mathcal B_{{\bf a}}(C, D)\simeq\P^1_{z_1}\times\P^1_{z_2}
\]
see \cite[Theorem A]{Nestor}. We let ${\bf a}_c=(\frac{1}{2}, \frac{1}{2})$ be  the central weight. Using the above identification,  we simply refer to $ \mathcal B_{{\bf a}_c}(C, D)$ as $\P^1_{z_1}\times\P^1_{z_2}$.  Before moving on, we recall briefly how to get these coordinates. First, the moduli space $Bun(C,\mathcal O_C)$ of semistable rank two vector bundles over $C$  having trivial determinant is isomorphic to $\P^1$. For instance, any vector bundle on $C$ is either of the form $L\oplus L^{-1}$ with $L$ of degree $0$, or $E_0\otimes L_i$ for $i\in \{0, 1, \lambda, \infty\}$. Each  indecomposable extension  $E_0\otimes L_i$ identifies with each decomposable bundle $L_i\oplus L_i$ via $S$-equivalence, so the corresponding moduli space is the quotient of the Jacobian variety ${\rm Pic}^0 C$, which is isomorphic to $C$, by the involution $L\mapsto L^{-1}$, which turns out to be isomorphic to $\mathbb P^1$.
The projection in the first factor  
\[
\mathcal B_{{\bf a}_c}(C, D)\to \P^1_{z_1}
\] 
is just the forgetful map, which forgets the parabolic structure.  The map on the second factor is the forgetful map after applying an elementary transformation 
\begin{eqnarray}\label{elemtranst}
\elem_{t_1,t_2} := [\otimes \mathcal O_C(w_{\infty})]\circ \elem_D
\end{eqnarray}
over $t_1, t_2$.

\begin{remark}\rm\label{rmk:8lines}
Another distinguished locus of  $\mathcal B_{{\bf a}_c}(C, D)$  is the union of {\it $4$ vertical lines} 
\[
\frak v_i = \{(z_1, z_2) \in \P^1_{z_1}\times\P^1_{z_2}\;:\;\; z_1=i\}
\]
and {\it $4$ horizontal lines} 
\[
\frak h_i = \{(z_1, z_2) \in \P^1_{z_1}\times\P^1_{z_2}\;:\;\; z_2=i\}
\]
for  $i\in \{0, 1, \lambda, \infty\}$. Using coordinates introduced above, we see that $\elem_{t_1,t_2}$ sends horizontal lines to vertical lines and vice-versa. More precisely,  a general element of $\frak v_i$ is represented by $E_0\otimes L_i$ with $p_{t_1}$ and $p_{t_2}$ outside $L_i\subset E_0\otimes L_i$. For the horizontal line $\frak h_i$, it  can be obtained from the previous vertical line by applying $\elem_{t_1,t_2}$. A general element on it is of the form $L\oplus L^{-1}$, $\deg L = 0$, and with each parabolic directions lying in the same embedding of $\mathcal O_C(-w_i)\subset L\oplus L^{-1}$. This embedding is the transformed of $L_i\subset E_0\otimes L_i$ under the elementary transformation. 
\terminou
\end{remark}

\begin{prop}\label{prop:8lines}
The map $\Phi: \mathcal S\to \P^1_{z_1}\times\P^1_{z_2}$ sends the $16$ special rational curves of $\mathcal S$ to $8$ lines of  $\P^1_{z_1}\times\P^1_{z_2}$: $4$ vertical lines $\frak v_i$ and  $4$ horizontal lines $\frak h_i$. 
\end{prop}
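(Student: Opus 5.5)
The plan is to use the modular description of the $16$ lines of $\mathcal S$ from \cite[Table 2]{FL2}, compute the image of a general point of each line under $\Phi=\elem_{R,\mathcal O_C(2w_\infty)}\circ\pi^*$, and identify that image with one of the lines $\frak v_i$ or $\frak h_i$ of Remark~\ref{rmk:8lines}.

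We first sort the $16$ lines into types. For the central weight on $(\P^1,\Lambda)$ they are:
\begin{itemize}
\item the $5$ lines where $E=\mathcal O\oplus\mathcal O$ and two parabolics coincide, $p_i=p_j$;
\item the $10$ lines where $E=\mathcal O(-1)\oplus\mathcal O(1)$ and $\mathcal O(1)$ contains...
\end{itemize}
More uniformly, each of the $16$ lines is characterised by a subbundle $L\subset E$ of degree $d$ containing a set $J\subset\{0,1,\lambda,t,\infty\}$ of parabolic directions with $\mathrm{Stab}_{\boldsymbol\mu_c}(L)=0$, i.e. $-2d=|J|-(5-|J|)$, up to reversing the role of $J$. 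The cases are $d=0,|J|=\ldots$; in practice one takes $d=-1$ with $|J|=\ldots$, etc. We then split the $16$ lines according to whether $t\in J$ or $t\notin J$.

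Next we pull back. Apply $\pi^*$ and then $\elem_R$ (twisted by $\mathcal O_C(2w_\infty)$), tracking the subbundle $L$ via the rule recalled in Section~\ref{section:basic}: the transform of a line subbundle is $L$ if it contains the parabolic and $L(-w_i)$ otherwise. This yields an explicit line subbundle $L'\subset\Phi(E)$ of degree $2d-\#\{i\in R: i\notin J\}+2$ that contains $p_1,p_2$ iff $t\in J$. Its degree and the torsion class of $L'$ are read off from $\pi^*\mathcal O(d)=\mathcal O_C(2dw_\infty)$ and $w_i-w_\infty$ being $2$-torsion. Because $\mathrm{Stab}$ vanishes for $L$, and pullback doubles it, $L'$ is a subbundle with $\mathrm{Stab}_{{\bf a}_c}(L')=0$ of a special torsion type $\mathcal O_C(-w_i)$ or $L_i$.

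Comparing with Remark~\ref{rmk:8lines}:
\begin{itemize}
\item When $t\in J$, $L'$ is a degree $0$ or $-1$ subbundle through both parabolics, of the form $\mathcal O_C(-w_i)$ after twisting. This gives the horizontal line $\frak h_i$.
\item When $t\notin J$, the parabolics lie outside a maximal subbundle $L_i$ of $E_0\otimes L_i$ (or $L_i\oplus L_i$). This gives the vertical line $\frak v_i$.
\end{itemize}
The index $i\in\{0,1,\lambda,\infty\}$ is determined by the parity pattern of $J\cap\{0,1,\lambda,\infty\}$. Since $\Phi$ is non-constant on each line (the degree-two map $\mathcal S\to\P^1\times\P^1$ is finite), each line maps onto a whole $\frak v_i$ or $\frak h_i$. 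Counting gives $16$ lines onto $8$, two over each, consistently with the involution $\elem_{0,1,\lambda,\infty}$ pairing the lines.

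The main obstacle is the bookkeeping of which torsion line bundle $L_i$ arises for each $J$, and checking that the image really lies in the special line rather than being a degenerate $S$-equivalence class. To handle the torsion classes, I would use the relations $\mathcal O_C(w_i)\otimes\mathcal O_C(w_j)\simeq\mathcal O_C(w_k+w_\infty)$ for $\{i,j,k\}=\{0,1,\lambda\}$. For the second check, note that on these lines $\Phi$ agrees with Nestor's morphism \cite[Theorem 6.4]{Nestor}, so images of general points suffice.
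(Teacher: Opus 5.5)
Your strategy is the paper's: take a general point of each of the $16$ lines, follow a distinguished line subbundle through $\pi^*$, $\elem_R$ and the twist by $\mathcal O_C(2w_\infty)$, and compare with the description of $\mathfrak v_i,\mathfrak h_i$. The paper does this explicitly only for $p_0,p_\lambda\subset\mathcal O_{\P^1}$, obtaining $L_1$ and hence $\mathfrak v_1$. It then pairs lines via $\elem_{0,1,\lambda,\infty}$ and calls the remaining cases analogous. Your degree formula $\deg L'=2d-\#(R\setminus J)+2$ is correct, and so is your final criterion ($t\in J$ gives a horizontal line, $t\notin J$ a vertical one). However, the bookkeeping on which your uniform argument rests is wrong in three places. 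As written, it does not prove the statement.

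First, the list of lines is wrong. The correct list is:
\begin{enumerate}
\item $10$ lines with $E=\mathcal O\oplus\mathcal O$ and $p_i,p_j$ on a common constant section;
\item $5$ lines with $E=\mathcal O\oplus\mathcal O$ and four parabolic directions on one embedding $\mathcal O(-1)\hookrightarrow E$;
\item a single line with $E=\mathcal O(-1)\oplus\mathcal O(1)$ and no parabolic direction on $\mathcal O(1)$.
\end{enumerate}
This is $10+5+1$, not $5+10$.

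Second, the condition $\mathrm{Stab}_{\boldsymbol\mu_c}(L)=0$ has no solutions. Indeed $\mathrm{Stab}_{\boldsymbol\mu_c}(L)=-2d+\tfrac52-|J|$ is a half-integer, and every point of $\mathcal S$ is stable because $\boldsymbol\mu_c$ lies in a chamber. The $16$ lines are the loci where some $L$ has $\mathrm{Stab}_{\boldsymbol\mu_c}(L)=\tfrac12$. Equivalently $|J|=2-2d$ with $(d,|J|)\in\{(0,2),(-1,4),(1,0)\}$, which recovers $10+5+1$.

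Third, and most seriously, $L'$ does not satisfy $\mathrm{Stab}_{{\bf a}_c}(L')=0$. If it did, the image of every line would consist of strictly semistable points, so it would lie in $C^0$. But $C^0$ is an irreducible $(2,2)$-curve isomorphic to $C$ and contains none of the $8$ lines, so this step contradicts what you want to prove. Your doubling heuristic is right, but it doubles $\tfrac12$, not $0$. Plugging $|J|=2-2d$ into your degree formula gives $\deg L'=0$ if $t\notin J$ and $\deg L'=-1$ if $t\in J$. In the second case the degree is never $0$: a degree $0$ subbundle through both parabolic directions is ${\bf a}_c$-destabilizing, cf. the table of the $8$ unstable parabolic bundles. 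In both cases $\mathrm{Stab}_{{\bf a}_c}(L')=1$.

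With these corrections your argument closes cleanly and uniformly.
\begin{enumerate}
\item For $t\notin J$: $L'=\mathcal O_C\big((2+2d)w_\infty-\sum_{k\in R\setminus J}w_k\big)$ is a $2$-torsion bundle $L_i$. So $\Phi(E)$ is an extension of $L_i$ by $L_i$, and its image under the forgetful map to $\P^1_{z_1}$ is $i$ whether or not the extension splits. This spares you the paper's unproved assertion that $L_1$ is the unique maximal subbundle.
\item For $t\in J$: $L'\simeq\mathcal O_C(-w_j)$ contains both parabolic directions, so $\elem_{t_1,t_2}$ turns it into $L_j$ and $z_2=j$.
\end{enumerate}
The $t\notin J$ lines ($6+1+1$) go to vertical lines and the $t\in J$ lines ($4+4$) to horizontal ones, two per line. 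The pairing by $\elem_{0,1,\lambda,\infty}$ then comes out of the computation rather than being an input.

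If you carry this out, you will find the six lines $p_i,p_j\subset\mathcal O$ with $i,j\neq t$ going to $L_0,L_1,L_\lambda$. The two lines with $d=\pm1$ and $t\notin J$ go to $L_\infty=\mathcal O_C$. The paper's proof writes these indices with $0$ and $\infty$ interchanged, which does not affect the statement.
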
 
 
\proof 
We start with one of the $16$ special lines of $\mathcal S$. Let $l\subset \mathcal S$ be the rational curve formed by  $(E, {\bf p})$ where $E=\mathcal O_{\P^1}\oplus\mathcal O_{\P^1}$ and  $p_0, p_{\lambda}\subset \mathcal O_{\P^1}$, see \cite[Table 2]{FL2}. In addition, we may take a general element, that is,  assume $p_0$, $p_1, p_{\infty}$ and $p_t$ do not lie in the same embedding of $\mathcal O_{\P^1}$. We now determine its image by $\Phi$. Taking the pullback via $\pi: C\to \P^1$, we get   $\pi^*E=\mathcal O_{C}\oplus\mathcal O_{C}$ with two parabolic directions $p_{w_0}, p_{w_{\lambda}}$ lying in the same embedding of $\mathcal O_C$. After elementary transformation $\elem_R$ over $w_0, w_1, w_{\lambda}, w_{\infty}$, this embedding becomes $\mathcal O_C(-w_1-w_{\infty})$ and the determinant of the transformed $E'$ of $E$ is $\mathcal O_C(-4w_{\infty})$. In order to redress the determinant, one twists $E'$ by $\mathcal O_C(2w_{\infty})$  and  $\mathcal O_C(-w_1-w_{\infty})$ becomes $L_1=\mathcal O_C(w_{\infty}-w_1)$. Therefore, we get 
\[
\Phi(E, {\bf p}) = (\tilde{E}, \{\tilde{p}_{t_1}, \tilde{p}_{t_2}\})
\]
where $\tilde{E}$ contains $L_1$ as unique maximal subbundle, i.e. $\tilde{E} = E_0\otimes L_1$.   Parabolic directions over $t_1$, $t_2$ are outside $L_1$.  This shows that $\Phi$ sends $l$ to the vertical line $\frak v_1$. 

The map $\Phi$ has degree two and has $\elem_{0,1,\lambda,\infty}$ (c.f. Section~\ref{section:recap}) as nontrivial involution
\[
\Phi\circ \elem_{0,1,\lambda,\infty} = \Phi
\]
and the reader can check that $l$ and the rational curve formed by $(E, {\bf p})$ where $E=\mathcal O_{\P^1}\oplus\mathcal O_{\P^1}$ and  $p_1, p_{\infty}\subset \mathcal O_{\P^1}$ have the same image under $\Phi$.

Analogously, we see that the $6$ rational curves in  \cite[Table 2]{FL2} given by conditions $p_i,p_j \subset \mathcal O_{\P^1}\subset \mathcal O_{\P^1}\oplus\mathcal O_{\P^1}$ with $i,j\in \{0,1,\lambda, \infty\}$ are sent to the  $3$ vertical lines $\frak v_i$, $i\in\{1,\lambda, \infty\}$. And the two rational curves of  \cite[Table 2]{FL2} given by conditions 
\begin{enumerate}
\item $E=\mathcal O_{\P^1}\oplus\mathcal O_{\P^1}$ and $p_0, p_1, p_{\lambda}, p_{\infty}\subset \mathcal O_{\P^1}(-1)\subset \mathcal O_{\P^1}\oplus\mathcal O_{\P^1}$; and 
\item $E=\mathcal O_{\P^1}(-1)\oplus\mathcal O_{\P^1}(1)$ and $p_0, p_1, p_{\lambda}, p_{\infty}, p_t\nsubseteq \mathcal O_{\P^1}(-1)$
\end{enumerate}
respectively, are sent to the last vertical line $\frak v_0$. 

The same argument above shows that $\Phi$ sends the remaining $8$ rational curves of  \cite[Table 2]{FL2} to the $4$ horizontal lines $\frak h_i$,  $i\in\{1,\lambda, \infty\}$. 
 \endproof

 \section{Strictly semistable parabolic vector bundles}\label{section:stric}
 
 \subsection{The locus of strictly semistable vector bundles}
Following \cite[Theorem 4.6]{Nestor}, there is a special locus $C^0$ in $\mathcal B_{{\bf a}_c}(C,D)\simeq\P^1_{z_1}\times\P^1_{z_2}$ formed by strictly semistable parabolic vector bundles, it is a $(2,2)$ curve which is a copy of $C$. For a general element $\mathcal L$ of $C^0$ there is a unique line bundle  $L$ of degree zero such that $\mathcal L$ represents a class of $S$-equivalent parabolic vector bundles having three representatives 
\begin{enumerate}
\item\label{gamma1}  $L\oplus L^{-1}$ with $p_{t_1}$ in $L$ and $p_{t_2}$ outside $L$ and $L^{-1}$;
\item\label{gamma2}  $L\oplus L^{-1}$ with $p_{t_2}$ in $L^{-1}$ and $p_{t_1}$ outside $L$ and $L^{-1}$;
\item\label{gamma3}  $L\oplus L^{-1}$ with $p_{t_1}$ in $L$ and $p_{t_2}$ in $L^{-1}$.
\end{enumerate}
When $L$ is a torsion line bundle, i.e $L=L_i$ then the three cases above are replaced by 
\begin{itemize}
\item[(1')] $E_0\otimes L_i$ with $p_{t_1}$ in $L_i$ and $p_{t_2}$ outside $L_i$; 
\item[(2')] $E_0\otimes L_i$ with $p_{t_2}$ in $L_i$ and $p_{t_1}$ outside $L_i$; 
\item[(3')] $L_i\oplus L_i$ with $p_{t_1}$ and $p_{t_2}$ not lying in the same embedding of  $L_i$.
\end{itemize}
See \cite[Theorem 4.6]{Nestor} for details. 

We refer to the class $\mathcal L$ having representatives  (\ref{gamma1}), (\ref{gamma2}) and (\ref{gamma3}) above, as non-torsion points of $C^0$. Similarly, classes having (\ref{gamma1}'), (\ref{gamma2}') and (\ref{gamma3}') are torsion points of $C^0$. In the next two results, we study the restriction of the nilpotent cone to points of $\mathcal H(C)$ over $C^0$. 

\begin{prop}\label{prop:nothingbut}
Let $(E, {\bf p}, \theta)\in \mathcal H(C)$ be an element having a non-torsion point of $C^0$ as underlying parabolic vector bundle and assume $\det(\theta) = 0$. Then  $(E, {\bf p}, \theta)$ is $S$-equivalent to the vanishing Higgs field $(E, {\bf p}, 0)$. 

\end{prop}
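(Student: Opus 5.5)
We will work with a representative of the $S$-equivalence class and prove that the Higgs field preserves a line subbundle $N$ with ${\rm Stab}_{{\bf a}_c}(N)=0$. Its associated graded object will then be $N\oplus E/N$ with the induced diagonal Higgs field. We then show that this diagonal field is zero.

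Since the underlying parabolic bundle is a non-torsion point of $C^0$, we may take $E=L\oplus L^{-1}$ with $\deg L=0$ and $L^2\neq\mathcal O_C$. By the classification recalled above, the parabolic structure is one of the three representatives (\ref{gamma1}), (\ref{gamma2}), (\ref{gamma3}). We first treat representative (\ref{gamma3}), where $p_{t_1}\subset L$ and $p_{t_2}\subset L^{-1}$. The other two should reduce to it, or be handled by the same computation; see the last paragraph. Write
\[
\theta=\begin{pmatrix}\alpha&\beta\\ \gamma&-\alpha\end{pmatrix},\qquad \alpha\in\Gamma(\mathcal O_C(t_1+t_2)),\ \beta\in\Gamma(L^2(t_1+t_2)),\ \gamma\in\Gamma(L^{-2}(t_1+t_2)),
\]
as in the proof of Proposition~\ref{prop:bundC}. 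The residue at $t_1$ kills $p_{t_1}=L_{t_1}$ and has image in $L_{t_1}$, so $\gamma$ and $\alpha$ vanish as sections with poles at $t_1$, i.e. they have no pole there. Symmetrically, at $t_2$ the entries $\beta$ and $\alpha$ have no pole. Hence $\alpha\in\Gamma(\mathcal O_C)$ is a constant $a$, $\beta\in\Gamma(L^2(t_1))$ and $\gamma\in\Gamma(L^{-2}(t_2))$. Both of the latter spaces are one-dimensional, since $L^{\pm2}(t_i)$ has degree one.

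We now compute $\det\theta=-a^2-\beta\gamma$. Since $\beta\gamma$ is a section of $\mathcal O_C(t_1+t_2)$ and $\det\theta=0$, it must be the constant $-a^2$. If $a\neq0$, then $\beta$ and $\gamma$ would be nowhere vanishing. A nonzero section of $L^2(t_1)$ vanishes at exactly one point $x$, so $L^2(t_1)=\mathcal O_C(x)$; with $\beta$ regarded as a section of $L^2(t_1+t_2)$, it vanishes at $x+t_2$. Likewise $\gamma$ vanishes at $y+t_1$. For $\beta\gamma$ to be a nonzero constant in $\Gamma(\mathcal O_C(t_1+t_2))$, we need $x+y+t_1+t_2=t_1+t_2$, which is impossible. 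Hence $a=0$ and $\beta\gamma=0$. So either $\beta=0$, and $L^{-1}$ is invariant, or $\gamma=0$, and $L$ is invariant. Both subbundles have degree $0$ and contain exactly one parabolic direction, so ${\rm Stab}=0$. The graded object is $L\oplus L^{-1}$ with the diagonal part of $\theta$, which is $\pm\alpha=0$. This is $(E,{\bf p},0)$, as required.

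For representatives (\ref{gamma1}) and (\ref{gamma2}), I would redo the same analysis in a frame adapted to $L\oplus L^{-1}$, with one parabolic direction generic. Taking (\ref{gamma1}), where $p_{t_1}\subset L$, we get $\alpha$ and $\gamma$ holomorphic at $t_1$, and the nilpotence condition at $t_2$ gives a single linear relation on the residues. Alternatively, since all three representatives are $S$-equivalent, I could try to argue that a semistable Higgs bundle on (\ref{gamma1}) has a graded object living on (\ref{gamma3}). The main obstacle is exactly this point. One must show that $\theta$ on (\ref{gamma1}) still leaves $L$ or $L^{-1}$ invariant, and that the induced diagonal part vanishes. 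This requires using $L^2\neq\mathcal O_C$, so that $\Gamma(L^{\pm2})=0$, together with the single nilpotence constraint at $t_2$, and checking that $\det\theta=0$ then forces $\beta\gamma=0$ and $\alpha=0$.
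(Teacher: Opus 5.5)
\textbf{The gap.} The step that fails is the divisor count for representative (3).

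Take $\beta\in\Gamma(L^2(t_1))$ vanishing at $x$ and $\gamma\in\Gamma(L^{-2}(t_2))$ vanishing at $y$. Then $\beta\gamma$ is a section of $\mathcal O_C(t_1+t_2)$ with divisor $x+y$. A nonzero constant, viewed in $\Gamma(\mathcal O_C(t_1+t_2))$, has divisor $t_1+t_2$. If you prefer to work in $\mathcal O_C(2t_1+2t_2)$, the extra $t_1+t_2$ appears on both sides, not only on the left. (Also, $\beta$ and $\gamma$ are not nowhere vanishing: they are sections of degree-one bundles.)

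So the correct condition is $x+y=t_1+t_2$. Since $L^2\neq\mathcal O_C$ forces $x\neq t_1$, this holds exactly when $x=t_2$ and $y=t_1$, i.e.\ when $L^2\simeq\mathcal O_C(t_2-t_1)$. For every other $L$, your argument, once corrected, does give $a=0$, $\beta\gamma=0$ and the conclusion.

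\textbf{The step cannot be repaired at the four special $L$, because the statement fails there.} Such $L$ is not $2$-torsion (as $t_1\neq t_2$). So $(L\oplus L^{-1},\,p_{t_1}=L|_{t_1},\,p_{t_2}=L^{-1}|_{t_2})$ is representative (3) of a non-torsion point of $C^0$.
\begin{itemize}
\item Take nonzero $\beta\in\Gamma(L^2\omega_C(t_1))$ and $\gamma\in\Gamma(L^{-2}\omega_C(t_2))$. The product $\beta\gamma\in\Gamma(\omega_C^{\otimes 2}(t_1+t_2))$ has divisor exactly $t_1+t_2$, hence equals $c\,(dx/y)^{\otimes 2}$ with $c\neq 0$.
\item Choosing $\alpha=a_0\,dx/y$ with $a_0^2=-c$ gives a nonzero $\theta$, nilpotent with respect to both parabolic directions, with $\det\theta=0$.
\item Since $\theta^2=0$, the only invariant line subbundle is $\ker\theta$. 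It is neither $L$ nor $L^{-1}$, because $\beta\gamma\neq 0$.
\item $L$ and $L^{-1}$ are the only degree-zero line subbundles of $E$. Hence $\deg\ker\theta\le -1$ and ${\rm Stab}_{{\bf a}_c}(\ker\theta)\ge 1$.
\end{itemize}
So $(E,{\bf p},\theta)$ is stable and not $S$-equivalent to $(E,{\bf p},0)$. These fields are exactly the $\elem_{t_1,t_2}$-transforms of the exceptional nilpotent fields the paper finds over the torsion points of $C^0$.

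\textbf{Comparison with the paper.} The paper's proof has the same blind spot. It writes ${\rm Res}(\det\theta,t_2)={\rm Res}(\gamma,t_2)$, but the polar part of $\beta\gamma$ at $t_2$ is $\beta(t_2)\,{\rm Res}(\gamma,t_2)$. Here $\beta(t_2)=0$ precisely when $L^2(t_1)\simeq\mathcal O_C(t_2)$. So the proposition has to exclude these four points, for representative (3).

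\textbf{Cases (1) and (2).} In your proposal these are only a plan, but they are required, since the statement concerns every representative. They are the easy cases, and they are what the paper's argument genuinely covers. Suppose $p_{t_1}\subset L$ and $p_{t_2}$ lies outside $L$ and $L^{-1}$.
\begin{itemize}
\item Then $\alpha$ is holomorphic.
\item The residue at $t_2$ is antidiagonal and must kill a vector with both coordinates nonzero, so both residues vanish.
\item Hence $\gamma\in\Gamma(L^{-2})=0$, and $\det\theta=0$ forces $\alpha=0$.
\item Therefore $L$ is invariant, with ${\rm Stab}_{{\bf a}_c}(L)=0$ and zero graded Higgs field.
\end{itemize}
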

 
\proof
By hypothesis  $(E, {\bf p}) = (L\oplus L^{-1}, {\bf p})$, $L$ non-torsion and  at least one parabolic direction lies either in $L$ or $L^{-1}$. We may assume that $p_{t_1}$ lies in $L$, the other case is completely analogous. 

Over the point $t_1$,   the  Higgs field 
\begin{eqnarray*}
\theta=\left(
\begin{array}{ccc} 
\alpha & \beta  \\
\gamma & -\alpha  \\
\end{array}
\right)
\end{eqnarray*}
is nilpotent with respect to the direction corresponding to  $(1,0)$,  hence residues of $\alpha$ and $\gamma$ are vanishing at $t_1$, i.e., $\theta$ is defined by homomorphisms
\begin{displaymath}
\left\{ \begin{array}{ll}
\alpha: \mathcal O_C \to \omega_{C}(t_2)\\
\beta: L^{-1} \to L\otimes\omega_c(t_1+t_2)\\
\gamma: L \to L^{-1}\otimes\omega_c(t_2)
\end{array} \right. 
\end{displaymath}
In particular, $\alpha$ is a global regular $1$-form, because $\Gamma(\omega_c(t_2)) = \Gamma(\omega_c)$, and then
\begin{eqnarray*}
{\rm Res}(\theta, t_2)=\left(
\begin{array}{ccc} 
0 & {\rm Res}(\beta, t_2)  \\
{\rm Res}(\gamma, t_2) & 0 \\
\end{array}
\right).
\end{eqnarray*}

We will show that $\det(\theta) = 0$ gives $\alpha = \gamma = 0$. This last implies that $\theta$ is $S$-equivalent to the vanishing Higgs field and concludes the proof of the proposition. To do this, first note that since $p_{t_2}$ do not lie in $L$ 
\[
{\rm Res}(\theta, t_2)\cdot p_{t_2}=0
\]
then ${\rm Res}(\beta, t_2)=0$. Putting all together, we conclude that residue of $\det\theta$ at $t_2$ depends only on $\gamma$ 
\[
{\rm Res}(\det(\theta), t_2) = {\rm Res}(\gamma, t_2) 
\]
and then if $\det(\theta)$ is regular, we get $ {\rm Res}(\gamma, t_2) = 0$. This last implies that $\gamma=0$ because $\Gamma(L^{-2})$ vanishes ($L$ is non-torsion), and if $\det(\theta) = 0$ we also have $\alpha = 0$.

\endproof 

The next result deals with torsion points of $C^0$. 
 
 \begin{prop}\label{prop:fourtorsion}
Let $(E, {\bf p}, \theta)\in \mathcal H(C)$ be an element having a torsion point of $C^0$ as underlying parabolic vector bundle and assume $\det(\theta) = 0$. Then  either $(E, {\bf p}, \theta)$ is $S$-equivalent to the vanishing Higgs field $(E, {\bf p}, 0)$ or $E = L_i\oplus L_i$ with  $p_{t_1}$ and $p_{t_2}$ not lying in the same embedding of  $L_i$ and  
\begin{eqnarray*}
\theta=\left(
\begin{array}{ccc} 
\alpha_0 & \beta_0  \\
1 & -\alpha_0  \\
\end{array}
\right)\frac{dx}{y}
\end{eqnarray*}
with $\alpha_0\in \C$, $\beta_0\in \C^*$,  satisfying $\alpha_0^2+\beta_0= 0$.
 \end{prop}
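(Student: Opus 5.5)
We go through the three representatives (1'), (2'), (3') of a torsion point of $C^0$, imitating the proof of Proposition~\ref{prop:nothingbut}. The difference now is that $\Gamma(L_i^{\pm 2})=\Gamma(\mathcal O_C)\neq 0$, so the off-diagonal entries no longer have to vanish.

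\textbf{Cases (1') and (2').} Take $E=E_0\otimes L_i$ with $p_{t_1}\subset L_i$ and $p_{t_2}$ outside $L_i$; case (2') is symmetric. First we show that $L_i$ is $\theta$-invariant. Write $\theta$ in the filtration $L_i\subset E\to L_i$. The lower-left component is a map $\gamma: L_i\to L_i\otimes\omega_C(D)$, i.e.\ a section of $\mathcal O_C(t_1+t_2)$ twisted by $\frac{dx}{y}$.
\begin{itemize}
\item Nilpotence at $t_1$, where $p_{t_1}\subset L_i$, kills the residue of $\gamma$ at $t_1$, so $\gamma$ is a constant.
\item Since $\det\theta=0$, the argument of Proposition~\ref{prop:nothingbut} applies: the residue at $t_2$ of the quadratic differential forces the residue of $\gamma$ at $t_2$ to vanish.
\item The extension $E_0$ is nonsplit, so the constant part of $\gamma$ is not an obstruction a priori. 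Instead use $\det\theta=0$ globally: $\theta$ is nilpotent, so its kernel is a line subbundle $N\subset E$.
\end{itemize}
On $E_0\otimes L_i$, every degree-$0$ subbundle is $L_i$ itself, and all other line subbundles have negative degree. If $N=L_i$, then $L_i$ is invariant, and the graded object is $L_i\oplus L_i$ with restricted Higgs fields. These are traceless and nilpotent on line bundles, hence zero. So $\theta$ is $S$-equivalent to $(E,{\bf p},0)$. If $\deg N<0$, then $\theta(E)\subset N\otimes\omega_C(D)$, and semistability with ${\rm Stab}$ computed for $N$ gives a contradiction (or forces $\theta=0$) by a degree count. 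We expect this degree count to be the main obstacle, because of the weight contributions. An alternative route is to apply $\elem_{t_1,t_2}$, which sends this case to a decomposable bundle as in Proposition~\ref{prop:bundC}, and run the computation there.

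\textbf{Case (3').} Take $E=L_i\oplus L_i$, choosing embeddings so that $p_{t_1}$ is the first factor and $p_{t_2}$ is the diagonal. Write $\theta=\begin{pmatrix}\alpha&\beta\\ \gamma&-\alpha\end{pmatrix}$ with entries in $\Gamma(\mathcal O_C(t_1+t_2))\frac{dx}{y}$, and impose nilpotence at both points.
\begin{itemize}
\item Nilpotence at $t_1$ gives ${\rm Res}_{t_1}\alpha={\rm Res}_{t_1}\gamma=0$.
\item Nilpotence at $t_2$ along $(1,1)$ then gives linear relations on the residues at $t_2$.
\item The residue theorem, applied entrywise, forces all residues to vanish.
\end{itemize}
Hence $\alpha,\beta,\gamma$ are constants $\alpha_0,\beta_0,\gamma_0$ times $\frac{dx}{y}$, and $\det\theta=0$ reads $\alpha_0^2+\beta_0\gamma_0=0$.

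If $\gamma_0=0$, then $\alpha_0=0$. The first factor, which contains $p_{t_1}$, is then invariant with ${\rm Stab}=0$, so $\theta$ is $S$-equivalent to $0$. The same happens if $\theta$ preserves any embedding of $L_i$ through a parabolic direction. Otherwise $\gamma_0\neq0$, and rescaling the second factor by an automorphism of $E$ gives $\gamma_0=1$, so $\beta_0=-\alpha_0^2$. Finally we must check $\beta_0\neq0$, i.e.\ $\alpha_0\neq 0$. If $\alpha_0=0$, the kernel of $\theta$ is the second factor, which is invariant; one checks whether it contains $p_{t_2}$ and concludes $S$-equivalence to zero or instability. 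This yields the stated normal form.
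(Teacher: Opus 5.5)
\textbf{Cases (1') and (2').} The step that fails is ruling out a kernel $N$ of negative degree. No degree count with weights can do this. The nonzero induced map $E/N\cong N^{-1}\to N\otimes\omega_C(D)$ only gives $\deg N\ge -1$. If $\deg N=-1$, this map is a nonzero section of the degree-zero bundle $N^{2}(t_1+t_2)$, hence nowhere vanishing. So $N^{-2}\cong\mathcal O_C(t_1+t_2)$, and ${\rm Res}(\theta,t_j)$ has rank one with kernel $N_{t_j}$. Nilpotence then forces $p_{t_1},p_{t_2}\subset N$, which gives ${\rm Stab}(N)=0+2-\tfrac12-\tfrac12=1>0$. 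Since $N$ is the only $\theta$-invariant subbundle, such a Higgs field would be stable, so semistability yields no contradiction.

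What excludes this case is linear equivalence. The composite $N\to E\to E/L_i\cong L_i$ is a nonzero section of the degree-one bundle $L_iN^{-1}$, vanishing exactly where $N$ meets $L_i$. Since $N_{t_1}=p_{t_1}=(L_i)_{t_1}$, it vanishes at $t_1$ only. Hence $N\cong L_i(-t_1)$ and $N^{-2}\cong\mathcal O_C(2t_1)$, forcing $t_1\sim t_2$, which is false.

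This is exactly the input of the paper's proof. After $\elem_{t_1}$ centred at $p_{t_1}$, the bundle becomes $L_i(-t_1)\oplus L_i$, with $p'_{t_1}$ off both factors and $p_{t_2}$ on an embedding of $L_i(-t_1)$. The residue conditions then give $\alpha\in\Gamma(\omega_C)$ and $\beta\in\Gamma(\omega_C(t_2-t_1))=0$, so $L_i$ is invariant and $\det\theta=-\alpha^2$. Your fallback via $\elem_{t_1,t_2}$ would also work: it turns case (1') into a non-torsion point of $C^0$ with $L^2\cong\mathcal O_C(t_2-t_1)$, where the preceding proposition applies. But it too rests on $t_1\not\sim t_2$, and it is only sketched.

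\textbf{Case (3').} Putting $p_{t_2}$ on the diagonal breaks your final step. Rescaling the second factor is then no longer an automorphism of $(E,{\bf p})$, since it moves $p_{t_2}$. More seriously, the claim that $\alpha_0=0$ leads to $S$-equivalence to zero or instability is false in this basis. Take $p_{t_1}=[1:0]$, $p_{t_2}=[1:1]$ and $\theta=\begin{pmatrix}0&0\\1&0\end{pmatrix}\frac{dx}{y}$. The only invariant subbundle is the second factor, which contains neither parabolic direction, so its ${\rm Stab}$ is $1$ and the Higgs bundle is stable. Conversely, $\alpha_0=\gamma_0=1$, $\beta_0=-1$ has kernel the diagonal, i.e.\ $p_{t_2}$, so it is $S$-equivalent to zero although $\beta_0\neq0$. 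In your basis the correct criterion is $\gamma_0\neq0$ and $\alpha_0\neq\gamma_0$, which does not produce the stated normal form.

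The fix is the paper's normalization: $p_{t_1}$ in the first factor and $p_{t_2}$ in the second. Then the residue conditions directly give $\alpha,\beta,\gamma\in\Gamma(\omega_C)$, because $\Gamma(\omega_C(t_j))=\Gamma(\omega_C)$. The diagonal automorphisms preserve ${\bf p}$, so $\gamma_0=1$ is allowed. Finally, for $\theta\neq0$ the kernel of the nilpotent matrix avoids both coordinate axes exactly when $\beta_0\gamma_0\neq0$.
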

 
 \proof
A torsion point $(E, {\bf p})$ of $C^0$ fits in one of  cases (\ref{gamma1}'), (\ref{gamma2}') and (\ref{gamma3}') above. 

To begin with, let us assume we are in case  (\ref{gamma1}'), i.e.
$E = E_0\otimes L_i$ with $p_{t_1}$ in $L_i$ and $p_{t_2}$ outside $L_i$. Since $E$ is an indecomposable vector bundle, it is convenient to apply one elementary transformation over $t_1$ centred at the parabolic direction $p_{t_1}$. After this transformation, the subbundle $L_i$ does not change because it contains $p_{t_1}$,  and it corresponds to a section of $\mathbb P E$ of negative self-intersection.  The transformed $E'$ of $E$ is a decomposable vector bundle having determinant $\mathcal O_C(-t_1)$, i.e.
\[
E' = L_i(-t_1)\oplus L_i. 
\] 
The transformed direction $p'_{t_1}$ does not lie in $L_i(-t_1)$ (neither in $L_i$).  Otherwise,   applying an elementary transformation again we would obtain a vector bundle with two maximal subbundles, the transformed of both $L_i(-t_1)$ and $L_i$, but we should obtain $E\otimes \mathcal O_C(-t_1)$ which has a unique maximal subbundle $L_i(-t_1)$. We also note that the family of embeddings  $L_i(-t_1)\hookrightarrow  L_i(-t_1)\oplus L_i$ is one dimensional and there is an embedding of $L_i(-t_1)$ passing through $p'_{t_2}$. 

Therefore, up to elementary transformation,  we assume $E = L_i(-t_1)\oplus L_i$, $p_{t_1}$ does not lie neither in $L_i(-t_1)$, nor in $L_i$ and $p_{t_2}$ lies in $L_i(-t_1)$. Since the Higgs field
\begin{eqnarray*}
\theta=\left(
\begin{array}{ccc} 
\alpha & \beta  \\
\gamma & -\alpha  \\
\end{array}
\right)
\end{eqnarray*}
on $(E, {\bf p})$ is nilpotent with respect to $p_{t_2}$, which coincides with direction $(1,0)$, then residue of $\alpha$ (and $\gamma$) vanishes at $t_2$, i.e. 
\[
\alpha\in \Gamma(\omega_c(t_1)) =  \Gamma(\omega_c)
\]
and this implies that $\alpha$ is regular also in $t_1$. Then the residual matrix of $\theta$ at $t_1$ is anti-diagonal. Consequently,  since $p_{t_1}$ is outside both $L_i(-t_1)$ and  $L_i$, the nilpotent condition implies that residues of $\beta$ and $\gamma$ vanish at $t_1$, i.e
\begin{displaymath}
\left\{ \begin{array}{ll}
\beta: L_i \to L_i(-t_1)\otimes\omega_c(t_2)\\
\gamma: L_i(-t_1) \to L_i\otimes\omega_c(t_2). 
\end{array} \right. 
\end{displaymath}
This last implies that $\beta=0$ because $t_1$ and $t_2$ are not linearly equivalent.  To conclude, if $\theta$ has vanishing determinant, then $\alpha$ also vanishes and $\theta$ is therefore $S$-equivalent to the vanishing Higgs field. 

The case  (\ref{gamma2}') is analogous to  (\ref{gamma1}'). It is remain to consider the case  (\ref{gamma3}') where $E=L_i\oplus L_i$ with $p_{t_1}$ and $p_{t_2}$ not lying in the same embedding of  $L_i$. We may assume, without loss of generality, that $p_{t_1}$ lies in the first factor $L_i\subset L_i\oplus L_i$ and $p_{t_2}$ lies in the second factor. The nilpotent conditions on parabolic directions imply 
\begin{displaymath}
\left\{ \begin{array}{ll}
\alpha: \mathcal O_C \to \omega_{C}\\
\beta: L_i \to L_i\otimes\omega_c\\
\gamma: L_i\to L_i\otimes\omega_c 
\end{array} \right. 
\end{displaymath}
which turns out to say that they are given by global $1$-forms on the elliptic curve: 
\[
\alpha = \alpha_0\frac{dx}{y}\;\; \beta = \beta_0\frac{dx}{y}\;\;\text{and}\;\;\gamma = \gamma_0\frac{dx}{y}
\]
$\alpha_0$, $\beta_0$ and $\gamma_0$ in $\C$. In addition,  if $\det\theta = 0$ and either $\beta_0$ or $\gamma_0$ vanish then $\alpha_0$ vanishes and consequently $\theta$ is $S$-equivalent to the vanishing Higgs field. On the other hand, if $\beta_0\gamma_0$ does not vanish, then up to automorphisms of the parabolic vector bundle $(L_i\oplus L_i, {\bf p})$, which are diagonal, we may assume that $\gamma_0 = 1$ in order to get the expression for $\theta$ in the statement of the proposition. 

 \endproof
 
 \subsection{Equation for $C^0$}\label{section:eqC0}
 In order to obtain an explicit equation for $C^0$, we recall the coordinates for  $\mathcal B_{\bf a}(C, D)$ which have been introduced in \cite{FL}. 
 
 Firstly, let's consider the chamber formed by weight vectors $(a_1, a_2)\in (0,1)^2$ with  $0< a_1 < a_2 < 0$, and let ${\bf a}$ be a weight in this chamber. Each point $(E, {\bf p})$ in $\mathcal B_{\bf a}(C, D)$ fits in one of the following cases
\begin{itemize}
\item $(L\oplus L^{-1}, {\bf p})$, $\deg L = 0$,  with $p_{t_2}$ outside $L$ and $L^{-1}$;
\item $(E_0\otimes L_i, {\bf p})$ with $p_{t_2}$ outside $L_i$ 
\end{itemize}
This follows from \cite[Theorem 4.6 and Theorem 4.7]{Nestor}. We note that there is a copy of $C^0$ in $\mathcal B_{\bf a}(C, D)$ consisting of parabolic vector bundles which lie in cases (\ref{gamma1})  and (\ref{gamma1}') of Section~\ref{section:stric}, which are given either by the condition $p_{t_1}\subset L$ or $p_{t_1}\subset L_i$.  

Following \cite{FL}, the elementary transformation over $t_2$ gives rise to an  isomorphism of moduli spaces
\begin{eqnarray}\label{isoelem2}
\elem_{t_2} : \mathcal B_{\bf a}(C, D) \to \mathcal B_{\bf a'}(C, D, \mathcal O_C(w_{\infty}))
\end{eqnarray}
where ${\bf a'} = (a_1', a_2')$, $a_1'+a_2'<1$ (twist by a suitable square root is necessary to get determinant $\mathcal O_C(w_{\infty})$).  The advantage here is that  any point of $Bun_{\bf a'}(C, D, \mathcal O_C(w_{\infty}))$ is of the form $(E_1, {\bf p})$ where $E_1$ is the unique, up to isomorphism, nontrivial extension
\[
0 \to \mathcal O_C \to E_1 \to \mathcal O_C(w_{\infty}) \to 0.
\]
Since $E_1$ has no automorphisms, besides trivial ones,  the parabolic vector bundle is completely determined by parabolic directions 
\[
(p_1, p_2) \in \P E_1|_{t_1}\times \P E_1|_{t_2} \simeq \P^1_{z_1}\times\P^1_{z_2}. 
\] 
In view of isomorphism (\ref{isoelem2}), this gives new coordinates $(z_1, z_2)$ for $\mathcal B_{\bf a}(C, D)$. A point $(E_1,{\bf p})$ in $Bun_{\bf a'}(C, D, \mathcal O_C(w_{\infty}))$  lies in $C^0$ if and only if there is an embedding $L\subset E_1$, $\deg L = 0$,  which contains each parabolic directions $p_1, p_2$.

\begin{prop}\label{eqforC0}
With notations as above, the curve $C^0\subset \P^1_{z_1}\times\P^1_{z_2}$ has equation
\begin{eqnarray*}
& \left( {t}^{2}+\lambda \right) ({z_{{1}}}^{2}+z_2^2) - 2\left( t+\lambda
+ 1 \right)({z_{{1}}}^{2}z_{{2}}+ z_{{1}}{z_{{2}}}^{2})
+4\,{z_{{2}}}^{2}{z_{{1}}}^{2}+&\\
&-2t\lambda (z_{{1}}+z_2) +
 2 \left(\lambda -{t}^{2}+2t\lambda+2t \right) z_{{1}}z_{{2}}=0&.
\end{eqnarray*}
\end{prop}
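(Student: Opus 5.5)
The plan is to parametrize $C^0$ directly in the coordinates $(z_1,z_2)$ coming from the bundle $E_1$ and then eliminate the parameter. By the characterization stated just before the proposition, a point $(E_1,{\bf p})$ lies in $C^0$ exactly when some embedding $L\hookrightarrow E_1$ with $\deg L=0$ passes through both $p_1$ and $p_2$. The parameter is therefore $L\in{\rm Pic}^0(C)\simeq C$, which we write as $L=\mathcal O_C(w_\infty-q)$ with $q=(x_0,y_0)\in C$.

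\textbf{Step 1: an explicit model of $E_1$.} We realize $E_1$ by two trivializations: one on $C\setminus\{w_\infty\}$ and one near $w_\infty$. The extension class of $0\to\mathcal O_C\to E_1\to\mathcal O_C(w_\infty)\to 0$ is nonzero in $H^1(\mathcal O_C(-w_\infty))\simeq\C$, so the gluing can be taken as an upper triangular cocycle whose off-diagonal entry is a Laurent tail in the local parameter $x/y$ at $w_\infty$. In this model we identify $\P E_1|_{t_1}$ and $\P E_1|_{t_2}$ with $\P^1$, which fixes the coordinates $z_1,z_2$. We take the same normalization as in \cite{FL}, so that the fibers over $t_1=(t,u)$ and $t_2=(t,-u)$ are exchanged by the elliptic involution $y\mapsto -y$. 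This choice is consistent with the expected symmetry of the equation in $z_1,z_2$.

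\textbf{Step 2: the embeddings $L\hookrightarrow E_1$.} Since $\deg L=0$, we have $\dim{\rm Hom}(L,E_1)=h^0(L^{-1}\otimes E_1)$. Riemann--Roch gives $h^0(L^{-1}\otimes E_1)\ge 1$, and it is generically equal to $1$. So for general $q$ there is a unique embedding, corresponding to a section of $E_1\otimes\mathcal O_C(q-w_\infty)$. In the trivializations of Step 1, this section is a pair of rational functions on $C$: the first component has a simple pole at $q$ and is regular elsewhere, and the second is constrained by the gluing at $w_\infty$. We expect expressions such as $(y+y_0)/(x-x_0)$ together with a constant. Evaluating the section at $t_1$ and $t_2$ gives $z_1$ and $z_2$ as explicit rational functions of $(x_0,y_0)$. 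Typically these look like $z_1=\frac{\ast(u+y_0)}{t-x_0}$ and $z_2=\frac{\ast(-u+y_0)}{t-x_0}$, up to Möbius normalization.

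\textbf{Step 3: elimination.} We eliminate $(x_0,y_0)$ using $y_0^2=x_0(x_0-1)(x_0-\lambda)$ and $u^2=t(t-1)(t-\lambda)$. The sum and the product of the two values should be rational in $x_0$ alone, while the difference carries $y_0u$. Squaring away $y_0$ should therefore leave a single relation of bidegree $(2,2)$ in $(z_1,z_2)$, which is the correct class for $C^0$ by \cite[Theorem 4.6]{Nestor}.

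\textbf{Checks.} We verify the outcome against the torsion points. For the four points $q=w_i$, the bundle degenerates as in cases (1')--(3'), and the curve should meet the special lines of Remark~\ref{rmk:8lines} accordingly. This check calibrates the normalization of the coordinates $z_i$.

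The main obstacle is Step 1 together with the coordinate normalization. The stated coefficients, such as $t^2+\lambda$ and $2t\lambda$, depend entirely on the precise trivialization of $E_1$ chosen in \cite{FL}. We would first reproduce that trivialization exactly. Failing that, we would fix the three Möbius normalizations of each $\P^1_{z_i}$ by the images of distinguished points, namely the maximal subbundle $\mathcal O_C\subset E_1$ and the torsion embeddings, and then match the result with the stated equation.
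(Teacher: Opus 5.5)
\textbf{Verdict.} Your plan is correct, but it obtains the two coordinate maps by a different route from the paper's.

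\textbf{Comparison with the paper.} Both arguments realize $C^0$ as the image of ${\rm Pic}^0(C)\ni L\mapsto (L|_{t_1},L|_{t_2})$ and finish with an elimination or direct check. The paper never writes down $E_1$. It observes that the $i$-th coordinate map is $2:1$ and ramifies exactly when $L^2\simeq\mathcal O_C(w_\infty-t_i)$: if $L$ is the only degree-zero subbundle through $p_i$, then $\elem_{t_i}E_1\simeq E_0\otimes L$. Hence, under $p\mapsto\mathcal O_C(w_\infty-p)$, this map is the complete linear system $|3w_\infty-t_j|$, i.e.\ projection of the plane cubic from $t_j$. The paper writes it as $z_i=\frac{ty-u_jx}{y-u_j}$, the $x$-intercept of the line through $t_j$ and $(x,y)$.

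Your Steps 1--2 do go through and produce the same maps. Take a frame $(e_1,e_2)$ on $C\setminus\{w_\infty\}$ and $(e_1',e_2')$ near $w_\infty$, with $s=x/y$ and gluing $e_1=e_1'$, $e_2=s\,e_2'+(y/x)\,e_1'$. Then the section of $E_1(q-w_\infty)$ is $\frac{y+y_0}{x-x_0}e_1-e_2$. In fact $h^0=1$ for every $q$, not just generically, since $E_1\otimes L^{-1}$ is stable of degree $1$. So $L|_{t_i}$ is spanned by $w_ie_1-e_2$ with $w_1=\frac{u+y_0}{t-x_0}$ and $w_2=\frac{y_0-u}{t-x_0}$. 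These are minus the slopes of the lines from $q$ to $t_2$ and to $t_1$, and $z_1=t-u/w_1$, $z_2=t+u/w_2$ are exactly the paper's coordinates.

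The trade-off is this. The paper's route is short and explains the geometry: each $z_i$ is a projection, and $C^0$ is tangent to the rulings over the branch values. Yours is mechanical and self-contained, at the price of the \v{C}ech bookkeeping. Neither fixes the M\"obius normalization intrinsically: the paper fixes it by writing $z_i$ as an $x$-intercept, just as your last step calibrates against the statement.

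\textbf{Correction to your Checks paragraph.} As written, it is wrong. The points $q=w_k$ are not torsion points of $C^0$. Follow a point of type (1') through $\elem_{t_2}$: the subbundle $L_k$ becomes $L_k(-t_2)\otimes M\subset E_1$, where $M^2\simeq\mathcal O_C(w_\infty+t_2)$, and its square is $\mathcal O_C(w_\infty-t_2)$. These four subbundles are the ramification points of $z_2$, where $C^0$ is tangent to a ruling $\{z_2=\mathrm{const}\}$. Moreover, the lines $\mathfrak v_i,\mathfrak h_i$ are defined in the forgetful coordinates of $\mathcal B_{{\bf a}_c}(C,D)$, not in the $E_1$-coordinates used here.

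The calibration that does work is the one in your final sentence. In the normalization of the statement, $L_k=\mathcal O_C(w_\infty-w_k)\subset E_1$ goes to $(k,k)$ for $k=0,1,\lambda$, and $\mathcal O_C\subset E_1$ goes to $(t,t)$. Indeed, on the diagonal $z_1=z_2=z$ the stated equation reduces to $4z(z-1)(z-\lambda)(z-t)=0$.
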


\proof
As  noticed, a point $(E_1,{\bf p})$  lies in $C^0$ if and only if there exists an embedding $L\subset E_1$, $\deg L = 0$, containing each parabolic directions $p_1, p_2$. Then $C^0$ coincides with the image of the map 
\begin{eqnarray*}
{\rm Pic}^0(C) &\to& \P E_1|_{t_1}\times \P E_1|_{t_2}\simeq \P^1_{z_1}\times \P^1_{z_2} \\
 L &\mapsto& (L|_{t_1}, L|_{t_2}) .
\end{eqnarray*}

We can  see that the $i$-th coordinate ${\rm Pic}^0(C) \to  \P E_1|_{t_i}$, $i=1,2$, ramifies if and only if  $L^2 = \mathcal O_C(w_{\infty}-t_i)$. To doing so,  assume that $L$ is the only degree zero line bundle passing through $p_i$. 
The elementary transformation over $t_i$ transforms $E_1$ into the indecomposable vector bundle $E_0\otimes L$ with determinant $\mathcal O_C(w_{\infty}-t_i)$, which implies $L^2 = \mathcal O_C(w_{\infty}-t_i)$.

Using this, and the isomorphism $C \to {\rm Pic}^0(C)$, $p\mapsto \mathcal O_C(w_{\infty}-p)$, we conclude that the composition $C\to  \P E_1|_{t_i}$ ramifies if and only if $2w_{\infty}-2p\sim w_{\infty} - t_i$. Because of the relation $t_i+t_j\sim 2w_{\infty}$, $\{i,j\} = \{1,2\}$,  one obtains 
\[
2p\sim 3w_{\infty}-t_j. 
\]
This shows that $C\to  \P E_1|_{t_i}$ corresponds to  the linear system  $|3w_{\infty}-t_j|$. 

Then, considering the embedding $C\subset \P^2$, the map $C\to  \P E_1|_{t_i}$ coincides with the restriction of the linear projection from $t_j$.  Using coordinates $(x,y)\in C$ we conclude that $C\to  \P E_1|_{t_i}$ is given by 
\[
(x,y) \mapsto \frac{ty-u_jx}{y-u_j} 
\]
where  $t_j = (t,u_j)$. Since $ u_1 = -u_2$ (see the beginning of Section~\ref{section:basic}), fixing $u=u_1$ we see that $C^0$ is the image of the map $C \to \P^1_{z_1}\times \P^1_{z_2}$ given by
\[
(x,y) \mapsto \big(\frac{ty+ux}{y+u} , \frac{ty-ux}{y-u}\big).  
\]
By a direct calculation one can show that $z_1=\frac{ty+ux}{y+u}$ and $z_2=\frac{ty-ux}{y-u}$ satisfy the equation of the statement and this concludes the proof of the proposition.  
\endproof

 \subsection{Hitchin Hamiltonians}\label{section:HHamilt}
We keep the notation of the previous section.  Let $(z_1,z_2)$ represent a parabolic vector bundle of $\mathcal B_{\bf a}(C, D)$ and let $\theta(z_1, z_2)$ be a Higgs field over it. It follows from \cite{FL} that $\theta$ can be written  as 
\[
\theta(z_1, z_2) = c_1\Theta_1^0(z_1, z_2)+c_2\Theta_2^0(z_1, z_2)
\]
where $c_1, c_2\in \C$ and  $\Theta_1^0, \Theta_2^0$ are given by \cite[Table 1]{FL}.  Taking the determinant $\det(\theta)(z_1, z_2)$,  we get an explicit expression for the Hitchin map 
\[
\det = (h_1, h_2) : \mathcal H(C) \to \C^2
\] 
over $\mathcal B_{\bf a}(C, D)$

\begin{eqnarray}\label{explicitexp}
\left\{ \begin{array}{ll}
h_1(c_1,c_2,z_1,z_2) = a_0\cdot c_1^2+a_1\cdot c_1c_2+a_2\cdot c_2^2\\
h_2(c_1,c_2,z_1,z_2) =  b_0\cdot c_1^2+b_1\cdot c_1c_2+b_2\cdot c_2^2
\end{array} \right. 
\end{eqnarray}
where $a_i, b_i$, $i=0,\dots, 2$,  are given in Table~\ref{HH}.

\begin{table}
\centering
\begin{tabular}[p]{|c|}
   \hline
$
h_1 = a_0c_1^2+a_1c_1c_2+a_2c_2^2  
$ \\
$
a_0 = \sum_{i=0}^4 a_{0i}z_1^i\;,\;\;
a_2 = \sum_{i=0}^4 a_{2i}z_2^i\;,\;\;
a_1 = \sum_{0\le i,j \le 2}a_{1ij}z_1^iz_2^j
$ \\
   \hline 
$  
\begin{array}{lll}
\displaystyle a_{00} = a_{20} =  -{t}^{3}{\lambda}^{2} \\
\displaystyle a_{01} = a_{21} = 4\,{t}^{3}\lambda\, \left( -t+\lambda+1 \right)  \\
\displaystyle a_{02} = a_{22} =  -2\,{t}^{2} \left( 2\,t+\lambda+2\,t{\lambda}^{2}-2\,{t}^{2}\lambda-2
\,{t}^{2}+t\lambda+{\lambda}^{2} \right)   \\
\displaystyle a_{03} = a_{23} = 4\,{t}^{2} \left( -{t}^{2}+1+{\lambda}^{2} \right)  \\
\displaystyle a_{04} = a_{24} = -t \left( -3\,{t}^{2}+2\,t\lambda+2\,t+{\lambda}^{2}-2\,\lambda+1 \right)   \\
\displaystyle a_{100} = 2\,{t}^{3}{\lambda}^{2}  \\
\displaystyle a_{101} = a_{110} = 4\,{t}^{2}\lambda\, \left( t\lambda-2\,\lambda+t-{t}^{2} \right)   \\
\displaystyle a_{102} = a_{120} = -2\,t\lambda\, \left( t\lambda-2\,\lambda+t-{t}^{2} \right) \\
\displaystyle a_{111} = -8\,t \left( -t{\lambda}^{2}+{t}^{2}-t\lambda+{t}^{2}{\lambda}^{2}+2\,{t}^{2}\lambda-{t}^{3}\lambda-{t}^{3}-{\lambda}^{2} \right) \\  
\displaystyle a_{112} = a_{121} = 4\,t \left( -2\,\lambda-2\,{\lambda}^{2}-{t}^{3}+t+2\,t\lambda+t{\lambda}^{2} \right)  \\
\displaystyle a_{122} = 2\,t \left( -6\,t\lambda+6\,\lambda+{\lambda}^{2}-6\,t+5\,{t}^{2}+1 \right)  
\end{array}
$
 \\
    \\ \hline
$
h_2 = b_0c_1^2+b_1c_1c_2+b_2c_2^2 
$ \\ 
$
b_0 = \sum_{i=0}^4 b_{0i}z_1^i\;,\;\;
b_2 = \sum_{i=0}^4 b_{2i}z_2^i\;,\;\;
b_1 = \sum_{0\le i,j \le 2}b_{1ij}z_1^iz_2^j
$ \\
\hline
$
\begin{array}{lll}
\displaystyle b_{00} = b_{20} = {t}^{2}{\lambda}^{2} \\
\displaystyle b_{01} = b_{21} = -4\,\lambda\,{t}^{2} \left( -t+1+\lambda \right)  \\
\displaystyle b_{02} = b_{22} = 2\,t \left( t\lambda+{\lambda}^{2}+\lambda-2\,{t}^{2}\lambda+2\,t{\lambda}^{2}+2\,t-2\,{t}^{2} \right) \\
\displaystyle b_{03} = b_{23} = -4\,t \left( 1+{\lambda}^{2}-{t}^{2} \right) \\
\displaystyle b_{04} = b_{24} = 2\,t+2\,t\lambda-3\,{t}^{2}+1-2\,\lambda+{\lambda}^{2}  \\
\displaystyle b_{100} =  -2\,{t}^{2}{\lambda}^{2} \\
\displaystyle b_{101} = b_{110} =  4\,\lambda\,{t}^{2} \left( -t+1+\lambda \right) \\
\displaystyle b_{102} = b_{120} = -2\,t \left( -2\,{t}^{3}+2\,{t}^{2}\lambda-3\,t\lambda+{\lambda}^{2}+\lambda+2\,{t}^{2} \right)  \\
\displaystyle b_{111} = -8\,{t}^{2} \left( -t+1+\lambda \right) ^{2} \\  
\displaystyle b_{112} = b_{121} =  4\,t \left( 1+{\lambda}^{2}-{t}^{2} \right)  \\
\displaystyle b_{122} = -4\,t\lambda+6\,{t}^{2}-4\,t+4\,\lambda-2\,{\lambda}^{2}-2
\end{array}
$
\\
\hline
   \end{tabular}
\caption{Hitchin Hamiltonians} 
\label{HH}
\end{table}

\subsection{Nilpotent cone of $\mathcal H(C)$}\label{section:nilpotentcone}
In this section we show that the nilpotent cone $\mathcal N_C$ of $\mathcal H(C)$ is the image  $\Phi(\mathcal N_{\P^1})$ of the nilpotent cone $\mathcal N_{\P^1}$ of $\mathcal H(\P^1)$, where  $\Phi: \mathcal H(\P^1) \to \mathcal H(C)$ is the map of Section~\ref{section:modularcovering}.

Higgs fields in $\mathcal N_C$ which admit an ${\bf a}_c$-unstable parabolic vector bundle have been studied in Corollary~\ref{cor:nilpunstable}: there are exactly $8$ of them and they are image of the $16$ Hodge bundles of $\mathcal H(\P^1)$.  Hence, it remains to study the restriction to the stable part: Let $\mathcal U_C$ denote the  open subset   of $\mathcal H(C)$ formed by $(E, {\bf p}, \theta)$ with $(E, {\bf p})\in \mathcal B_{{\bf a}_c}(C, D)$.  
 
\begin{thm}\label{prop:nilpcon}
The nilpotent cone $\mathcal N_C$ of $\mathcal H(C)$ is the image  $\Phi(\mathcal N_{\P^1})$ of the nilpotent cone $\mathcal N_{\P^1}$ of $\mathcal H(\P^1)$. Consequently,  $\mathcal N_C$ has exactly $9$ components.
\end{thm}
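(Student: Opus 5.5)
The strategy is to prove the inclusion $\mathcal N_C\subset\Phi(\mathcal N_{\P^1})$. The reverse inclusion is immediate from the commutative diagram (\ref{dia:detsmaps}): $\pi^*(0)=0$. Once equality holds, the components are counted using the description of $\mathcal N_{\P^1}$ recalled in Section~\ref{section:recap} and the fact that $\Phi$ is a degree two morphism whose involution is $\elem_{0,1,\lambda,\infty}$.

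\emph{Step 1 (unstable part).} By Corollary~\ref{cor:nilpunstable}, the points of $\mathcal N_C$ whose underlying parabolic bundle is ${\bf a}_c$-unstable are exactly $8$ points, namely the images of the $16$ Hodge bundles $\Theta_i\in\mathcal N_i$. So they lie in $\Phi(\mathcal N_{\P^1})$.

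\emph{Step 2 (the open set $\mathcal U_C$).} Let $(E,{\bf p},\theta)\in\mathcal N_C\cap\mathcal U_C$, and let $(z_1,z_2)\in\P^1_{z_1}\times\P^1_{z_2}$ be its underlying point.
\begin{enumerate}
\item If $(z_1,z_2)\in C^0$, then by Propositions~\ref{prop:nothingbut} and \ref{prop:fourtorsion} the Higgs bundle is either $S$-equivalent to $(E,{\bf p},0)$, or it is the extra nilpotent field on $L_i\oplus L_i$. In the second case $(z_1,z_2)$ is a torsion point of $C^0$, which is the intersection point $\frak v_i\cap\frak h_i$.
\item If $(z_1,z_2)\notin C^0$, the bundle is stable. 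The fiber of $\mathcal N_C$ over it is the cone $\{h_1=h_2=0\}$ in the $(c_1,c_2)$-plane, cut out by the two binary quadrics of (\ref{explicitexp}). This cone is nonzero exactly when the resultant of $h_1$ and $h_2$ in $(c_1:c_2)$ vanishes.
\end{enumerate}
The plan is to show that this resultant, a polynomial in $(z_1,z_2)$, vanishes precisely on the $8$ lines $\frak v_i,\frak h_i$ of Remark~\ref{rmk:8lines}, possibly together with $C^0$. Over a generic point of each such line there should be a single nilpotent direction, so the closure of these nilpotent Higgs fields gives a surface over each line.

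To avoid brute-force elimination, I will argue geometrically instead. For each of the $16$ special lines $l_j\subset\mathcal S$, \cite[Theorem 4.5]{FL2} gives a component $\mathcal N_j$ fibred over $l_j$. By Proposition~\ref{prop:8lines}, $\Phi(\mathcal N_j)$ is a $2$-dimensional subset of $\mathcal N_C$ lying over $\Phi(l_j)\in\{\frak v_i,\frak h_i\}$, and each line is hit by exactly two of the $l_j$. This already exhibits nonzero nilpotent fields over all $8$ lines, showing these lines lie in the zero locus of the resultant. The main obstacle is the converse: showing there are no nonzero nilpotent fields over stable points off these lines. I would first try a degree count. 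The resultant has bidegree $(8,8)$, since $h_1,h_2$ have coefficients of degree $\le4$ in each $z_i$. The $8$ lines already account for bidegree $(4,4)$ each with multiplicity one, so I would compare multiplicities along the lines. If needed, I would fall back to direct substitution at a convenient point using Table~\ref{HH}, or to an argument via the spectral curve: a nilpotent $\theta$ has a kernel line subbundle $N\subset E$ with $\theta:E\to N\otimes\omega_C(D)$, and semistability forces $N$ to contain, or be constrained at, the parabolic directions, pushing $(E,{\bf p})$ onto one of the $8$ lines.

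\emph{Step 3 (counting).} From Steps 1 and 2, every point of $\mathcal N_C$ lies in $\Phi(\mathcal S)$ or in $\Phi(\mathcal N_j)$ for some $j$. Hence $\mathcal N_C=\Phi(\mathcal N_{\P^1})$. The involution $\elem_{0,1,\lambda,\infty}$ preserves $\mathcal S$ and, as shown in the proof of Proposition~\ref{prop:8lines}, swaps the $16$ components $\mathcal N_j$ in pairs. This leaves $1+8=9$ irreducible images. They are pairwise distinct, since they lie over distinct lines, and each is non-empty of dimension $2$, so $\mathcal N_C$ has exactly $9$ components.
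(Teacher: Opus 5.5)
\textbf{There is a genuine gap: the central step is not proved.} Your skeleton matches the paper's. The unstable part comes from the corollary on the eight unstable nilpotent Higgs bundles, $C^0$ from the two propositions on non-torsion and torsion points, a resultant in $(c_1:c_2)$ handles the rest, and the count is $1+16/2=9$. What you still need is:
(a) at a point of $\P^1_{z_1}\times\P^1_{z_2}$ off $C^0$ and off the eight lines, there is no nonzero nilpotent field;
(b) at \emph{every} point of the lines, the nonzero nilpotent fields form a single $\C^*$-orbit, which is then the one supplied by $\Phi(\mathcal N_j)$.
You prove only the easy half (the lines lie in the zero locus of the resultant) and list strategies. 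The paper settles (a) and (b) by computing the resultant from the explicit Hamiltonians $h_1,h_2$. It equals $\mathfrak{v}(z_1)\,\mathfrak{h}(z_2)\,c(z_1,z_2)^2$, and on the lines the two quadrics share exactly one root. Your degree count cannot replace this as described. The lines account for bidegree $(4,4)$ with multiplicity one, and the residual factor of bidegree $(4,4)$ is $c^2$, supported on $C^0$, not extra multiplicity along the lines. Indeed $C^0$ must appear: the universal family $c_1\Theta_1^0+c_2\Theta_2^0$ lives on the representative with $p_{t_1}\subset L$. That representative carries a nonzero nilpotent field ($\alpha=\gamma=0$, $\beta\in\Gamma(L^2(t_1))$) which is $S$-equivalent to $0$. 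To conclude by degrees you would have to prove that $c^2$ divides the resultant and that the resultant is not identically zero. Evaluating at one point does not determine a zero locus.

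The kernel-subbundle idea you mention last is a viable, computation-free alternative, but it must be carried out. Put $N=\ker\theta$; then $\theta$ factors through a nonzero $\varphi\in H^0(C,N^{2}(D))$. Nilpotency at $t_i$ forces $\varphi(t_i)=0$ whenever $p_i\not\subset N$. Let $k=\#\{i:\,p_i\subset N\}$; stability gives ${\rm Stab}_{{\bf a}_c}(N)=1-k-2\deg N\ge 0$, hence $-k\le 2\deg N\le 1-k$.
\begin{itemize}
\item $k=0$: $N\simeq L_i$ and $E$ is an extension of $L_i$ by $L_i$ with both directions off $N$. This is a point of $\mathfrak{v}_i$, or one of the eight unstable parabolic bundles, and $\theta$ is unique up to scaling and automorphisms.
\item $k=1$: ${\rm Stab}(N)=0$ and $\theta$ induces zero on the graded object, so it is $S$-equivalent to a zero field.
\item $k=2$: $N^2\simeq\mathcal O_C(-D)$, and $\elem_{t_1,t_2}$ reduces this to $k=0$, giving $\mathfrak{h}_i$.
\end{itemize}
This yields (a) and (b) at every point, whereas you only assert a single nilpotent direction ``generically'', which is not enough for Step 3. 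Separately, in Step 2(1) you never show that the extra orbit from the torsion-point proposition lies in $\Phi(\mathcal N_{\P^1})$. Observing that the point is $\mathfrak{v}_i\cap\mathfrak{h}_i$ does not do this; the paper uses invariance of these Higgs bundles under the elliptic involution.
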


\proof
There is a copy of $\mathcal B_{{\bf a}_c}(C, D)$ (which is isomorphic to $\P^1\times\P^1$) in $\mathcal N_C$ by taking the Higgs field to be zero. It is the image of $\mathcal S$ under $\Phi$ and this gives one component of $\mathcal N_C$. 

By considering the natural action of $\C^*$, given by multiplication on the Higgs field, we see that $\Phi$ is $\C^*$-equivariant.  Then the remaining $16$ components $\mathcal N_i$  of $\mathcal N_{\P^1}$ (c.f. \cite[Theorem 4.5]{FL2}), which lie over the $16$ rational curves of $\mathcal S$,  are sent to $8$ components of $\mathcal N_C$ lying  over the $8$ special lines of $\P^1\times\P^1$ ($4$ horizontal and $4$ vertical, see Remark~\ref{rmk:8lines} and Proposition~\ref{prop:8lines}). Hence, for each point $(E, {\bf p})$ lying in one of these lines, there is at least a $\C^*$-orbit of Higgs fields on $\mathcal N_C$ over it. Moreover, each $\mathcal N_i$ contains exactly one Hodge bundle (c.f. \cite[Proposition 4.1]{FL2}) and they are sent to the $8$ Higgs fields in $\mathcal H(C)$ which admit ${\bf a}_c$-unstable underlying parabolic vector bundle (c.f. Corollary~\ref{cor:nilpunstable}). Summarizing,  $\Phi$ sends the $17$ components of $\mathcal N_{\P^1}$ to $9$ components of $\mathcal N_{C}$, and the complement $\mathcal N_{\P^1}\setminus\mathcal U_{\P^1}$ goes to the complement $\mathcal N_{C}\setminus\mathcal U_C$. We will show that these are all components of $\mathcal N_C$.  

It remains to consider $\mathcal N_{C}\cap\mathcal U_C$. So, take an element $(E, {\bf p}, \theta)\in \mathcal N_{C}\cap\mathcal U_C$, i.e. $(E, {\bf p})\in \mathcal B_{{\bf a}_c}(C, D)$ and $\theta$ has vanishing determinant. We may assume that $(E, {\bf p}, \theta)$ is not $S$-equivalent to $(E, {\bf p}, 0)$. Special attention must be taken for those points $(E, {\bf p})$ which lie in some $S$-equivalence class of the locus $C^0$ of strictly semistable parabolic vector bundles (c.f. Section~\ref{section:stric}), because each class has three representatives. But, it follows from Proposition~\ref{prop:nothingbut} and Proposition~\ref{prop:fourtorsion}, that a nonzero Higgs field of $\mathcal N_C$ does not occur over $C^0$, unless we are over the four torsion points, i.e. $E = L_i\oplus L_i$ with  $p_{t_1}$ and $p_{t_2}$ not lying in the same embedding of  $L_i$ and  
\begin{eqnarray*}
\theta=\left(
\begin{array}{ccc} 
\alpha_0 & \beta_0  \\
1 & -\alpha_0  \\
\end{array}
\right)\frac{dx}{y}
\end{eqnarray*}
with $\alpha_0\in \C$, $\beta_0\in \C^*$,  satisfying $\alpha_0^2+\beta_0= 0$. These Higgs bundles lie in the image of $\Phi$, indeed they are invariant by  the elliptic involution $\sigma:(x,y) \mapsto (x,-y)$. To see this, note that the transformed Higgs field
\begin{eqnarray*}
\sigma^*\theta=\left(
\begin{array}{ccc} 
-\alpha_0 & -\beta_0  \\
-1 & \alpha_0  \\
\end{array}
\right)\frac{dx}{y}
\end{eqnarray*}  
is the same as 
\begin{eqnarray*}
\sigma^*\theta=\left(
\begin{array}{ccc} 
-\alpha_0 & \beta_0  \\
1 & \alpha_0  \\
\end{array}
\right)\frac{dx}{y}
\end{eqnarray*}
up to a diagonal automorphism with entries  $\pm 1$. This last one coincides with $\theta$, up to swapping the two factors of the decomposable bundle $E=L_i\oplus L_i$, followed by a diagonal automorphism having $1$ and $\beta_0^{-1}$ as entries.

We now assume that $(E, {\bf p}, \theta)$ belongs to $\mathcal N_{C}\cap\mathcal U_C$ and the underlying bundle $(E, {\bf p})$ does not lie in $C^0$. These parabolic vector bundles are ${\bf a}$-stable with respect to a weight vector ${\bf a} = (a_1, a_2)$ satisfying $a_1<a_2$, and then we can use explicit computation for the coordinates $(h_1, h_2)$ of the Hitchin map, see (\ref{explicitexp}). More precisely, a Higgs field 
\begin{eqnarray}\label{univ}
\theta(z_1, z_2) = c_1\Theta_1^0(z_1, z_2)+c_2\Theta_2^0(z_1, z_2)
\end{eqnarray}
has vanishing determinant if and only if  $h_1 = h_2 = 0$, i.e.
\begin{eqnarray}\label{nilpinab}
\left\{ \begin{array}{ll}
a_0c_1^2+a_1c_1c_2+a_2c_2^2 = 0\\
b_0c_1^2+b_1c_1c_2+b_2c_2^2 = 0
\end{array} \right. 
\end{eqnarray}
where coefficients $a_i, b_i$ depend on the parabolic structure $(z_1, z_2)$. This last represents a point $(E, {\bf p})$. The resultant of the two homogenous polynomials in variables $c_1, c_2$ which appear in (\ref{nilpinab}) gives the constraints on $(z_1, z_2)$ to have a Higgs field $\theta(z_1, z_2)$ with vanishing determinant. One the one hand, as noticed at the beginning of the proof,  the $4$ special horizontal lines $\frak h_i$ and the $4$ special vertical lines $\frak v_i$ are solutions of it.  On the other hand, a direct computation shows that the resultant is given by   (up to constant)
\[
\frak v (z_1) \cdot \frak h (z_2) \cdot c(z_1,z_2)^2
\]
where $\frak v$ is a polynomial of degree $4$ in $z_1$ 
\[
\frak v (z_1) = \sum_{i=0}^4  s_i z_1^i
\]
$\frak h$ is a polynomial of degree $4$ in $z_2$ 
\[
\frak h(z_2) = \sum_{i=0}^4  t_i z_2^i
\]
and $c(z_1,z_2)$ is the equation for the locus $C^0$, see Proposition~\ref{eqforC0}.

Therefore, we conclude that $\frak v$ is the product of the $\frak v_i$ and $\frak h$ is the product of the $\frak h_i$.  The equation $c(z_1,z_2)$ happens exactly when $h_1$ and $h_2$  coincide (and is a square). Higgs fields over $C^0$ have been considered in the previous paragraph, they happen only over four (torsion) points which lie in the union of the $8$ lines. The appearance of $c(z_1,z_2)$ here, comes from the fact that  the universal family (\ref{univ}) does not consider $S$-equivalence. This shows that $\mathcal N_C\setminus \mathcal U_C$ sits over the $8$ special lines.  Since $h_1$ and $h_2$ are homogenous of degree two in $c_1, c_2$, each one defines two lines of solution in $c_1, c_2$  (with $(z_1,z_2)$  fixed) and they have a common line exactly when $(z_1,z_2)$ satisfies $\frak v(z_1) \cdot \frak h(z_2) = 0$.  Since the solution in $c_1, c_2$ is just one line, this line coincides with the $\C^*$-orbit over $(z_1, z_2)$ we have found at the beginning of the proof, which lies in the image of $\Phi$. This concludes the proof that $\mathcal N_C$ is contained in the image of $\Phi$. Since $\Phi$ commutes with each Hitchin maps $\det$, see diagram \eqref{dia:detsmaps}, this shows that $\Phi(\mathcal N_{\P1}) = \mathcal N_C$. 

We observe that $\Phi$ has degree two (c.f. Theorem~\ref{thm:morphismPhi_0}), the $16$ components of $\mathcal N_{\P^1}$ which lie over the $16$ rational curves of $\mathcal S$  are sent  to the $8$ component of $\mathcal N_C$ lying over the $8$ special lines of $\P^1\times \P^1$. 
\endproof 
 
 \section{Hitchin fibers}\label{spechyper}
Once we have finished the discussion about the nilpotent cone, it remains to consider fibers of the Hitchin map 
\[
{\det}: \mathcal H(C) \to \Gamma(\omega_c^{\otimes 2}(D))\simeq \C^2
\] 
over a nonzero point $r\in \Gamma(\omega_c^{\otimes 2}(D))$, where $D=t_1+t_2$. For general $r$, the corresponding spectral curve $Y_r$ is a smooth curve of genus $3$ branched over $4$ distinct points 
\[
t_1, t_2, u_1, u_2 
\]
of $C$, where $u_1, u_2$ are zeros of $r$. In particular, we have $u_1+u_2\sim t_1+t_2$ and then we let $\rho=\pi(u_i)\in \P^1$ play the role of the sixth branch point  of the spectral curve $X_s$ over $\P^1$.   We can choose local coordinates where 
\begin{displaymath}
\left\{ \begin{array}{ll}
X_s = \{(x,y)\quad : \quad y^2 = x(x-1)(x-\lambda)(x-t)(x-\rho)  \}\\
Y_r = \{ (x,y,z)\quad : \quad y^2 = x(x-1)(x-\lambda) \;\;\text{and}\;\; z^2 = (x-t)(x-\rho)\}
\end{array} \right. 
\end{displaymath}
and both curves fit into the commutative diagram below
\begin{eqnarray}\label{diagetale}
\xymatrix{
 &   & Y_r \ar[lld]_{\text{2:1 cover $q_r$}} \ar[d]_{2:1} \ar[rrd]^{{\text{2:1 \'etale cover $\xi_s$}}}  & &\\
C \ar[rrd]_{2:1}&  &   \P^1_{\rho} \ar[d]_{2:1}  &  & X_s \ar[lld]^{\text{2:1 cover $q_s$}}\\
  &  & \mathbb P^1 & &\\
}
\end{eqnarray}
where 
\[
\P^1_{\rho} = \{ (x,z)\quad : \quad z^2=(x-t)(x-\rho)\}. 
\]
The elliptic cover $C\to \P^1$ admits an \'etale lifting 
\begin{equation}\label{mapxi}
\xi_s: Y_r \to X_s
\end{equation}
between corresponding spectral curves $Y_r$ and $X_s$  (of genus $3$ and $2$, respectively)  explicitly given by $(x,y,z)\mapsto (x,yz)$ in local coordinates.  See Figure~\ref{figram} and \cite[Section 5.2]{FL1}. Notice that $Y_r$ is a hyperelliptic curve.

A singular spectral curve $Y_r$ occurs  if either $u_1=u_2$ and lies in $\{w_0, w_1, w_{\lambda}, w_{\infty}\}$
or if  $\{u_1, u_2\} = \{t_1, t_2\}$. In the former case, the spectral curve has a single nodal point and in the second one it is a reducible  curve consisting of two copies of $C$ meeting in two points over $t_1, t_2$. The {\it locus of singular spectral curves} is  a union of five lines 
\[
\Gamma(\omega_{C}^{\otimes 2})  \cup_{j}\Gamma(\omega_{C}^{\otimes 2}(D-2w_j)) \subset \C^2
\]
where $j$ varies in the set  $\{0, 1, \lambda, \infty\}$. 

\subsection{Curve of eigenvectors}
Another distinguished curve is the curve $Y_{\theta}\subset \P E$ consisting of eigenvectors of $\theta$, for each $(E, {\bf p}, \theta)\in \mathcal H(C)$. There is a natural map $\frak{e}: Y_{\theta}\to Y_r$ which associates to each eigendirection the corresponding eigenvalue. Whether this map  resolves the singularities of $Y_r$ depends on whether $\theta$ is holomorphic at the parabolic points. This is the content of the next result.

\begin{prop}\label{prop:Cthetasing}
Assume the spectral curve $Y_r$ is nodal at a point $\tilde{u}$, over a parabolic point $u\in C$. Then the map $\frak{e}: Y_{\theta}\to Y_r$ resolves the nodal point if and only if $\theta$ is holomorphic at $u$. 
\end{prop}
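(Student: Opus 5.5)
The plan is a local computation near the parabolic point $u$, comparing the local equation of the spectral curve $Y_r$ with the local shape of the curve of eigenvectors $Y_\theta\subset\P E$. Choose a local coordinate $x$ centred at $u$ and a local frame $e_1,e_2$ of $E$ with $e_1(u)\in p_u$. Write
\[
\theta = A(x)\,\frac{dx}{x},
\]
where $A$ is a holomorphic traceless matrix and $A(0)=\mathrm{Res}(\theta,u)$ is nilpotent with $A(0)\cdot p_u=0$. Then $\det\theta=\det A(x)\,(dx/x)^{\otimes 2}$, and in the trivialisation of $\omega_C(D)$ given by $dx/x$ the spectral curve is locally
\[
Y_r:\ \mu^2+\det A(x)=0 .
\]
Nilpotence gives $\det A(0)=0$, so $\det A(x)=ax+bx^2+O(x^3)$. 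The hypothesis that $Y_r$ is nodal at $\tilde u$ (the point $\mu=0$, $x=0$) means exactly $a=0$ and $b\neq 0$; then $\mu=\pm\sqrt{-b}\,x(1+O(x))$ gives two smooth transverse branches. Over a punctured neighbourhood, $\frak e$ is a bijection between eigenlines of $A(x)$ and eigenvalues, since these are distinct. So the question is whether the two branches of $Y_\theta$ near $x=0$ (the closures of the eigenline branches $\ell_\pm(x)$) hit distinct points of $\P E_u$.

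\emph{Holomorphic case.} Here $A(0)=0$, so $A(x)=xB(x)$ with $\det B(0)=b\neq 0$ and $B(0)$ traceless. Hence $B(0)$ has two distinct eigenvalues $\pm\sqrt{-b}$ and two distinct eigenlines. The eigenlines of $A(x)$ are those of $B(x)$ and depend holomorphically on $x$. Therefore $Y_\theta$ consists locally of two disjoint smooth sections of $\P E\to C$, mapped by $\frak e$ isomorphically onto the two branches $\mu=\pm\sqrt{-b}\,x+\dots$. Thus $\frak e$ is the normalisation near $\tilde u$, and the node is resolved.

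\emph{Non-holomorphic case.} Here $A(0)\neq 0$ is nilpotent with one-dimensional kernel $p_u$. Since the eigenvalues $\pm\mu(x)$ tend to $0$, both eigenlines $\ell_\pm(x)=\ker(A(x)\mp\mu(x))$ tend to $\ker A(0)=p_u$. Concretely, in the frame with $A(0)=\begin{pmatrix}0&1\\0&0\end{pmatrix}$, solving for the eigenvector $(1,v)$ gives $v=O(x)$ on both branches. Hence both branches of $Y_\theta$ pass through the single point $p_u\in\P E_u$. Then $\frak e^{-1}(\tilde u)$ is one point at which two branches of $Y_\theta$ meet, so $Y_\theta$ is singular there and $\frak e$ does not separate the branches, i.e.\ it does not resolve the node. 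I will also check that $\frak e$ is a local isomorphism there, by computing $Y_\theta$ as $\{v:\ \gamma+(\dots)v-\beta v^2=0\}$ and seeing that it has a node at $v=0,x=0$.

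The main obstacle is the last point. I need to make precise that $Y_\theta$ is the closure of the eigenline graph and to control its local equation when $A(0)\neq0$, so as to conclude that $\frak e$ is not the normalisation at $\tilde u$. I will do this with the explicit quadratic equation in $v$, using $a=0$ (equivalently $\gamma(x)=O(x^2)$ up to the normalisation) to see the two branches meeting at $p_u$.
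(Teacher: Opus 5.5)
Your proposal is correct and follows essentially the paper's route: a local computation in a frame with $e_1(u)\in p_u$, split according to whether the residue vanishes. In the holomorphic case you get two disjoint eigenline sections mapping onto the two branches of $Y_r$; in the non-holomorphic case both eigenlines tend to $p_u$, so $Y_\theta$ has a node there and $\frak{e}$ is a local isomorphism onto $Y_r$. Your explicit use of the nodal hypothesis (vanishing of the linear term of $\det A$, i.e.\ $\gamma=O(x^2)$) to show $Y_\theta$ is singular at $p_u$ supplies a step the paper's short argument leaves implicit.
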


\proof
Let $x$ be a local coordinate around $u$.  We may assume $u=0$ and the Higgs field is 
\begin{eqnarray*}
\theta=\left(
\begin{array}{ccc} 
x\alpha & \beta  \\
x\gamma & -x\alpha  \\
\end{array}
\right)\frac{dx}{x}
\end{eqnarray*}
with parabolic direction corresponding to the vector $(1,0)$.  Here, $\alpha, \beta, \gamma$ are holomorphic at $x=0$. The local equation for the spectral curve is $Y_r = \{z^2 - (x^2\alpha +x\beta\gamma)=0 \}$, where $z$ represents a coordinate for the fiber.  

The Higgs field $\theta$ is holomorphic at $x=0$ if and only if $x$ divides $\beta$, i.e, $\beta = x\tilde{\beta}$. Assuming $\theta$ is holomorphic at $x=0$,  the local equation for  $Y_{\theta}$ is $\{\tilde{\beta}z^2+2\alpha z-\gamma=0\}$. This is singular when $\alpha^2+\tilde{\beta}\gamma=0$. But $r=\det\theta = x^2 (\alpha^2+\tilde{\beta}\gamma)$, then $Y_{\theta}$ is smooth because $Y_r$ is nodal.   Reciprocally, assume that $\theta$ is not holomorphic at $x=0$, then  $\beta(0)\neq 0$. This implies that $Y_{\theta}$ has local equation   $\{\beta z^2+2x\alpha z-x\gamma=0\}$, then it is singular. 
\endproof

\begin{cor}\label{cor:Cthetasing}
Let $(E, {\bf p}, \theta)\in \mathcal H (C)$ with $\det(\theta) = r$.  Suppose $Y_r$ is a nodal reducible spectral curve, consisting of two copies of $C$ meeting in two points $\tilde{t}_1$ and $\tilde{t}_2$, lying respectively over $t_1$ and $t_2$. Then one of the following possibilities holds  
\begin{enumerate}
\item  $\theta$ is holomorphic at both $t_1$ and $t_2$, and $Y_{\theta}$ is a disjoint  union of two copies of $C$; or 
\item $\theta$ has poles at both $t_1$ and $t_2$ and $Y_{\theta}$ is isomorphic to $Y_r$; or
\item $\theta$ is holomorphic at $t_1$ and has a pole at $t_2$ or vice versa, and $Y_{\theta}$ is a union of two copies of $C$ intersecting at the parabolic point where $\theta$ has a pole. 
 \end{enumerate} 
\end{cor}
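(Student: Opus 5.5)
The corollary is essentially a local statement at each of the two nodes $\tilde t_1,\tilde t_2$ of $Y_r$, so I would derive it from Proposition~\ref{prop:Cthetasing} applied separately at $t_1$ and at $t_2$, and then assemble the global picture. First I check the hypotheses of that proposition. In the reducible case we have $\{u_1,u_2\}=\{t_1,t_2\}$, so $r$ vanishes at both parabolic points. Since $r$ is a section of $\omega_C^{\otimes 2}(D)$, this means $r\in\Gamma(\omega_C^{\otimes 2})\simeq\C$, i.e. $r=c\,(dx/y)^{\otimes 2}$ with $c\neq 0$. Hence $Y_r=\{z^2=c\}$ is the union of the two sections $z=\pm\sqrt c$ of the total space of $\omega_C(D)$. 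Viewed as sections of $\omega_C(D)$, these two copies of $C$ meet exactly where the inclusion $\omega_C\hookrightarrow\omega_C(D)$ vanishes, namely transversally over $t_1$ and $t_2$. So $Y_r$ has exactly two nodes, $\tilde t_1$ and $\tilde t_2$, each lying over a parabolic point, and Proposition~\ref{prop:Cthetasing} applies at each of them.

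Next I describe the map $\frak e:Y_\theta\to Y_r$ away from and at the nodes. Away from $\tilde t_1,\tilde t_2$ the eigenvalues of $\theta$ are distinct, so $\frak e$ is an isomorphism there. Over a node $\tilde t_i$, Proposition~\ref{prop:Cthetasing} says that $\frak e$ separates the two branches exactly when $\theta$ is holomorphic at $t_i$. In the non-holomorphic case, the local equation $\beta z^2+2x\alpha z-x\gamma=0$ with $\beta(0)\neq0$ shows that $Y_\theta$ still has a singular point over $t_i$. Globally, $\frak e$ is finite and birational, and the normalization of $Y_r$ is $C\sqcup C$. Therefore $Y_\theta$ is obtained from $C\sqcup C$ by re-gluing the pair of points over $t_i$ precisely for those $i$ at which $\theta$ has a pole.

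The only step that needs care is checking that in the non-holomorphic case $\frak e$ is a local isomorphism at the node, rather than producing some other singularity. I would verify this directly. Completing the square with $\beta(0)\neq 0$ and $x\mid\det\theta$ turns the equation into the form $w^2=x^2\cdot(\text{unit})$. This is an ordinary node, and $\frak e$ maps it isomorphically onto the node of $Y_r$.

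Enumerating the four holomorphic/pole patterns at $(t_1,t_2)$ then gives the three cases of the statement:
\begin{enumerate}
\item holomorphic at both points gives $C\sqcup C$;
\item a pole at both points gives $Y_\theta\simeq Y_r$;
\item a pole at exactly one point gives two copies of $C$ meeting only over that point.
\end{enumerate}
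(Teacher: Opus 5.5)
Your proposal is correct and follows the paper's route: the paper says only that the corollary follows immediately from the proposition on the curve of eigenvectors applied at $\tilde t_1$ and $\tilde t_2$, and your argument (local analysis at each node, then reassembly from the normalization $C\sqcup C$) is that argument written out. Your completing-the-square check, showing that $\frak e$ is a local isomorphism at a node where $\theta$ has a pole (indeed $\beta z+x\alpha$ is exactly the eigenvalue), supplies a detail the paper leaves implicit. The paper's proposition only asserts that $Y_\theta$ is singular there, and your check is what justifies the isomorphism claims in cases (2) and (3).
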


\proof
It follows immediately from  Proposition~\ref{prop:Cthetasing}.  
\endproof

The three cases of Corollary~\ref{cor:Cthetasing} arise in our moduli space. In case (1), the curve $Y_{\theta}\subset \P E$,  two disjoint copies of $C$ correspond to two line subbundles $L$ and $L^{-1}$ that decomposes $E$ and the Higgs field $\theta$ is diagonal. Case (2) can be obtained from case (1) after an elementary transformation $[\otimes\mathcal O_C(w_{\infty})]\circ \elem_{t_1+t_2}: \mathcal H(C) \to \mathcal H(C)$ (twist by a square root $\mathcal O_C(w_{\infty})$ of $\mathcal O_C(t_1+t_2)$ is necessary in order to redress the determinant).    Case (3) is subtle, because the Higgs bundle turns out to be strictly semistable, this is the content of the next result. 

\begin{prop}\label{prop:stsemi}
Suppose we are in case (3) of Corollary~\ref{cor:Cthetasing}. Then there exists a degree zero {\bf invariant} line subbundle $L\subset E$ passing through the parabolic direction over which  $\theta$ has a pole. In particular, $(E, {\bf p}, \theta)$ is strictly semistable. 
\end{prop}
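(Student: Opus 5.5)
Say $\theta$ is holomorphic at $t_1$ and has a pole at $t_2$; the other case follows by swapping the roles of $t_1,t_2$. By Corollary~\ref{cor:Cthetasing}, the curve of eigenvectors is $Y_\theta = C_1\cup C_2\subset \P E$, two copies of $C$ meeting at one point over $t_2$. Each $C_k$ maps isomorphically to $C$ under $\P E\to C$, so it is a section of $\P E$ and corresponds to a line subbundle $L_k\subset E$. Since $C_k$ consists of eigendirections, $L_k$ is $\theta$-invariant. The plan is to show that $\deg L_k=0$, that $L_k$ passes through $p_{t_2}$, and that ${\rm Stab}_{{\bf a}_c}(L_k)=0$.

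\emph{Passing through $p_{t_2}$.} Work in the local model of Proposition~\ref{prop:Cthetasing} at $t_2$. Write
\[
\theta=\begin{pmatrix} x\alpha & \beta\\ x\gamma & -x\alpha\end{pmatrix}\frac{dx}{x}
\]
with $\beta(0)\neq 0$ and parabolic direction $(1,0)$. The local equation of $Y_\theta$ is $\beta z^2+2x\alpha z-x\gamma=0$, where $z$ is the affine coordinate on $\P E$. At $x=0$ this gives $z^2=0$, so the single point of $Y_\theta$ over $t_2$ is $z=0$, which is the direction $p_{t_2}$. Both branches $C_1,C_2$ go through this node, so both $L_1$ and $L_2$ contain $p_{t_2}$. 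At $t_1$, $\theta$ is holomorphic and $Y_\theta$ is smooth there, so $C_1$ and $C_2$ are disjoint over $t_1$. The residue at $t_1$ is nilpotent; since $\theta$ is holomorphic at $t_1$ it vanishes, so a priori the residue gives no constraint on $p_{t_1}$.

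\emph{Degrees.} $L_1$ and $L_2$ are distinct subbundles with $L_1\cap L_2=0$ generically. The natural map $L_1\oplus L_2\to E$ is injective, and it degenerates exactly where the two sections meet, which is the single transverse point over $t_2$. Hence $\deg L_1+\deg L_2=\deg E-1=-1$. Equivalently, $C_1\cdot C_2=1$ on the ruled surface $\P E$.

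\emph{Stability count.} For an invariant $L$ containing exactly $k$ of the two parabolic directions,
\[
{\rm Stab}_{{\bf a}_c}(L)=-2\deg L+1-k .
\]
$L_1$ and $L_2$ cannot both contain $p_{t_1}$, because $C_1,C_2$ are disjoint over $t_1$. Suppose $L_1$ contains both directions. Semistability gives $-2\deg L_1-1\ge 0$, so $\deg L_1\le -1$. Then $\deg L_2=-1-\deg L_1\ge 0$, while $L_2$ contains exactly one direction, and semistability gives $\deg L_2\le 0$. So $\deg L_2=0$ and ${\rm Stab}(L_2)=0$. Now suppose neither $L_k$ contains $p_{t_1}$. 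Each contains exactly one direction, so $\deg L_k\le 0$ for both, which contradicts $\deg L_1+\deg L_2=-1$ only if we can exclude degrees $0$ and $-1$. With degrees $0$ and $-1$, the degree-zero bundle has ${\rm Stab}=0$. In every case we obtain a degree-zero invariant subbundle through $p_{t_2}$ with ${\rm Stab}_{{\bf a}_c}=0$, which shows that $(E,{\bf p},\theta)$ is strictly semistable.

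The main obstacle is the degree identity $\deg L_1+\deg L_2=-1$. It needs care: I must check that the intersection over $t_2$ is transverse and of multiplicity one in $\P E$, using the local equation with $\beta(0)\neq0$ and the fact that $Y_r$ has an ordinary node there. I must also check that $C_1$ and $C_2$ do not meet anywhere else, which holds since $Y_\theta\to Y_r$ is an isomorphism away from $t_2$ and the copies of $C$ in $Y_r$ meet only over $t_1,t_2$, while they are resolved over $t_1$.
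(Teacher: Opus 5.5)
Your argument is correct and is essentially the paper's proof. The paper separates the two eigen-subbundles by performing $\elem_{t_2}$ at the pole, which gives $E'=L\oplus M$ with $\det E'=\mathcal O_C(-t_2)$; this is the same degree identity $\deg L+\deg M=-1$ that you obtain from $C_1\cdot C_2=1$, and the paper then concludes with the same stability count, which your case analysis spells out in full. One small point of wording: when neither subbundle contains $p_{t_1}$ there is nothing to ``exclude'', since the bounds $\deg L_k\le 0$ together with $\deg L_1+\deg L_2=-1$ directly force the degrees to be $0$ and $-1$.
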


\proof
We assume that $\theta$ is holomorphic at $t_1$ and has a pole at $t_2$, and $Y_{\theta}$ is a union of two copies of $C$ intersecting at the parabolic point over $t_2$.   The two copies of $C$ correspond to two line subbundles $L$ and $M$ of $E$. An elementary transformation $\elem_{t_2}$ at this parabolic direction will separate those subbundles and decompose the transformed vector bundle as $E'=L\oplus M$. Since $\det E'= \mathcal O_C(-t_2)$, we conclude that $M=L^{-1}(-t_2)$.  Equivalently, we have $L = M^{-1}(-t_2)$. 

Both line bundles are invariant under $\theta$, so applying the stability conditions ${\rm Stab}_{{\bf a}_c} (L) \ge 0$ and ${\rm Stab}_{{\bf a}_c} (M) \ge 0$, with weight vector ${\bf a}_c = (\frac{1}{2}, \frac{1}{2})$,  we get either $\deg L =0$ or $\deg M=0$.  
\endproof

The next remark will be useful in the sequel. 

\begin{remark}\rm\label{rmk:case3cor}
Each Higgs bundle $(E, {\bf p}, \theta)$ of case (3) of Corollary~\ref{cor:Cthetasing} can be viewed as a limit of Higgs bundles of case (1).  Indeed, such a strictly semistable Higgs bundle (cf. Proposition~\ref{prop:stsemi}) is $S$-equivalent to a diagonal Higgs bundle $(L\oplus L^{-1}, {\bf p}, \theta_L\oplus \theta_{E/L})$ with $\deg L=0$. Moreover, the parabolic direction over which $\theta$ has a pole lies inside $L$, so the induced Higgs field $\theta_L\oplus \theta_{E/L}$ is holomorphic. This Higgs bundle can then be obtained as limit of Higgs bundles in case (1) by varying the parabolic direction, forcing the two disjoint copies of $C$ to intersect in the limit.  

A preliminary analysis might suggest that such a strictly semistable Higgs bundle of case (3) does not lie in the image of our modular map $\Phi$, since $\theta$ is holomorphic at one of the parabolic points and has a pole at the other. One might then expect  that such a Higgs bundle is not invariant under the elliptic involution $\sigma: C \to C$. However,  $\theta$ and $\sigma^*(\theta)$ are $S$-equivalent, then this Higgs bundle is invariant under elliptic involution and lies in the image of $\Phi$.  
\terminou
\end{remark}

\subsection{Smooth Hitchin fibers} 
Now, we will proceed with the discussion on the variation of the modular map $\Phi$ with respect to the Hitchin fibration. Recall that $\Phi$ is defined as 
\[
\Phi(E, {\bf p}, \theta) = \mathcal O_{C}(2w_{\infty})\otimes \elem_R \circ\pi^*(E, {\bf p}, \theta)
\]
where $R=w_0+w_1+w_{\lambda}+w_{\infty}$ is the ramification locus of our elliptic cover $\pi:C\to \P^1$. This last is branched over $B = 0+1+\lambda+\infty$. In addition,  $\Phi$ fits into a commutative diagram 
\[
\xymatrix { 
 \mathcal H(\P^1) \ar@{->}[d]_{\det} \ar@{->}[r]^{\Phi}  &  \mathcal H(C) \ar@{->}[d]^{\det} \\
\C^2 \ar@{->}[r]^{\pi^*}    &        \C^2
}
\]
and then it preserves the Hitchin fibrations.

Let us first assume that the spectral curve $Y_r$ (over $C$) is smooth and irreducible. It is a hyperelliptic curve of genus $3$.  The fiber ${\rm det}^{-1}(r)$ of ${\rm det}:\mathcal H(C) \to \C^2$  is an Abelian variety isomorphic to the Prym variety
\[
{\rm Prym}(Y_r/C) = \left\{ M\in {\rm Pic}^{2}(Y_r)\;:\;\; \det((q_r)_*M)=\mathcal O_C\right\}
\]
where $q_r: Y_r \to C$ is the 2:1 cover branched over $t_1, t_2, u_1, u_2$. This is an irreducible Abelian variety, see \cite[(iv)  p. 329]{Mu}.  On the $\P^1$ side, the spectral curve $X_s$, with $\pi^*(s)=r$, is also smooth and irreducible of genus $2$. The fiber ${\rm det}^{-1}(s)$ of ${\rm det}:\mathcal H(\P^1) \to \C^2$  is an Abelian variety isomorphic to ${\rm Pic}^3(X_s)$.  The restriction of $\Phi$ to a Hitchin fiber induces a map
\begin{eqnarray*}
\Phi_{0,s}: {\rm Pic}^3(X_s) \to {\rm Prym}(Y_r/C)
\end{eqnarray*} 
described in \cite[Corollary 5.2]{FL1}.

\begin{thm}{\rm (\cite[Corollary 5.2]{FL1})}\label{thm:restgenfiber}
The restriction of  $\Phi:\mathcal H(\P^1) \to \mathcal H(C)$ to a smooth fiber of the Hitchin map is a morphism of degree two given by 
\begin{eqnarray*}
\Phi_{0,s}: {\rm Pic}^3(X_s) &\to& {\rm Prym}(Y_r/C)\\ 
             M &\mapsto& \xi_s^*(M) (S_0)
\end{eqnarray*} 
where $S_0 = q_r^*(2w_{\infty} - R)$.  

\end{thm}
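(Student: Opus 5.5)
This statement is quoted from \cite[Corollary 5.2]{FL1}; I would reprove it through the spectral (BNR) correspondence on both sides, tracing how $\Phi$ acts on eigenline bundles.

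\textbf{Step 1 (spectral data on $\P^1$).} Fix $(E,{\bf p},\theta)\in\det^{-1}(s)$ with eigenline bundle $M\in{\rm Pic}^3(X_s)$, so that $E\simeq (q_s)_*M$ twisted appropriately. The parabolic directions are the ramification points $x_0,x_1,x_\lambda,x_t,x_\infty$ of $q_s$.

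\textbf{Step 2 (the pullback).} Compute the pullback $\pi^*(E,\theta)$ on $C$. Its spectral curve is the fibre product $C\times_{\P^1}X_s$. This curve is singular over the points $w_i$, where both maps ramify, and its normalization is $Y_r$, with $\xi_s\colon Y_r\to X_s$ the induced étale map. By flat base change, the eigenline bundle of $\pi^*\theta$, pulled back to the normalization, is $\xi_s^*M$. However, $\pi^*E$ equals $(q_r)_*\xi_s^*M$ only up to modification at the points $w_i$: there the pullback Higgs field is tangent to the parabolic direction to order two.

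\textbf{Step 3 (effect of $\elem_R$ and the twist).} Work in local coordinates at $w_i$, with $x=\pi$-coordinate $=y^2$, and write the pulled-back Higgs field as in Proposition~\ref{prop:Cthetasing}. Performing $\elem$ at the parabolic direction makes the eigenvector curve equal to the smooth curve $Y_r$. It also modifies the eigenline by $\mathcal O(\tilde w_i)$, where $\tilde w_i=q_r^{-1}(w_i)$. Summed over $i$, the transform has eigenline $\xi_s^*M\otimes\mathcal O_{Y_r}(-q_r^*R)$ up to a uniform twist of sign and normalization. The twist by $\mathcal O_C(2w_\infty)$ then contributes $q_r^*(2w_\infty)$, and this yields $\xi_s^*M(S_0)$.

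\textbf{Step 4 (degree and consistency).} Check degrees: $\deg \xi_s^*M=6$ and $\deg S_0=-4$, so the result lies in ${\rm Pic}^2(Y_r)$. The determinant condition $\det (q_r)_*(\cdot)=\mathcal O_C$ holds because the determinant of the constructed bundle is trivial by construction of $\Phi$. For the degree, $\xi_s^*\colon{\rm Pic}^3(X_s)\to{\rm Prym}$ is a homomorphism up to translation. Its kernel is generated by the line bundle $\eta$ defining the étale cover $\xi_s$, which is $2$-torsion. Hence the map has degree two. This is consistent with the involution $\elem_{0,1,\lambda,\infty}$, the translation by $\mathcal O(-\sum x_i)\otimes q_s^*\mathcal O(2)$, which should equal $\eta$.

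\textbf{Main obstacle.} I expect Step 3 to be the hardest: getting the exact local contribution of the elementary transformation at each ramification point, including its sign. I would settle it by an explicit local computation in the normal form $\theta=\begin{pmatrix}0&1\\ x\,f&0\end{pmatrix}dx$ pulled back by $x=y^2$.
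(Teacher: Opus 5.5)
\textbf{Context.} The paper gives no proof of this statement; it is imported from \cite[Corollary 5.2]{FL1}. So the only question is whether your reconstruction stands on its own.

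\textbf{Step 3 is not yet an argument.} Your overall route is the right one: flat base change to $C\times_{\P^1}X_s$, normalization to $Y_r$, let $\elem_R$ absorb the discrepancy over $R$, then count the kernel of $\xi_s^*$. But Step 3 carries the entire content of the formula, and as written it does not establish it.
\begin{itemize}
\item Locally you claim a change by $+\tilde w_i$, globally you write $-q_r^*R$, and you leave a ``uniform twist of sign and normalization'' undetermined.
\item The degree count in Step 4 only fixes the coefficient of $R$. It cannot rule out a degree-zero discrepancy.
\end{itemize}
The confusion comes from mixing two conventions. ${\rm Pic}^3(X_s)$ and ${\rm Prym}(Y_r/C)$ parametrize the spectral sheaf $M$ with $(q_s)_*M\simeq E$ (resp.\ $(q_r)_*M\simeq E$). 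They do not parametrize the kernel eigenline $N\simeq M\otimes q_s^*(\omega_{\P^1}(\Lambda))^{-1}$.

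For $N$, the elementary transformation at $w_i$ changes nothing. The correct local model is $\begin{pmatrix}0&1\\ xf&0\end{pmatrix}\frac{dx}{x}$; with your $dx$ there is no pole and no parabolic direction to transform. Its eigenvector is $e_1+\mu e_2$ with $\mu^2=xf$. After $x=y^2$, in the frame $e_1'=e_1$, $e_2'=ye_2$ of $\elem$, this equals $e_1'\pm\sqrt f\,e_2'$, which is still a generator. The correction is instead hidden in the change of twisting bundle, from $\pi^*\omega_{\P^1}(\Lambda)=\omega_C(R+D)$ to $\omega_C(D)$. So neither convention gives $+\tilde w_i$.

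\textbf{The missing step, in the $M$ convention.} Let $Z=C\times_{\P^1}X_s$, with projections $p_1,p_2$ and normalization $\nu\colon Y_r\to Z$, so that $q_r=p_1\nu$ and $\xi_s=p_2\nu$.
\begin{itemize}
\item Flat base change gives $\pi^*E\simeq(p_1)_*p_2^*M$.
\item $p_2^*M\subset\nu_*\nu^*p_2^*M$ has colength one at each of the four nodes of $Z$, which lie over the $w_i$. Hence $\pi^*E\subset(q_r)_*\xi_s^*M$ with cokernel $\bigoplus_i\C_{w_i}$, compatibly with the Higgs fields (multiplication by the tautological section $\mu$).
\item Near $i\in\{0,1,\lambda,\infty\}$, $E$ has local basis $m,\mu m$ with $\mu^2=xf$ in the trivialization $\frac{dx}{x}$. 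The residue sends $m\mapsto\mu m\mapsto 0$, so the parabolic line is spanned by $\mu m$.
\item On the two branches of $Y_r$ over $w_i$, $m$ becomes $(1,1)$ and $\mu m$ becomes $(y\sqrt f,-y\sqrt f)$. So the parabolic line is exactly the kernel of $(\pi^*E)_{w_i}\to((q_r)_*\xi_s^*M)_{w_i}$.
\end{itemize}
It follows that $\elem_R(\pi^*E)=((q_r)_*\xi_s^*M)(-R)=(q_r)_*\bigl(\xi_s^*M\otimes q_r^*\mathcal O_C(-R)\bigr)$. The projection formula, applied to the twist by $\mathcal O_C(2w_\infty)$, then gives $\xi_s^*M(S_0)$ with no undetermined factor. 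The same computation shows that $\mu\,\frac{dx}{x}=\pm 2\sqrt f\,dy$ becomes a section of $q_r^*\omega_C(D)$, so $Y_r$ is indeed the spectral curve of the image.

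\textbf{Step 4 is correct.} You should make two points explicit.
\begin{itemize}
\item $\eta\neq\mathcal O_{X_s}$ because $Y_r$ is connected, so $\ker(\xi_s^*|_{{\rm Pic}^0})=\{\mathcal O,\eta\}$.
\item The $2$-dimensional image fills ${\rm Prym}(Y_r/C)$, which is irreducible of dimension $g(Y_r)-g(C)=2$ because $q_r$ is ramified.
\end{itemize}
Your consistency check is actually an identity. $\xi_s$ is obtained by adjoining $y$ with $y^2=x(x-1)(x-\lambda)$, whose divisor on $X_s$ is $2(x_0+x_1+x_\lambda)-6x_\infty$. So $\eta\simeq\mathcal O_{X_s}(x_0+x_1+x_\lambda-3x_\infty)$, and since it is $2$-torsion this equals $\mathcal O_{X_s}(-\sum x_i)\otimes q_s^*\mathcal O_{\P^1}(2)$. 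This identifies the fibres of $\Phi_{0,s}$ with the orbits of $\elem_{0,1,\lambda,\infty}$.
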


\subsection{Singular Hitchin fibers}

The elliptic cover $\pi: C\to \P^1$ induces a linear isomorphism between Hitchin basis 
\[
\pi^*:\Gamma(\omega_{\P^1}^{\otimes 2}(B+t))\simeq \C^2  \to \Gamma(\omega_{C}^{\otimes 2}(D))\simeq\C^2
\]
preserving the locus of singular spectral curves: a union of $5$ lines in both sides. For each spectral curve $X_s$ (over $\P^1$) having a single nodal point over $\rho\in\P^1$, for some $\rho\in\{0,1,\lambda, \infty\}$, the companion spectral curve $Y_r$ (over $C$), with $r=\pi^*(s)$, is irreducible and has a single nodal point over $w_{\rho}$.  But if $\rho=t$ then the corresponding $Y_r$ is reducible and has two nodal points: $t_1$ and $t_2$. 

We now assume that either $Y_r$ is irreducible and has a single nodal point over $w_{\rho}$, $\rho\in \{0,1,\lambda, \infty\}$ or it is reducible and has two nodal points over $t_1, t_2$. Our next goal is to conclude our analysis of all singular Hitchin fibers. 

\subsubsection{The fiber over an irreducible spectral curve} Let us assume that $Y_r$ is irreducible and has a single nodal point over $w_{\rho}$. We note that for each Higgs bundle $(E, {\bf p}, \theta)$  in ${\rm det}^{-1}(r)$, the parabolic structure ${\bf p}$ is determined by $\theta$. In fact, the residual part ${\rm Res}(\theta, t_i)$ is invertible, because otherwise $Y_r$ would have a singular point at $t_i$. Then by BNR correspondence, the fiber ${\rm det}^{-1}(r)$ is isomorphic to 
\[
\overline{{\rm Prym}}(Y_r\setminus C) = \left\{ M\in \overline{{\rm Pic}^{2}}(Y_r)\quad :\quad \det((q_r)_*M)=\mathcal O_C\right\}.
\]

\begin{remark}\rm\label{rmk:GO}
Let the spectral curve $Y_r$ be irreducible with a single nodal point over $w_\rho$. Following the terminology of \cite{GO}, ${\rm det}^{-1}(r)$ can be identified with a fiber of the Hitchin map defined on the moduli space of $L$-twisted Higgs pairs $(E, \theta)$, with $L=\omega_c(t_1+t_2)$. Indeed, as mentioned above, for each Higgs bundle $(E, {\bf p}, \theta)$  in ${\rm det}^{-1}(r)$, the parabolic structure ${\bf p}$ is determined by $\theta$.

We claim that ${\rm det}^{-1}(r)$ is irreducible. Let us first remark that ${\rm Prym}(Y_r\setminus C)$ is irreducible, where ${\rm Prym}(Y_r\setminus C)$ denote the variety formed by line bundles $M\in {\rm Pic}^{2}(Y_r)$ such that $\det((q_r)_*M)=\mathcal O_C$. To see this, first note that if $\tilde{q}_r: \tilde{Y}_r \to Y_r$ is a resolution of $Y_r$ and $\overline{q}_r=q_r\circ \tilde{q}_r$, then $\overline{q}_r:\tilde{Y}_r\to C$ ramifies over $t_1$ and $t_2$. Hence, by \cite[(iv)  p. 329]{Mu}, ${\rm Prym}(\tilde{Y}_r\setminus C)$ is irreducible, and therefore  it follows from \cite[Proposition 4.5]{GO} that ${\rm Prym}(Y_r\setminus C)$ is irreducible. By \cite[Theorem 6.1]{GO}, the fiber ${\rm det}^{-1}(r)$ is isomorphic to  the closure of ${\rm Prym}(Y_r\setminus C)$ in $\overline{{\rm Pic}^{2}}(Y_r)$,  and this proves the claim above. 
\terminou
\end{remark}

The modular map $\Phi$ sends the fiber ${\rm det}^{-1}(s)$ on the $\P^1$ side into the fiber ${\rm det}^{-1}(r)$ on the elliptic side. It follows from \cite[Theorem 5.4]{FL2} that 
\[
{\det}^{-1} (s) = {\bf F}_{hol}\cup{\bf F}_{app}
\]
where ${\bf F}_{hol}$ and  ${\bf F}_{app}$ are  isomorphic to the desingularization of the compactified Jacobian $\overline{{\rm Pic}}^0(X_s)$. Such a desingularization is  a decomposable $\P^1$-bundle over the resolution $\tilde{X_s}$ of the nodal spectral curve $X_s$
\begin{eqnarray}\label{isoFholF}
{\bf F}_{hol} \simeq \P(\mathcal O_{\tilde{X_s}}(w_{\rho}^+)\oplus\mathcal O_{\tilde{X_s}}(w_{\rho}^-))
\end{eqnarray}
where $w_{\rho}^+$ and $w_{\rho}^-$ lie over the node of $X_s$. The structure of $\P^1$-bundle of ${\bf F}_{hol}$ is given by the map which forgets the parabolic structure. Aside from the node over $\rho\in\{0,1,\lambda, \infty\}$, the spectral curve $X_s$ ramifies over $\{0,1,\lambda, \infty, t\}\setminus \{\rho\}$. Then its desingularization $\tilde{X_s} \to X_s$ is an elliptic curve and the composition map $\tilde{X_s}\to \P^1$ ramifies over $\{0,1,\lambda, \infty, t\}\setminus \{\rho\}$. In particular, if we let $\tilde{w}_k$, $k\in\{0,1,\lambda, \infty, t\}\setminus \{\rho\}$, denote those ramification points of $\tilde{X_s}$, then they satisfy 
\begin{eqnarray}\label{wtilde}
3\tilde{w}_t\sim \tilde{w}_{k_1}+\tilde{w}_{k_2}+\tilde{w}_{k_3}
\end{eqnarray}
with $\{k_1, k_2, k_3\} = \{0, 1, \lambda, \infty\}\setminus\{\rho\}$.

\begin{thm}\label{thm:translat}
Assume that the spectral curve $Y_r$ is irreducible and has a single nodal point over $w_j$, for some $j\in\{0, 1, \lambda, \infty\}$. The restriction of $\Phi$ to ${\bf F}_{hol}$ gives a desingularization ${\bf F}_{hol}\to {\rm det}^{-1}(r)$ of the Hitchin fiber over $r$. Moreover, via the isomorphism (\ref{isoFholF}), the fiber ${\rm det}^{-1}(r)$ is obtained identifying the $0$-section $\sigma_0$ with the $\infty$-section $\sigma_{\infty}$ of $\P(\mathcal O_{\tilde{X_s}}(w_{\rho}^+)\oplus\mathcal O_{\tilde{X_s}}(w_{\rho}^-))$ via the translation $\mathcal O_C(\tilde{w}_t-w_{\rho}^-)$. 
\end{thm}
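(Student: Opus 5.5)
We combine the structure of $\det^{-1}(s)={\bf F}_{hol}\cup{\bf F}_{app}$ from \cite[Theorem 5.4]{FL2} with the fact that $\Phi$ is a degree two morphism (Theorem~\ref{thm:morphismPhi_0}) whose deck involution is $\elem_{0,1,\lambda,\infty}$, which swaps ${\bf F}_{hol}$ and ${\bf F}_{app}$.

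\emph{Step 1: $\Phi(\det^{-1}(s))=\det^{-1}(r)$.} $\Phi$ commutes with the Hitchin maps, and $\pi^*$ is an isomorphism of the bases, so $\Phi(\det^{-1}(s))\subset\det^{-1}(r)$. Since $\Phi$ is proper, the image is closed. The fiber $\det^{-1}(r)\simeq\overline{\rm Prym}(Y_r\setminus C)$ is irreducible of dimension two by Remark~\ref{rmk:GO}. It follows that $\Phi(\det^{-1}(s))=\det^{-1}(r)$.

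\emph{Step 2: $\Phi|_{{\bf F}_{hol}}$ is birational.} Because $\Phi\circ\elem_{0,1,\lambda,\infty}=\Phi$ and the involution swaps the two components, we get $\Phi({\bf F}_{hol})=\Phi({\bf F}_{app})=\det^{-1}(r)$. The map $\Phi$ has degree two overall. A general point of $\det^{-1}(r)$ therefore has exactly two preimages, one in each component, so $\Phi|_{{\bf F}_{hol}}$ has degree one. Over the locus of line bundles (the open part of ${\bf F}_{hol}\simeq \P(\mathcal O(w_\rho^+)\oplus\mathcal O(w_\rho^-))$ away from $\sigma_0,\sigma_\infty$), the formula of Theorem~\ref{thm:restgenfiber} extends: $M\mapsto \xi_s^*M(S_0)$, with $\xi_s$ the \'etale double cover of nodal curves given by diagram \eqref{diagetale}. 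This map is injective modulo the involution, which gives an isomorphism onto the open part of $\overline{\rm Prym}$. Since ${\bf F}_{hol}$ is smooth, $\Phi|_{{\bf F}_{hol}}$ is a desingularization.

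\emph{Step 3: identifying the two boundary sections.} The boundary of ${\bf F}_{hol}$ consists of $\sigma_0\cup\sigma_\infty$. These are the torsion-free sheaves that fail to be locally free at the node, and each section is a copy of $\tilde X_s$. Their images are the non-locally-free sheaves on $Y_r$ at its single node over $w_\rho$, which form one copy of an elliptic curve: the pushforwards from the partial normalization $\tilde Y_r$, with the Prym condition imposed. We plan to compute $\Phi$ on each section, viewing a point of the section as $\nu_*N$ with $N\in{\rm Pic}(\tilde X_s)$, or equivalently via the corresponding Higgs bundle which is apparent or holomorphic at $\rho$. Both sections map isomorphically onto this curve, so $\Phi$ induces an identification $\sigma_0\to\sigma_\infty$. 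We expect this identification to be a translation on $\tilde X_s$.

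\emph{Step 4: determining the translation.} This is the main obstacle. The approach is to take a point $N\in\sigma_0$ and find $N'\in\sigma_\infty$ with the same image. Using $\Phi=\Phi\circ\elem_{0,1,\lambda,\infty}$, the point $N'$ is the $\elem_{0,1,\lambda,\infty}$-image of some point of ${\bf F}_{app}$. Translating through the isomorphism ${\bf F}_{app}\simeq{\bf F}_{hol}$ from \cite{FL2}, the translation by $-\sum x_i+q_s^*\mathcal O(2)$ acts on $\tilde X_s$ as a translation by a combination of the $\tilde w_k$ and $w_\rho^\pm$. We would simplify this combination using relation \eqref{wtilde}, $3\tilde w_t\sim\sum\tilde w_{k_i}$, together with $w_\rho^++w_\rho^-\sim 2\tilde w_t$. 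The target is a total shift of $\mathcal O(\tilde w_t-w_\rho^-)$. The delicate points are tracking which of $\sigma_0,\sigma_\infty$ corresponds to $w_\rho^\pm$ in \eqref{isoFholF}, and handling the normalization convention that shifts degree at the node.
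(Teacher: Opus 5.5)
Your overall route is the paper's. You get birationality from $\deg\Phi=2$, the swap of ${\bf F}_{hol}$ and ${\bf F}_{app}$ by $\elem_I$, and the irreducibility of $\det^{-1}(r)$; then a BNR computation of $\elem_I$ on $\tilde{X_s}$, simplified by $3\tilde w_t\sim\tilde w_{k_1}+\tilde w_{k_2}+\tilde w_{k_3}$, should give the gluing. But the only substantive part of the statement, the translation $\mathcal O(\tilde w_t-w_\rho^-)$, appears in Step~4 as a target rather than being derived, and the mechanism that produces it is missing. Two observations close the gap.

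First, $\sigma_0\cup\sigma_\infty={\bf F}_{hol}\cap{\bf F}_{app}$, so a point $N\in\sigma_0$ lies in both components. Hence its partner under $\Phi$ is simply $\elem_I(N)$. This also proves your Step~3 claim that $\sigma_0$ and $\sigma_\infty$ have the same image, which you only assert.

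Second, compute $\elem_I(N)$ after forgetting the parabolic direction at $\rho$; this is harmless, since the Higgs field is holomorphic there. Then $(E,\theta)$ is a point of the four-pointed Hitchin fibre, which BNR identifies with line bundles $M$ on $\tilde{X_s}$, and $E_\rho=M_{w_\rho^+}\oplus M_{w_\rho^-}$ splits into the two eigenlines.
\begin{itemize}
\item The elementary transformation centred at $p_\rho^+$ replaces $M$ by $M(-w_\rho^-)$. It makes the new parabolic direction the $w_\rho^-$-eigenline, so $N$ lands in $\sigma_\infty$.
\item The three ramification points over $\{0,1,\lambda,\infty\}\setminus\{\rho\}$ contribute $-\tilde w_k$.
\item The twist by $\mathcal O_{\P^1}(2)$ contributes $+4\tilde w_t$.
\end{itemize}
Together these give $M\mapsto M(\tilde w_t-w_\rho^-)$.

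The smooth-case formula $-\sum x_i+q_s^*\mathcal O(2)$ cannot be used verbatim, because $\tilde{X_s}$ has no ramification point over $\rho$. Replacing $x_\rho$ by $w_\rho^-$ rather than $w_\rho^+$ is exactly the point you left open. It matters: $\tilde w_t-w_\rho^+\sim-(\tilde w_t-w_\rho^-)$, so the other choice gives the inverse identification.

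There is also a misidentification in Steps~2 and~3. The points of ${\bf F}_{hol}\setminus(\sigma_0\cup\sigma_\infty)$ do not correspond to line bundles on $X_s$. Their Higgs fields are holomorphic at $\rho$, so their eigencurve is the smooth curve $\tilde{X_s}$. Their spectral sheaf is therefore the pushforward of a line bundle from $\tilde{X_s}$, and it is constant along each $\P^1$-fibre. Line bundles on the nodal curve $X_s$ correspond instead to ${\bf F}_{app}\setminus{\bf F}_{hol}$ (cf.\ \cite[Remark 5.5]{FL2}). Likewise, it is in ${\bf F}_{app}$, not ${\bf F}_{hol}$, that $\sigma_0\cup\sigma_\infty$ is the non-locally-free locus.

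So the extension of $M\mapsto\xi_s^*M(S_0)$ would have to be run on ${\bf F}_{app}$ and transported to ${\bf F}_{hol}$ using $\Phi\circ\elem_I=\Phi$. Alternatively, drop it: your first argument (degree two, the swap, irreducibility of $\det^{-1}(r)$) already gives birationality, and that is all the paper uses.
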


\proof
In order to simplify the notation we assume $\rho=0$. The modular map $\Phi$ has degree $2$ and sends ${\bf F}_{hol}\cup{\bf F}_{app}$ to ${\rm det}^{-1}(r)$. By \cite[Theorem 5.4]{FL2}, the involution $\elem_I: \mathcal H({\P^1}) \to \mathcal H({\P^1})$, $I=\{0, 1, \lambda, \infty\}$, of the cover $\Phi: \mathcal H({\P^1}) \to \mathcal H({C})$  gives an isomorphism between ${\bf F}_{hol}$ and ${\bf F}_{app}$, then $\Phi$ induces a birational morphism ${\bf F}_{hol} \to {\rm det}^{-1}(r)$. Moreover, ${\rm det}^{-1}(r)$ is irreducible, see Remark~\ref{rmk:GO}. This concludes the first part of the statement. 

By \cite[Theorem 5.4]{FL2},  ${\bf F}_{app}$ intersects ${\bf F}_{hol}$ in  $\sigma_0\cup \sigma_{\infty}$. We now discuss the action of $\elem_I$ on this intersection. For this, recall that  ${\bf F}_{hol}$ is formed by $(E, \theta, {\bf p})\in \mathcal H(\P^1)$ such that $\det(\theta) = s$ and $\theta$ is holomorphic at $\rho = 0$. Since $\theta$ is holomorphic at $0$, we might forget the parabolic direction over this point, and actually, the structure of $\P^1$-bundle of ${\bf F}_{hol}$ is obtained in this way. Forgetting the parabolic direction over $\rho=0$, we obtain a map ${\bf F}_{hol}\to \tilde{X_s}$, here we identify $\tilde{X_s}$ with the Hitchin fiber in the moduli space of pairs $(E, \theta)$ with parabolic points $\{1, \lambda, \infty, t\}$ The pair  $(E, \theta)$ determines the parabolic structure over  $\{1, \lambda, \infty, t\}$. The sections $\sigma_0$ and $\sigma_{\infty}$ are determined by the choice of eigenvectors $p_0^+$ and $p_0^-$, respectively,  of $\theta$ at $\rho=0$.   In order to understanding the action of $\elem_I$ on $\sigma_0$, we take a point $(E, \theta, {\bf p}^+)$ of $\sigma_0$, ${\bf p}^+=\{p_0^+, p_1, p_{\lambda}, p_t\}$.  We want to perform an elementary transformation $\elem_I$  centered at parabolic directions $\{ p_0^+, p_1, p_{\lambda}, p_t\}$.  Now, for any $\theta$, its curve $X_{\theta}\subset \P E$ of eigenvectors is isomorphic to $\tilde{X_s}$, and the isomorphism sends $p_0^+, p_1, p_{\lambda}, p_t$ to $w_0^+, \tilde{w}_1, \tilde{w}_{\lambda}, \tilde{w}_t$, respectively. By BNR correspondence, each $(E, \theta)$ corresponds to $M_{\theta}$ in ${\rm Pic}(\tilde{X_s})$ and the variation of $M_{\theta}$ with respect to $\elem_I$ is given by \cite[Propositions 2.3 and 2.6]{FL1}:
\[
\elem_I: \quad M_{\theta} \mapsto M_{\theta} (-w_0^- - \tilde{w}_1 - \tilde{w}_{\lambda} - \tilde{w}_t+4\tilde{w}_t). 
\]
Using (\ref{wtilde}), we conclude that $\elem_I$ sends $ M_{\theta}$ to $ M_{\theta}(\tilde{w}_t -w_0^- )$.
\endproof

\subsubsection{The fiber over a reducible spectral curve} We now assume that $Y_r$ is reducible and has two nodal points, over $t_1$ and $t_2$. This happens when $r\in \Gamma(\omega_c^{\otimes 2})$, i.e. $\det(\theta)$ is holomorphic for any Higgs field in the fiber $\det^{-1}(r)$. The corresponding spectral curve $X_s$ ($s\in \Gamma(\omega_{\P^1}(0+1+\lambda+\infty))$ and $\pi^*(s)=r$) has a nodal point over $t$ and our map $\Phi$ sends the Hitchin fiber $\det^{-1}(s)\simeq {\bf F}_{hol}\cup {\bf F}_{app}$ on the $\P^1$ side into $\det^{-1}(r)$. 

\begin{thm}\label{thm:CP1}
Assume the spectral curve $Y_r$ is reducible. Then the Hitchin fiber $\det^{-1}(r)$ has two components $\det^{-1}(r) = {\bf G}_{hol}\cup {\bf G}_{app}$, which are isomorphic between them.  Moreover, the restriction $\Phi|_{\det^{-1}(s)}: \det^{-1}(s) \to \det^{-1}(r)$ is surjective. 
\end{thm}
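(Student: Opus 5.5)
We will split $\det^{-1}(r)$ according to the behaviour of $\theta$ at $t_1,t_2$, using Corollary~\ref{cor:Cthetasing}. Let ${\bf G}_{hol}$ be the closure of the locus of Higgs bundles in case (1): $\theta$ holomorphic at both $t_1,t_2$, so $E=L\oplus L^{-1}$ with $\deg L=0$ and $\theta$ diagonal with entries $\pm\alpha$, where $\alpha\in{\rm H}^0(\omega_C)$ and $\alpha^2=-r$. Let ${\bf G}_{app}$ be the closure of case (2): poles at both points. The twisted elementary transformation $\elem_{t_1,t_2}=[\otimes\mathcal O_C(w_\infty)]\circ\elem_{D}$ of (\ref{elemtranst}) preserves $\det$. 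It exchanges cases (1) and (2), as noted after Corollary~\ref{cor:Cthetasing}, and it is an automorphism of $\mathcal H(C)$. Hence it swaps the two closures and gives ${\bf G}_{hol}\simeq{\bf G}_{app}$.

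\textbf{Step 1: case (3) lies in the closures.} By Proposition~\ref{prop:stsemi} and Remark~\ref{rmk:case3cor}, every Higgs bundle of case (3) is $S$-equivalent to a diagonal holomorphic one. It is therefore a limit of case (1), so it already lies in ${\bf G}_{hol}$. This gives $\det^{-1}(r)={\bf G}_{hol}\cup{\bf G}_{app}$.

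\textbf{Step 2: irreducibility of ${\bf G}_{hol}$.} Points of case (1) are parametrized by triples $(L,p_1,p_2)$: $L\in{\rm Pic}^0(C)$ and $p_i\in\P E_{t_i}$. Since $\alpha$ is fixed up to sign, swapping $L\leftrightarrow L^{-1}$ absorbs the sign, and the nilpotency condition is automatic because the residue is zero. Stability prevents both directions from lying in the same $L^{\pm1}$. This is a $(\P^1\times\P^1)$-family over an elliptic curve, modulo the involution. It is irreducible and two-dimensional, which matches Proposition~\ref{prop:fimGhol} cited in the introduction. Its image under $\elem_{t_1,t_2}$ has generic member in case (2), so ${\bf G}_{app}\neq{\bf G}_{hol}$. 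The fiber therefore has exactly two components.

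\textbf{Step 3: surjectivity of $\Phi|_{\det^{-1}(s)}$.} This is the main obstacle. On the $\P^1$ side, $\det^{-1}(s)={\bf F}_{hol}\cup{\bf F}_{app}$ with the node over $t$ (\cite[Theorem 5.4]{FL2}), and $\Phi$ is a finite morphism of degree two (Theorem~\ref{thm:morphismPhi_0}). The plan is to show first that $\Phi({\bf F}_{hol})\subset{\bf G}_{hol}$. A Higgs field holomorphic at $t$ pulls back to one holomorphic at $t_1,t_2$, and $\elem_R$ does not touch $D$. Similarly $\Phi({\bf F}_{app})\subset{\bf G}_{app}$, since apparent means a genuine pole at $t$, which pulls back to poles at $t_1$ and $t_2$. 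Each restriction is a proper map between irreducible surfaces. It is nonconstant because it is generically finite, $\Phi$ being finite. It is therefore dominant, and by properness it is surjective onto each component.

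The delicate point is the properness and generic finiteness argument. One must check that $\Phi$ does not contract ${\bf F}_{hol}$, using finiteness of a degree two morphism between normal four-folds. One must also handle the strictly semistable points of case (3) and the involution $\elem_{0,1,\lambda,\infty}$, which by \cite{FL2} preserves or swaps ${\bf F}_{hol}$ and ${\bf F}_{app}$ here. If it preserves them, $\Phi|_{{\bf F}_{hol}}$ has degree two onto ${\bf G}_{hol}$. In either case, comparing dimensions and using the irreducibility from Step 2 yields surjectivity.
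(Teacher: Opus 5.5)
Your Steps 1--2 take a different route from the paper. You define ${\bf G}_{hol}$ and ${\bf G}_{app}$ intrinsically, as the closures of cases (1) and (2) of the trichotomy for $Y_\theta$. This gives the decomposition of $\det^{-1}(r)$, and the isomorphism via $\elem_{t_1,t_2}$, without using $\Phi$ at all; the paper instead sets ${\bf G}_{hol}:=\Phi({\bf F}_{hol})$ and ${\bf G}_{app}:=\Phi({\bf F}_{app})$. One slip in Step 2: a $(\P^1\times\P^1)$-family over an elliptic curve is three-dimensional. To get a surface you must also quotient by the diagonal $\C^*$ of automorphisms of $L\oplus L^{-1}$ and account for $S$-equivalence.

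The genuine gap is in Step 3, the surjectivity of $\Phi|_{\det^{-1}(s)}$. Your inclusions $\Phi({\bf F}_{hol})\subset{\bf G}_{hol}$ and $\Phi({\bf F}_{app})\subset{\bf G}_{app}$ are fine. (Here $\elem_{0,1,\lambda,\infty}$ preserves each ${\bf F}$, since $t$ is not one of the points where it acts.) But equality needs $\dim\Phi({\bf F}_{hol})=2$, and you get this from ``$\Phi$ being finite''. Nothing establishes that: the paper only proves that $\Phi$ is a morphism of degree two. Degree two does not stop a morphism of fourfolds, even normal ones, from contracting a surface onto a curve; a small contraction followed by a double cover does exactly that. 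So the principle you invoke is false. The surfaces ${\bf F}_{hol}$, lying over the line $\Gamma(\omega_{\P^1}^{\otimes 2}(0+1+\lambda+\infty))$, are precisely where such a degeneration could a priori occur. Also, ``nonconstant, hence dominant'' is not valid for a map between surfaces; only generic finiteness would give it.

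What is missing is a direct reason why a general point of ${\bf G}_{hol}$ is hit. The paper gets it by descent. A case (1) Higgs bundle $(L\oplus L^{-1},{\bf p},\mathrm{diag}(\alpha,-\alpha)\frac{dx}{y})$ is invariant under the elliptic involution $\sigma$: $\sigma^*L\simeq L^{-1}$, $\sigma^*\frac{dx}{y}=-\frac{dx}{y}$, $\sigma$ exchanges $t_1$ and $t_2$, and one swaps the two factors. Hence it lies in the image of $\Phi$. Any preimage in $\det^{-1}(s)$ lies in ${\bf F}_{hol}$, because points of ${\bf F}_{app}\setminus{\bf F}_{hol}$ have a pole at $t$ and are sent to Higgs fields with poles at $t_1,t_2$. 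Case (2) bundles are $\elem_{t_1,t_2}$-transforms of these; they are again $\sigma$-invariant, stable, and have preimages in ${\bf F}_{app}$. Case (3) is covered by the $S$-equivalence remark.

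Alternatively, you could prove generic finiteness of $\Phi|_{{\bf F}_{hol}}$ by hand, using the $\P^1$-bundle structure ${\bf F}_{hol}\to\tilde X_s$. You would check that the induced map $\tilde X_s\to{\rm Pic}^0(C)$, $(E,\eta)\mapsto L$, is nonconstant, and that moving $p_t$ within a fibre moves the image point. Some argument of this kind is needed; as written, Step 3 does not prove the surjectivity assertion.
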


\proof
The involution $\elem_I: \mathcal H (\P^1) \to  \mathcal H (\P^1)$ of the covering $\Phi$, $I=\{0,1,\lambda,\infty\}$, induces an involution
\[
\elem_I|_{ {\bf F}_{hol}}:  {\bf F}_{hol} \to  {\bf F}_{hol}
\] of  ${\bf F}_{hol}$ and the same for  ${\bf F}_{app}$. Indeed, a Higgs field in ${\bf F}_{hol}$ and its transformed via $\elem_I$ are holomorphic at $t$. The fiber   $\det^{-1}(r)$ contains ${\bf G}_{hol}\cup {\bf G}_{app}$, 
where ${\bf G}_{hol}=\Phi({\bf F}_{hol})$ and ${\bf G}_{app}=\Phi({\bf F}_{app})$ are isomorphic to the quotient by $\elem_I$ of ${\bf F}_{hol}$ and ${\bf F}_{app}$, respectively.

We will show that $\det^{-1}(r) = {\bf G}_{hol}\cup {\bf G}_{app}$. By Corollary~\ref{cor:Cthetasing}, three cases arise, according to the type of singularities of  the curve of eigenvectors $Y_{\theta}$. Higgs bundles of case (3) are limit of Higgs bundles of case (1) and lie in the image of $\Phi$, see Remark~\ref{rmk:case3cor}.  Any Higgs bundle of case (1) is diagonalizable, the curve $Y_{\theta}$ decomposes the vector bundle $E$, that is,  we may assume $E = L\oplus L^{-1}$   and 
\begin{eqnarray*}
\theta=\left(
\begin{array}{ccc} 
\alpha & 0 \\
0 & -\alpha  \\
\end{array}
\right)\frac{dx}{y} 
\end{eqnarray*}
with $\alpha\in \C$ and $\alpha^2=-r$. They are invariant by the elliptic involution, then lie in the image of $\Phi$, and are obtained as image of ${\bf F}_{hol}$. Finally, Higgs bundles of case (2) are obtained from case (1) by performing an elementary transformation  
\[
[\otimes\mathcal O_C(w_{\infty})]\circ \elem_{t_1+t_2}: \mathcal H(C) \to \mathcal H(C).
\]
Such a Higgs bundle has poles at $t_1$ and $t_2$, and lies in the image of ${\bf F}_{app}$. Note that ${\bf G}_{hol}$ and ${\bf G}_{app}$ are isomorphic between them, via the involution $[\otimes\mathcal O_C(w_{\infty})]\circ \elem_{t_1+t_2}$.  
\endproof

In Proposition~\ref{prop:fimGhol} below, we give a precise description of the fixed points of $\elem_I$ in ${\bf F}_{hol}$.

\section{Singular locus of the moduli space}\label{section:branch} In this section, we will see that singularities of $\mathcal H(C)$ occur along singular locus of Hitchin fibers $\det^{-1}(r)$ of type ${\bf G}_{hol}\cup {\bf G}_{app}$, over reducible spectral curves $Y_r$. It is worth comparing the results of this section with those of \cite[Theorems 7.7 and 7.8]{LR}.

\subsection{Fixed points of the involution}
In order to describe the ramification locus of $\Phi: \mathcal H(\P^1)\to \mathcal H(C)$, we proceed to studying fixed points of the involution $\elem_I: \mathcal H(\P^1)\to \mathcal H(\P^1)$, $I=\{0,1,\lambda, \infty\}$. In the next result, we show that the intersection ${\bf F}_{hol} \cap {\bf F}_{app}$ of the two components of the fiber $\det^{-1}(s)$,  coming from a spectral curve $X_s$ which is nodal over $t$, is contained in the fixed locus of  $\elem_I$.

\begin{lemma}\rm\label{lem:fixedele}
Assume that the spectral curve $X_s$ is nodal over $t$. Then ${\bf F}_{hol} \cap {\bf F}_{app}$ is contained in the fixed locus of $\elem_I$. Moreover, $\elem_I$ preserves the $\P^1$-fibration of ${\bf F}_{hol}$ obtained by forgetting the parabolic direction.   
\end{lemma}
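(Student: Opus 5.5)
The argument rests on the local analysis at the node over $t$ from \cite[Theorem 5.4]{FL2}, combined with the fact that $\elem_I$, with $I=\{0,1,\lambda,\infty\}$, does nothing at $t$. Since $t\notin I$, the elementary transformation $\elem_I$ leaves a neighbourhood of $t$ untouched. Up to the twist by $\mathcal O_{\P^1}(2)$, the bundle, the Higgs field and the parabolic direction $p_t$ near $t$ are therefore unchanged. In particular, $\theta$ is holomorphic at $t$ if and only if its transform is, so $\elem_I$ preserves both ${\bf F}_{hol}$ and ${\bf F}_{app}$, as already used in the proof of Theorem~\ref{thm:CP1}.

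For the second assertion, I would use the description of the $\P^1$-fibration of ${\bf F}_{hol}$ from \cite[Theorem 5.4]{FL2}. A point of ${\bf F}_{hol}$ is a Higgs field holomorphic at $t$, and the fibration is the map obtained by forgetting $p_t$. This sends a point to a pair $(E,\theta)$ with parabolic structure over $\{0,1,\lambda,\infty\}$, and that pair is identified with a point of $\tilde X_s$ through BNR, i.e. a line bundle $M_\theta$ on $\tilde X_s$. The fibre over such a point is the $\P^1$ of choices of $p_t\in\P E_t$. Since $\elem_I$ acts only at $0,1,\lambda,\infty$, it commutes with forgetting $p_t$. Hence it maps fibres to fibres, covering a map $\tilde X_s\to\tilde X_s$. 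As in the proof of Theorem~\ref{thm:translat}, this map is given by \cite[Propositions 2.3 and 2.6]{FL1}:
\[
M_\theta\mapsto M_\theta\Big(-\sum_{k\in I}\tilde w_k\Big)\otimes q^*\mathcal O_{\P^1}(2).
\]
Here $\tilde w_k$, for $k\in I$, are the four ramification points of $\tilde X_s\to\P^1$. I would check that this map is the identity on $\tilde X_s$. Each $2\tilde w_k$ is linearly equivalent to the hyperelliptic divisor, so $\sum_{k\in I}\tilde w_k\sim 2\cdot(\text{fibre of } q)$, and the translation is trivial. The induced map on each fibre $\P E_t\simeq\P^1$ is then the identity of $\P E_t$ transported through the canonical identification of $E_t$ with $E'_t$ (up to twist).

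For the first assertion, \cite[Theorem 5.4]{FL2} identifies ${\bf F}_{hol}\cap{\bf F}_{app}$ with the locus where $p_t$ is one of the two eigendirections $p_t^\pm$ of the (holomorphic, hence regular) value $\theta(t)$. This locus forms the two sections $\sigma_0,\sigma_\infty$. Since $\elem_I$ preserves both ${\bf F}_{hol}$ and ${\bf F}_{app}$, it preserves their intersection. Acting trivially on the base $\tilde X_s$, it either fixes each of $\sigma_0,\sigma_\infty$ pointwise or swaps them fibrewise. Swapping is excluded because $\elem_I$ does not alter the eigenline data at $t$: it sends $p_t^+$ to the eigendirection of $\theta'(t)$ with the same eigenvalue, and $\theta'(t)=\theta(t)$ under the identification. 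So both sections are pointwise fixed.

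The delicate step is the claim that the induced map on $\tilde X_s$ is the identity, rather than merely a translation preserving the fibration. It requires the linear equivalence $\sum_{k\in I}\tilde w_k\sim q^*\mathcal O(2)$ on the elliptic curve $\tilde X_s$. If instead the BNR normalization involves the preimages of the node (as $w_0^\pm$ did in Theorem~\ref{thm:translat}), I would redo that computation with the node at $t$. There the divisor contains no $w_t^\pm$, so no correction term should appear.
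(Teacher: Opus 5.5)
Your proof is correct. For the second assertion it is the paper's own argument: forget $p_t$, identify the Hitchin fibre of the four-point moduli space with $\tilde X_s$ by BNR, and check via \cite{FL1} that $\elem_I$ acts there as the identity. For the first assertion you take a different route.

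The paper works at a generic point with $\P E=\P^1\times\P^1$. It uses that $X_\eta$ is the unique $(2,2)$-curve ramified at $p_0,p_1,p_\lambda,p_\infty$ (\cite[Proposition 7]{L}) to see that the flip induced by $\elem_I$ fixes $X_\eta\cap\P E_t$. From this it deduces that the parabolic bundle is fixed when $p_t\in X_\eta$, and only then uses BNR for the Higgs field.

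You reverse the order. Triviality on the base yields an isomorphism $\phi:(E',\theta')\to(E,\theta)$ of four-point Higgs bundles. The self-map $\phi_t\circ\mathrm{can}$ of $\P E_t$ commutes with $\eta(t)$, which is regular semisimple since $\det\eta(t)=s(t)\neq 0$. So it fixes both eigenlines, hence fixes $\sigma_0\cup\sigma_\infty$ pointwise; your fix-or-swap dichotomy is then superfluous.

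Your version needs no genericity assumption and no appeal to \cite{L}. It also matches the bundle isomorphism with the Higgs-field isomorphism in one stroke, whereas the paper treats these in two separate steps. The paper's flip picture, on the other hand, is what it reuses in the next lemma to show there are no other fixed points in ${\bf F}_{hol}$.

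Two small repairs are needed.

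First, $2\tilde w_k\sim q^*(\mathrm{pt})$ for each $k$ only shows that $\sum_{k\in I}\tilde w_k-q^*\mathcal O_{\P^1}(2)$ is $2$-torsion. You need in addition $\tilde w_0+\tilde w_1+\tilde w_\lambda\sim 3\tilde w_\infty$ (the three nonzero $2$-torsion points sum to zero), which is exactly the relation the paper uses.

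Second, drop the sentence calling the fibre map ``the identity of $\P E_t$ transported through the canonical identification''. Only the map $\P E_t\to\P E'_t$ is canonical. The actual self-map $\phi_t\circ\mathrm{can}$ of the fibre is not the identity: by the paper's later results it has exactly two fixed points on each fibre. It is your eigenvalue argument, not that sentence, that proves the claim.
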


\proof
Let $(E, {\bf p}, \eta)$ be an element  of ${\bf F}_{hol} \cap {\bf F}_{app}$. We may assume that $\P E = \P^1\times \P^1$, which represents the generic case.  The component ${\bf F}_{hol}$ of $\det^{-1}(s)$ consists of Higgs bundles $(E, {\bf p}, \eta)$  in $\mathcal H(\P^1)$ which are holomorphic at $t$ and whose determinant equals $s$. Since $\eta$ is holomorphic at $t$, the curve of eigendirections $X_{\eta}\subset \P E$ does not ramify over $t$, by the same argument as in Proposition~\ref{prop:Cthetasing}.  Then $X_{\eta}$ is an elliptic curve (desingularization of $X_s$) whose ramification points coincide with parabolic directions $p_0, p_1, p_{\lambda}, p_{\infty}$ of $\eta$ over $0,1,\lambda, \infty$, consequently isomorphic to $C$.  It turns out that $X_{\eta}$ is the unique $(2,2)$ curve of $\P^1\times \P^1$ having this set of ramification points, see for example \cite[Proposition 7]{L}. After pullback and elementary transformation performed by $\Phi$, the double section $X_{\eta}$ decomposes into two disjoint sections $\P L$ and $\P L^{-1}$ which are invariant under $\theta = \Phi(\eta)$  (\cite[Proposition 7]{L}) and then $\theta$ is diagonal.

Since $(E, {\bf p}, \eta)$ lies in ${\bf F}_{app}$, it is apparent with respect to the direction $p_t$ over $t$, meaning that $p_t$ lies in $X_{\eta}$. Hence, the parabolic vector bundle $(E, {\bf p})$ is invariant by $\elem_I$, by the uniqueness of $X_{\eta}$.

In order to see that $\eta$ is also fixed by $\elem_I$, we  use BNR correspondence. Indeed, by forgetting the parabolic direction over $t$,  we can see $\eta$ as an element of  
\[
\mathcal H = \mathcal H(\P^1, 0+1+\lambda+\infty), 
\]
the moduli space with only four parabolic points. The fiber of the Hitchin map $\det: \mathcal H\to \C$ over $s$ is one dimensional. By BNR correspondence, such a fiber is  isomorphic to the spectral curve, which coincides with $X_{\eta}$. Moreover, our Higgs field $\eta$ corresponds to a line bundle $M_{\eta}$ on $X_{\eta}$. The variation of $M_{\eta}$ with respect to the elementary transformation 
\[
\elem_I = [\otimes \mathcal O_{\P^1}(2)]\circ \elem_{0+1+\lambda+\infty}
\]
has been described in \cite[Propositions 2.3 and 2.5]{FL1}, which gives
\[
\elem_I: M_{\eta} \mapsto  M_{\eta}(-p_0-p_1-p_{\lambda}-p_{\infty})\otimes q^*(\mathcal O_{\P^1}(2))  
\]    
where $q:X_{\eta} \to \P^1$ is the 2:1 cover. To conclude, we note that
\[
q^*(\mathcal O_{\P^1}(2)) \simeq  \mathcal O_{X_{\eta}}(4p_{\infty})
\]
and $p_0+p_1+p_{\lambda}\sim 3p_{\infty}$ on the elliptic curve $X_{\eta}$. This implies that $\elem_I$ is the identity on $\mathcal H$ and therefore it fixes $\eta$. In particular, this also shows that $\elem_I$ preserves the $\P^1$-fibration of ${\bf F}_{hol}$ obtained by forgetting the parabolic direction, completing the proof of the lemma.    
\endproof 

Now, we show that ${\bf F}_{hol}\cap {\bf F}_{app}$ is precisely the locus of fixed points in ${\bf F}_{hol}$. 

\begin{lemma}\label{lem:fixedinFhol}
Assume that $X_s$ is nodal over $t$ and let $\det^{-1}(s)={\bf F}_{hol} \cup {\bf F}_{app}$ be the corresponding Hitchin fiber in $\mathcal H(\P^1)$. Then the fixed points of $\elem_I$ in ${\bf F}_{hol}$ consists of ${\bf F}_{hol}\cap {\bf F}_{app}$. 
\end{lemma}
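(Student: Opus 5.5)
By Lemma~\ref{lem:fixedele} we already have ${\bf F}_{hol}\cap{\bf F}_{app}\subset {\rm Fix}(\elem_I)\cap{\bf F}_{hol}$. So only the reverse inclusion remains: a fixed point of $\elem_I$ in ${\bf F}_{hol}$ must lie in ${\bf F}_{app}$. I would work fibrewise with respect to the $\P^1$-fibration $\frak{for}:{\bf F}_{hol}\to \tilde{X_s}$ obtained by forgetting the parabolic direction $p_t$ over $t$. Lemma~\ref{lem:fixedele} says $\elem_I$ preserves this fibration.

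The proof of that lemma gives more. On the base, which is the Hitchin fiber of $\mathcal H(\P^1,0+1+\lambda+\infty)$ identified with $X_\eta\simeq\tilde{X_s}$, $\elem_I$ acts as $M_\eta\mapsto M_\eta(-p_0-p_1-p_\lambda-p_\infty)\otimes q^*\mathcal O_{\P^1}(2)$, and this is the identity. So $\elem_I$ preserves every fibre $\frak{for}^{-1}(E,\eta)\simeq\P^1$. On such a fibre the point is the choice of $p_t\in\P E_t$. The fibre contains exactly two points of ${\bf F}_{app}$, namely the two eigendirections $p_t^\pm$ of $\eta(t)$. These are the points of $X_\eta$ over $t$, and they are the intersections with the sections $\sigma_0,\sigma_\infty$.

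The key step is to show that $\elem_I$ restricted to the fibre $\P E_t$ is a nontrivial Möbius transformation. Since $t\notin I$, the elementary transformation does not touch the fibre over $t$: it is an isomorphism away from $0,1,\lambda,\infty$. Hence $\elem_I$ carries $p_t$ to its image under the bundle isomorphism $\elem_I(E)\otimes\mathcal O(2)\cong E$ that is compatible with $\eta$ (this isomorphism exists because the base point is fixed). Its restriction to $\P E_t$ is an automorphism $g$ of $\P^1$ commuting with $\eta(t)$. Because $\eta(t)$ has distinct eigenvalues (the node of $X_s$ is over $t$ with two distinct points $w_t^\pm$), $g$ is diagonal in the eigenbasis. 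Therefore $g$ fixes $p_t^\pm$, consistent with Lemma~\ref{lem:fixedele}, and is either the identity or has exactly these two fixed points.

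The main obstacle is to exclude $g=\mathrm{id}$. If $g$ were the identity on a whole fibre, then $\elem_I$ would fix every point of ${\bf F}_{hol}$, since fixing is a closed condition and the fibres vary continuously. Then $\Phi|_{{\bf F}_{hol}}$ would be generically injective onto its image and not of degree two. That contradicts the description of ${\bf G}_{hol}=\Phi({\bf F}_{hol})$ as a quotient in Theorem~\ref{thm:CP1}. More directly, the parabolic bundle $(E,{\bf p})$ with generic $p_t$ is not fixed by $\elem_I$. By \cite[Theorem 6.4]{Nestor} the induced map on $\mathcal B_{\boldsymbol\mu_c}(\P^1,\Lambda)$ has degree two, with $\elem_I$ as its nontrivial involution, and the generic $p_t$ over a fixed $(E,p_0,\dots,p_\infty)$ avoids the fixed curve. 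Alternatively, I would take the explicit generic model $E=\mathcal O\oplus\mathcal O$ from the proof of Lemma~\ref{lem:fixedele} and compute $g$ as the projective monodromy of the $(2,2)$-curve $X_\eta$ at $t$, checking that it swaps the two lifts and hence acts as $-1$ in the eigenbasis. It would follow that $g$ is an involution with fixed points exactly $p_t^\pm$, so ${\rm Fix}(\elem_I)\cap{\bf F}_{hol}=\sigma_0\cup\sigma_\infty={\bf F}_{hol}\cap{\bf F}_{app}$. Finally, I would treat the nongeneric fibres (where $\P E\neq\P^1\times\P^1$) by the same eigenvalue argument, together with closedness.
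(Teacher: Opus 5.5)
Your proposal is correct and follows the paper's own argument. There, $\elem_I$ induces a flip $\varphi_I:\P E\dashrightarrow\P E$ at $p_0,p_1,p_\lambda,p_\infty$ whose restriction to $\P E_t$ is your $g$, and $(E,{\bf p},\eta)$ is fixed iff $p_t$ lies in $X_\eta\cap\P E_t$, which are the fixed points of $\varphi_I$ on that fibre, i.e. iff $\eta$ is apparent at $p_t$.

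The paper simply takes $g\neq\mathrm{id}$ for granted, so your worry is legitimate. Of your justifications, only the $\mathcal S$-based one is sound; closedness of the fixed locus does not propagate $g\neq\mathrm{id}$, and the quotient description of ${\bf G}_{hol}$ gives no contradiction. A simpler way to close it is to set $g_x:=\varphi_I|_{\P E_x}$ for every $x\in\P^1\setminus\{0,1,\lambda,\infty\}$. Each $g_x$ is an involution commuting with the regular semisimple $\eta(x)$, so $g_x\in\{\mathrm{id},\mathrm{diag}(1,-1)\}$ is locally constant in $x$, and $g_t=\mathrm{id}$ would force $\varphi_I=\mathrm{id}$, which is impossible for a genuine flip.
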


\proof
Recall that a point $(E, {\bf p}, \eta)$ of ${\bf F}_{hol}$ consists of a Higgs field $\eta$ which is holomorphic at $t$.  Its curve of eigendirections $X_{\eta}\subset \P E$ is an elliptic curve whose ramification points coincide with parabolic directions $p_0, p_1, p_{\lambda}, p_{\infty}$ of $\eta$ over $0,1,\lambda, \infty$. The rational map $\varphi_I:\P E \dashrightarrow \P E$ induced by the elementary transformation $\elem_I$ corresponds to a flip: blowup at $p_0, p_1, p_{\lambda}, p_{\infty}$ followed by a contraction of the four old fibers. The parabolic vector bundle $(E, {\bf p})$ is fixed by $\elem_I$ if and only if $p_t$ lies in $X_{\eta}$, because $X_{\eta}\cap \P E_t$ consists of fixed points of $\varphi_I$ on the fiber $\P E_t$.  This implies that $\eta$ is also apparent with respect to the parabolic direction $p_t$, yielding  $(E, {\bf p}, \eta)\in {\bf F}_{hol}\cap {\bf F}_{app}$. 
\endproof

\begin{prop}\label{prop:fimGhol}
Let ${\bf G}_{hol}$ be the component of a reducible Hitchin fiber given by Theorem~\ref{thm:CP1}. Then  ${\bf G}_{hol}$ is a quotient of the $\P^1$-bundle ${\bf F}_{hol}$, by an involution that preserves the $\P^1$-fibration and has two fixed points on each fiber $\P^1$. 
\end{prop}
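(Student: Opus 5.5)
By Theorem~\ref{thm:CP1}, ${\bf G}_{hol}=\Phi({\bf F}_{hol})$, and $\Phi$ is a degree two covering whose deck involution is $\elem_I$, $I=\{0,1,\lambda,\infty\}$. Moreover $\elem_I$ preserves ${\bf F}_{hol}$, since a Higgs field holomorphic at $t$ stays holomorphic at $t$ after $\elem_I$. The plan is to show that $\Phi|_{{\bf F}_{hol}}$ identifies ${\bf G}_{hol}$ with ${\bf F}_{hol}/\elem_I$, and then to describe the involution fiberwise using Lemmas~\ref{lem:fixedele} and~\ref{lem:fixedinFhol}.

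\emph{Step 1: ${\bf G}_{hol}\simeq{\bf F}_{hol}/\elem_I$.} Since $\Phi\circ\elem_I=\Phi$, the restriction $\Phi|_{{\bf F}_{hol}}$ factors through the quotient. Conversely, $\Phi$ has degree two, so a generic fiber of $\Phi$ is exactly an $\elem_I$-orbit. We must rule out that a point of ${\bf F}_{hol}$ and a point of ${\bf F}_{app}\setminus{\bf F}_{hol}$ have the same image. This is the case because $\Phi(\eta)$ is holomorphic at $t_1,t_2$ if and only if $\eta$ is holomorphic at $t$: pullback by $\pi$ is étale over $t$, and $\elem_R$ only modifies the Higgs field over the ramification points. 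Hence the induced map ${\bf F}_{hol}/\elem_I\to{\bf G}_{hol}$ is finite and birational onto a normal image. I would argue normality from the fact that ${\bf G}_{hol}$ is the closure of a locus of diagonal Higgs bundles parametrized smoothly away from the fixed points, so Zariski's main theorem gives an isomorphism. This identification step is the one I expect to be the main obstacle. A fallback is to accept the quotient description up to normalization, which is presumably what is meant.

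\emph{Step 2: the involution preserves the $\P^1$-fibration.} This is exactly the second assertion of Lemma~\ref{lem:fixedele}. The forgetful map ${\bf F}_{hol}\to\tilde X_s$ drops the direction $p_t$, and $\elem_I$ acts trivially on the four-pointed moduli space $\mathcal H(\P^1,0+1+\lambda+\infty)$. Therefore $\elem_I$ maps each fiber $\P^1=\P E_t$ to itself.

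\emph{Step 3: two fixed points on each fiber.} By Lemma~\ref{lem:fixedinFhol}, the fixed locus of $\elem_I$ in ${\bf F}_{hol}$ is ${\bf F}_{hol}\cap{\bf F}_{app}$. By \cite[Theorem 5.4]{FL2} this intersection is $\sigma_0\cup\sigma_\infty$. Concretely, the fixed points on a fiber $\P E_t$ are the points where $p_t$ lies in the eigencurve $X_\eta$. Since $\eta$ is holomorphic at $t$, $X_\eta$ is unramified over $t$ and meets $\P E_t$ in two distinct points, namely the two eigendirections of $\eta(t)$, and these are distinct because $s(t)\neq0$ off the node. It remains to check that the involution is nontrivial on each fiber. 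Its fixed locus is the divisor $\sigma_0\cup\sigma_\infty$ and not the whole surface, since $\Phi$ has degree two. The restriction to a fiber is therefore an automorphism of $\P^1$ of order at most two that fixes two points. If it were trivial on one fiber, that whole fiber would be fixed; but the fixed locus meets each fiber only in $\sigma_0\cup\sigma_\infty$, so the restriction is a genuine involution with exactly those two fixed points.
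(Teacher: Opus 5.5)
Your proposal is essentially the paper's argument: the paper's proof notes that ${\bf G}_{hol}=\Phi({\bf F}_{hol})$ with $\elem_I$ the deck involution of $\Phi$, and then invokes the two preceding lemmas. Those lemmas say that $\elem_I$ preserves the forgetful $\P^1$-fibration of ${\bf F}_{hol}$, and that its fixed locus in ${\bf F}_{hol}$ is ${\bf F}_{hol}\cap{\bf F}_{app}$, namely the two eigendirections of $\eta(t)$ on each fiber $\P E_t$; these are exactly your Steps~2 and~3. Your Step~1 goes beyond the paper, which takes ${\bf G}_{hol}\simeq{\bf F}_{hol}/\elem_I$ directly from the theorem on reducible fibers, and the justification you sketch would not establish it on its own: generic fibers of $\Phi$ say nothing about fibers over the codimension-two locus ${\bf G}_{hol}$, and smoothness away from the fixed curves says nothing about normality at their images, so citing that theorem is the cleaner route.
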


\proof
We know that ${\bf G}_{hol}$ is the image of ${\bf F}_{hol}$ by $\Phi$. Since $\elem_I$ is the involution of the covering $\Phi$,  the conclusion follows from Lemma~\ref{lem:fixedele} and Lemma~\ref{lem:fixedinFhol}. 
\endproof

In the next result, we show that there is no fixed points in the complement ${\bf F}_{app} \backslash {\bf F}_{hol}$.

\begin{lemma}\label{lemma nofix}
Assume that $X_s$ is nodal over $t$ and let $\det^{-1}(s)={\bf F}_{hol} \cup {\bf F}_{app}$ be the corresponding Hitchin fiber in $\mathcal H(\P^1)$. Then $\elem_I$ has no fixed points in ${\bf F}_{app} \backslash {\bf F}_{hol}$. 
\end{lemma}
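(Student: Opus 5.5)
The approach is to reduce to ${\bf F}_{hol}$ by exploiting how $\elem_I$ interacts with the two components. First I would check that $\elem_I$ preserves both ${\bf F}_{hol}$ and ${\bf F}_{app}$. This was already observed in the proof of Theorem~\ref{thm:CP1}: $\elem_I$ acts only at $0,1,\lambda,\infty$, so it does not change the local behaviour of $\eta$ at $t$. Holomorphy at $t$ is therefore preserved, and so is apparentness with respect to $p_t$.

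The key tool is the elementary transformation over $t$, suitably twisted, $\elem_{t,\infty}:=[\otimes \mathcal O_{\P^1}(1)]\circ\elem_{t+\infty}$ or a similar even-cardinality choice. By \cite[Theorem 5.4]{FL2}, such a transformation involving $t$ swaps ${\bf F}_{hol}$ and ${\bf F}_{app}$. All elementary transformations commute up to twist, so $\elem_I$ commutes with this swapping map $\tau$. Concretely, I would take $\tau=\elem_{t+\infty}$, suitably twisted, and check that $\tau\circ\elem_I=\elem_I\circ\tau$ on $\mathcal H(\P^1)$. Both sides are $\elem_{\{0,1,\lambda,t\}}$ up to twists by square roots of trivial line bundles on $\P^1$, and on $\P^1$ such a square root is unique. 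If $\tau$ does not swap the components exactly, I would instead use the composition with $\elem_{t}$ that does; the argument only needs some transformation commuting with $\elem_I$ that maps ${\bf F}_{app}\setminus{\bf F}_{hol}$ into ${\bf F}_{hol}$.

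Now suppose $x\in{\bf F}_{app}\setminus{\bf F}_{hol}$ is fixed by $\elem_I$. Then $y=\tau(x)\in{\bf F}_{hol}$ is also fixed by $\elem_I$. By Lemma~\ref{lem:fixedinFhol}, this gives $y\in{\bf F}_{hol}\cap{\bf F}_{app}$. Since $\tau$ swaps the components, $x=\tau^{-1}(y)\in{\bf F}_{app}\cap{\bf F}_{hol}$, which contradicts $x\notin{\bf F}_{hol}$.

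The main obstacle is identifying a swapping transformation that genuinely maps ${\bf F}_{app}$ to ${\bf F}_{hol}$ and commutes with $\elem_I$. The twist bookkeeping and the action of $\elem_t$ on holomorphy at $t$ need care: a Higgs field apparent but not holomorphic at $t$ should become holomorphic after $\elem_t$ centred at $p_t$, because the residue there lies in $p_t$.

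If this route fails, I would fall back on a direct argument. Take $\eta$ in ${\bf F}_{app}\setminus{\bf F}_{hol}$. Its eigencurve $X_\eta$ is singular at the point over $t$, passing through $p_t$, by the argument of Proposition~\ref{prop:Cthetasing}. I would then show that the flip $\varphi_I$ cannot preserve this nodal configuration together with the BNR line bundle: the shift $M\mapsto M(-\sum x_i)\otimes q^*\mathcal O(2)$ is a nontrivial translation on the generalized Jacobian of the nodal curve, unlike the elliptic case in Lemma~\ref{lem:fixedele}.
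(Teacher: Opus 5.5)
Your main argument is correct, and it follows a genuinely different route from the paper's. The paper works directly on ${\bf F}_{app}\setminus{\bf F}_{hol}$. There the Higgs bundles correspond, via BNR, to honest line bundles on the nodal curve $X_s$, and $\elem_I$ acts by tensoring with $L_0=\mathcal O_{X_s}(-p_0-p_1-p_{\lambda}-p_{\infty})\otimes q^*\mathcal O_{\P^1}(2)$. The paper then shows $L_0\not\simeq\mathcal O_{X_s}$ using Bhosle's classification of degree-zero line bundles invariant under the hyperelliptic involution on a nodal hyperelliptic curve, which are indexed by partitions of $\{p_0,p_1,p_\lambda,p_\infty,p_t\}$. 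That nontriviality is exactly what your fallback asserts without proof, so the fallback on its own would not be a proof.

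Your symmetry argument avoids this step entirely, and your hedge about whether $\tau$ swaps the components is unnecessary. Near $t$, with $p_t=(1,0)$, write $\eta=\left(\begin{smallmatrix} xa & b\\ xc & -xa\end{smallmatrix}\right)\frac{dx}{x}$. Since $X_s$ is nodal over $t$, $s$ has no pole there, so $b(0)c(0)=0$. In these terms ${\bf F}_{hol}=\{b(0)=0\}$ and ${\bf F}_{app}=\{c(0)=0\}$. In the frame $(xe_2,e_1)$, $\elem_t$ sends $(a,b,c)$ to $(-a,c,b)$. Hence $\tau=[\otimes\mathcal O_{\P^1}(1)]\circ\elem_{t+\infty}$ is an involution of $\det^{-1}(s)$ that exchanges the two components. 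It commutes with $\elem_I$, because both composites equal $[\otimes\mathcal O_{\P^1}(2)]\circ\elem_{0+1+\lambda+t}$; this uses $\elem_\infty\circ\elem_\infty=[\otimes\mathcal O_{\P^1}(-\infty)]$ and the absence of torsion in ${\rm Pic}(\P^1)$. Transporting the preceding lemma (the fixed points of $\elem_I$ in ${\bf F}_{hol}$ are exactly ${\bf F}_{hol}\cap{\bf F}_{app}$) by $\tau$ then gives the statement.

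What your route buys is economy: no BNR on a singular curve and no classification of torsion line bundles. The cost is that the whole weight falls on that preceding lemma, whose proof in the paper is only a brief flip argument. The paper's route does not depend on that lemma and also gives more information: on ${\bf F}_{app}\setminus{\bf F}_{hol}$, $\elem_I$ is translation by an explicit nontrivial $2$-torsion element of the generalized Jacobian.
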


\proof
Elements of ${\bf F}_{app} \backslash {\bf F}_{hol}$ correspond to line bundles of $X_s$, via BNR correspondence, see \cite[Remark 5.5]{FL2}. Let us take an element $(E, {\bf p}, \eta)$ of ${\bf F}_{app} \backslash {\bf F}_{hol}$ given by a line bundle $M_{\eta}$ on $X_s$. We note that the curve $X_{\eta}\subset \P E$ of eigendirections of $\eta$ has a node at $t$, it is isomorphic to the spectral curve $X_s$.  

By \cite[Propositions 2.3 and 2.5]{FL1}, the elementary transformation $\elem_I$ corresponds to the following map  
\[
\elem_I: M_{\eta} \mapsto  M_{\eta}(-p_0-p_1-p_{\lambda}-p_{\infty})\otimes q^*(\mathcal O_{\P^1}(2))  
\]    
where $q:X_s \to \P^1$ is the 2:1 cover, and $p_0, p_1, p_{\lambda}, p_{\infty}$ denote ramification points over $0, 1, \lambda, \infty$. Let us denote by $p_t$ the (single) nodal singularity of $X_s$. 

We want to  show that $\elem_I$ has no fixed points in  ${\bf F}_{app} \backslash {\bf F}_{hol}$, so it is sufficient to show that the line bundle  
\[
L_0 = \mathcal O_{X_s}(-p_0-p_1-p_{\lambda}-p_{\infty})\otimes q^*(\mathcal O_{\P^1}(2))
\]
is nontrivial. 

According to \cite[Lemma 2.2 (a)]{Bho}, there is a one-one  correspondence between $i$-invariant ($i: X_s\to X_s$ is the hyperelliptic involution) line bundles on $X_s$ of degree zero and the set of partitions of 
\[
W = \{p_0, p_1, p_{\lambda}, p_{\infty}, p_t\}
\]  
into two subsets $T_1$, $T_2$ such that the sets $T_1\cap\{p_1, p_0, p_{\lambda}, p_{\infty}\}$ and $T_2\cap\{p_1, p_0, p_{\lambda}, p_{\infty}\}$ have even cardinality. The correspondence is given by 
\[
L = \left\{ \begin{array}{ll}
\mathcal O_{X_s}(\sum_{w\in T_1} w \big)\otimes q^*\mathcal O_{\P^1}\big(-\# T_1/2\big)\quad \text{if}\quad p_t\notin T_1\\
\mathcal O_{X_s}(\sum_{w\in T_1} w \big)\otimes q^*\mathcal O_{\P^1}\big((1-\#T_1)/2\big)\otimes M_0\quad \text{if}\quad p_t \in T_1
\end{array} \right.
\]
where $M_0$ is the unique square root of $\mathcal O_{X_s}$ which becomes trivial on pulling back to the desingularization $\tilde{X_s}$. Now, note that $L_0^{-1}$ is associated to the partition $T_1 = \{p_0, p_1, p_{\lambda}, p_{\infty}\}$, $T_2 = \{p_t\}$ while that $\mathcal O_{X_s}$ is associated to $T_1=\emptyset$, $T_2=W$. We conclude that $L_0$ is nontrivial and this finishes the proof of the lemma. 
\endproof

\subsection{Branch points of the involution}
In this section,  we  describe  the branch locus of $\Phi$ and consequently the singular locus of $\mathcal H(C)$.

\begin{thm}\label{thm:ramiC}
The map $\Phi: \mathcal H(\P^1) \to \mathcal H(C)$ is a surjective morphism of degree two, branched over holomorphic Higgs fields.  An element  of the branch locus can be represented by 
 a diagonal Higgs bundle $(L\oplus L^{-1}, \theta, {\bf p})$, with $L\in {\rm Pic}^0(C)$, 
  \begin{eqnarray*}
\theta=\left(
\begin{array}{ccc} 
\alpha & 0 \\
0 & -\alpha  \\
\end{array}
\right)
\end{eqnarray*}
$\alpha\in {\rm H}^0(C, \omega_c)$, the parabolic direction $p_1$ lies in $L$ and $p_2$ lies in $L^{-1}$ or vice-versa.  
\end{thm}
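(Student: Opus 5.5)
The proof has three parts: surjectivity, location of the branch locus via fixed points of the covering involution, and identification of the branch points.

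\emph{Surjectivity.} $\Phi$ is a degree two morphism (Theorem~\ref{thm:morphismPhi_0}, Proposition~\ref{prop:welldefined}) commuting with the Hitchin maps through the linear isomorphism $\pi^*$ of the bases. So it suffices to show that $\Phi$ maps $\det^{-1}(s)$ onto $\det^{-1}(r)$ for every $s$, where $r=\pi^*(s)$. I would split by the type of spectral curve.
\begin{enumerate}
\item For smooth $Y_r$, Theorem~\ref{thm:restgenfiber} gives a degree two morphism between abelian varieties of the same dimension; since ${\rm Prym}(Y_r/C)$ is irreducible, this morphism is surjective.
\item For the nilpotent cone, this is Theorem~\ref{prop:nilpcon}.
\item For irreducible nodal $Y_r$, Theorem~\ref{thm:translat} gives a birational morphism ${\bf F}_{hol}\to\det^{-1}(r)$ from a projective variety onto an irreducible fiber (Remark~\ref{rmk:GO}). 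Properness gives a closed dense image, hence surjectivity.
\item For reducible $Y_r$, this is Theorem~\ref{thm:CP1}.
\end{enumerate}
Since the fibers cover $\mathcal H(C)$, $\Phi$ is surjective.

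\emph{Locating the branch locus.} The branch locus is $\Phi$ of the fixed locus of the covering involution $\elem_I$, $I=\{0,1,\lambda,\infty\}$. On a smooth fiber, $\elem_I$ acts as the translation by $\mathcal O_{X_s}(-\sum x_i)\otimes q_s^*\mathcal O_{\P^1}(2)$, which is nontrivial because $\Phi_{0,s}$ has degree two; so there are no fixed points there. On fibers nodal over $\rho\in I$, the proof of Theorem~\ref{thm:translat} shows that $\elem_I$ swaps ${\bf F}_{hol}$ and ${\bf F}_{app}$. On their intersection $\sigma_0\cup\sigma_\infty$ it acts by the nontrivial translation $\mathcal O(\tilde w_t-w_\rho^-)$, which is nontrivial because $\tilde w_t\neq w_\rho^-$. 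Hence there are no fixed points there either.

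On the nilpotent cone, the 16 components $\mathcal N_i$ are paired off by Proposition~\ref{prop:8lines}, so fixed points can only lie in $\mathcal S$. I would argue that every fixed point of $\elem_I$ on $\mathcal S$ is also a fixed point on a neighbouring fiber nodal over $t$; equivalently, it lies in the closure of the fixed loci described below. The intended argument is that the fixed locus of an involution on the smooth fourfold $\mathcal H(\P^1)$ is smooth and $\C^*$-invariant, so each component is a closure of $\C^*$-orbits, which forces it to meet $s\neq 0$. This is the step I expect to be the main obstacle. It needs care at points where the two isolated nilpotent components might interact, and I would fall back on the explicit description of $\mathcal S$ and the $(2,2)$-curve argument of Lemma~\ref{lem:fixedele} (with $s=0$) if the $\C^*$ argument is insufficient.

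\emph{Identifying the branch points.} On fibers nodal over $t$, Lemmas~\ref{lem:fixedele}, \ref{lem:fixedinFhol} and \ref{lemma nofix} show that the fixed locus is exactly ${\bf F}_{hol}\cap{\bf F}_{app}$. By the proof of Lemma~\ref{lem:fixedele}, its image consists of $\theta=\Phi(\eta)$ diagonal on $L\oplus L^{-1}$, with $\alpha$ a holomorphic form since $r\in\Gamma(\omega_C^{\otimes 2})$. The curve $X_\eta$ passes through $p_t$, and the two points of $\pi^{-1}(p_t)$ lie on the two distinct sections $\P L$ and $\P L^{-1}$, because $\sigma$ exchanges them. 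So after pullback $p_1\in L$ and $p_2\in L^{-1}$, or vice versa. Conversely, every such diagonal bundle is obtained in this way: vary $L$ in ${\rm Pic}^0(C)$ and $\alpha$ in $H^0(\omega_C)$, and note that each is $\sigma$-invariant, exactly as in the proof of Theorem~\ref{thm:CP1}. Including the fixed points found in $\mathcal S$ gives the case $\alpha=0$.
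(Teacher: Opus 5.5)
Your surjectivity argument and your treatment of the fibers over $s\neq 0$ follow the paper's proof. Your explanation of why $p_1$ and $p_2$ land on the two different sections $\P L$ and $\P L^{-1}$ is a useful addition. The gap is the case $s=0$, which you flag yourself, and the $\C^*$ argument does not close it.

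A component of the fixed locus contained in $\mathcal S$ consists of $\C^*$-fixed points (zero Higgs fields), so $\C^*$-invariance gives no reason for it to reach $s\neq 0$. The argument can be partly repaired. Over stable parabolic bundles, $\mathcal H(\P^1)$ is $T^*\mathcal S$ and $\elem_I$ is the cotangent lift of the involution $\underline{\elem_I}$ of $\mathcal S$. So the fixed Higgs fields over a fixed point $x\in\mathcal S$ form a $+1$-eigenspace of dimension $k$, where $k$ is the local dimension at $x$ of the fixed locus of $\underline{\elem_I}$.
\begin{itemize}
\item If $k=1$, these Higgs fields are nonzero and not nilpotent. For this you also need the fact, used in the paper, that the components $\mathcal N_i$ meet each other only inside $\mathcal S$. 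They therefore lie in ${\bf F}_{hol}\cap{\bf F}_{app}$ on fibers nodal over $t$, and then $\underline\Phi(x)\in C^0$.
\item If $k=0$, i.e. $x$ is an isolated fixed point of $\underline{\elem_I}$, then $x$ is isolated in the fixed locus of $\mathcal H(\P^1)$. Nothing in your argument controls $\Phi(x)$: it would be a branch point with zero Higgs field that need not lie in $C^0$. It then need not admit a representative $L\oplus L^{-1}$ with $p_1\in L$, $p_2\in L^{-1}$, and the statement would fail.
\end{itemize}
Your fallback does not apply directly either. The $(2,2)$-curve in that lemma is the curve of eigenvectors of $\eta$, which does not exist for $\eta=0$.

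The missing ingredient is the ramification of $\Phi$ at the level of parabolic bundles, and this is exactly where the paper argues. The ramification points in $\mathcal N_{\P^1}$ lie in $\mathcal S$ and coincide with those of the underlying map $\underline\Phi:\mathcal S\to\mathcal B_{{\bf a}_c}(C,D)\simeq\P^1\times\P^1$. The image of these ramification points is the strictly semistable curve $C^0$. A point of $C^0$ with zero Higgs field is $S$-equivalent to the representative $L\oplus L^{-1}$ with $p_{t_1}\in L$, $p_{t_2}\in L^{-1}$ (or $L_i\oplus L_i$ with the two directions in distinct embeddings), which gives the case $\alpha=0$.

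If you prefer your route, the input you need is that $\underline\Phi$ identifies $\mathcal S/\underline{\elem_I}$ with the smooth surface $\P^1\times\P^1$. An isolated fixed point would produce an $A_1$ singularity in the quotient, so $k=0$ never occurs, and your tangent-space argument then finishes the case $s=0$.
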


\proof
According to Theorem~\ref{thm:morphismPhi_0}, $\Phi$ is a morphism of degree two. We claim that $\Phi$ is surjective.  First, note that any element of $\mathcal H(C)$ belongs to some Hitchin fiber $\det^{-1}(r)$, with $r\in \Gamma(\omega_c^{\otimes 2}(D))$. Since $\pi^*$ is surjective, there exists $s$ such that $\pi^*(s)=r$. Then the claim above follows from the fact that the restriction of $\Phi$ to a given fiber  $\Phi_s: \det^{-1}(s) \to \det^{-1}(r)$ is surjective (cf. Theorems~\ref{prop:nilpcon}, \ref{thm:restgenfiber}, \ref{thm:translat} and \ref{thm:CP1}).

Now, we analyze the ramification locus of  $\Phi$ in each fiber $\det^{-1}(s)$ over a nonzero $s\in \Gamma(\omega_{\P^1}^{\otimes 2}(\Lambda))$.  By Theorem~\ref{thm:restgenfiber}, the involution $\elem_I: \mathcal H(\P^1) \to \mathcal H(\P^1)$, $I=\{0,1,\lambda, \infty\}$, of $\Phi$ has no fixed points on the fiber $\det^{-1}(s)\simeq{\rm Pic}^3(X_s)$ corresponding to a smooth spectral curve $X_s$. Similarly, from Theorem~\ref{thm:translat}, the same holds on the fiber $\det^{-1}(s)\simeq{\bf F}_{hol}\cup {\bf F}_{app}$ with spectral curve having a node over $\rho\in\{0,1,\lambda, \infty\}$. Consequently, fixed points of $\elem_I$ on $\det^{-1}(s)$, with $s\neq 0$,  only occur on fibers of type ${\bf F}_{hol}\cup {\bf F}_{app}$ over spectral curves $X_s$ having a node over $t$. Consequently, over a nonzero $r=\pi^*(s)$, brach points lie in $ \det^{-1}(r) = {\bf G}_{hol}\cup {\bf G}_{app}$ where $r\in \Gamma(\omega_c^{\otimes 2})$. We will show that they only occur in ${\bf G}_{hol}\cap {\bf G}_{app}$.  

By Lemma~\ref{lem:fixedele}, ${\bf F}_{hol}\cap {\bf F}_{app}$ is fixed by $\elem_I$, let us show that there is no other fixed point on this fiber.  
By Lemma~\ref{lemma nofix}, there is no fixed points of $\elem_I$ in ${\bf F}_{app}\backslash{\bf F}_{hol}$.

Now, let us assume $s=0$. The map $\Phi$ sends the nilpotent cone $\mathcal N_{\P^1}$ of $\mathcal H(\P^1)$ onto the nilpotent cone $\mathcal N_C$ of $\mathcal H(C)$ (cf. Theorem~\ref{prop:nilpcon}). We know that $\mathcal N_{\P^1}$ has $17$ components, see Section~\ref{section:recap}, one of them is the del Pezzo surface $\mathcal S$  and the other  $16$  do not intersect each other outside $\mathcal S$. Moreover, these $16$ components are sent (pair by pair)  to $8$ components of $\mathcal N_C$. Therefore, the set of ramification points of $\Phi$ in $\mathcal N_{\P^1}$ is contained in $\mathcal S$, taking the Higgs field to be zero, and coincides with the set of ramification points of the  `underlying'  modular map $\underline{\Phi}: \mathcal S \to \P^1\times \P^1$ at the level of parabolic vector bundles. Its image via $\underline{\Phi}$ is the
curve $C^0$, the locus of strictly semistable parabolic vector bundles of Section~\ref{section:stric}.    

Let ${\rm Sing}(\mathcal H(C))$ be singular locus of $\mathcal H(C)$. The digression above shows that the image of ${\rm Sing}(\mathcal H(C))$ via the Hitchin map is the line $\Gamma(\omega_c)$.   The singular points of $\mathcal H(C)$ over an $r\neq 0$ lie in the intersection ${\bf G}_{hol}\cap {\bf G}_{app}\subset \det^{-1}(r)$, then singular for $\det^{-1}(r)$. They correspond to diagonal Higgs bundles $(L\oplus L^{-1}, \theta, {\bf p})$, with $L\in {\rm Pic}^0(C)$, 
  \begin{eqnarray*}
\theta=\left(
\begin{array}{ccc} 
\alpha & 0 \\
0 & -\alpha  \\
\end{array}
\right)
\end{eqnarray*}
$\alpha\in {\rm H}^0(C, \omega_c)$, $-\alpha^2=r$, 
satisfying conditions of the statement. The points of ${\rm Sing}(\mathcal H(C))$ over $r=0$ consist of the curve $C^0\subset \mathcal S$, which Higgs fields are identically null.  They can be represented,  up to $S$-equivalence, by diagonal Higgs bundles like above, but with $\alpha=0$.  This completes the proof of the theorem.
\endproof

\begin{cor}\label{cor:sigH}
The singular locus of $\mathcal H(C)$ is irreducible of codimension two. A generic  element of it consists of a singular point of a Hitchin fiber $\det^{-1}(r)$ whose spectral curve $Y_r$  is reducible.  
\end{cor}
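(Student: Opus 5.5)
Since $\mathcal H(\P^1)$ is smooth, $\Phi$ is a finite degree two morphism (Theorem~\ref{thm:morphismPhi_0}) and $\mathcal H(C)$ is its quotient by the involution $\elem_I$. Away from the branch locus $\mathcal H(C)$ is therefore locally isomorphic to $\mathcal H(\P^1)$, hence smooth. Conversely, at a branch point the involution acts on the tangent space of the smooth fourfold $\mathcal H(\P^1)$. Its fixed locus is a union of smooth components, and by the description below it has dimension two, so the action has eigenvalue $-1$ with multiplicity two. The quotient is then locally $\C^2\times(\C^2/\pm1)$, an $A_1$-singularity along a surface. So the plan is to identify ${\rm Sing}(\mathcal H(C))$ with the branch locus $\Phi({\rm Fix}(\elem_I))$ and to describe that locus using Theorem~\ref{thm:ramiC}.

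The main step is to parametrize the branch locus. By Theorem~\ref{thm:ramiC}, its points are the diagonal Higgs bundles $(L\oplus L^{-1},{\bf p},{\rm diag}(\alpha,-\alpha))$ with $L\in{\rm Pic}^0(C)$, $\alpha\in{\rm H}^0(C,\omega_C)\simeq\C$, and $p_1\subset L$, $p_2\subset L^{-1}$ or vice versa. These data are parametrized by $C\times\C$, after taking into account the identification $L\leftrightarrow L^{-1}$, which swaps the two parabolic conditions, and the sign of $\alpha$. So the branch locus is the image of an irreducible surface under a morphism, hence irreducible of dimension two. Since $\dim\mathcal H(C)=4$, it has codimension two. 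For $\alpha=0$ the image is the curve $C^0$, and for each $\alpha\ne0$ we get a curve in the fiber $\det^{-1}(-\alpha^2)$. The map to $C\times\C$ modulo $\pm$ is finite, so the dimension count holds.

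For the genericity statement, a generic point has $\alpha\neq0$, so $r=-\alpha^2\in\Gamma(\omega_C^{\otimes2})$ is nonzero. Then $Y_r$ is reducible, consisting of two copies of $C$ meeting over $t_1,t_2$, and by the proof of Theorem~\ref{thm:ramiC} the point lies in ${\bf G}_{hol}\cap{\bf G}_{app}$. This intersection is singular in $\det^{-1}(r)$, since the fiber has two components there (Theorem~\ref{thm:CP1}).

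I expect the main obstacle to be the codimension of the fixed locus. The fixed points in each fiber ${\bf F}_{hol}$ are two sections of the $\P^1$-bundle over an elliptic curve (Lemma~\ref{lem:fixedele}, Lemma~\ref{lem:fixedinFhol}, Proposition~\ref{prop:fimGhol}). As $s$ varies along the line $\Gamma(\omega_{\P^1}^{\otimes2}(0+1+\lambda+\infty))$, these sections sweep out a surface. One then needs to rule out isolated fixed points, or fixed points where the local quotient is smooth; Lemma~\ref{lemma nofix} and the nilpotent cone analysis in the proof of Theorem~\ref{thm:ramiC} are meant to handle this. Once the fixed locus is known to be purely two-dimensional, the local model $\C^2\times(\C^2/\pm1)$ gives singular points, and the equality ${\rm Sing}(\mathcal H(C))=$ branch locus follows.
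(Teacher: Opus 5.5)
Your proposal is correct and follows the paper's argument. The branch locus given by the theorem on the branch locus of $\Phi$ fibers over $C^0$ with fibers ${\rm H}^0(C,\omega_C)$; this is your parametrization by ${\rm Pic}^0(C)\times\C$, which is in fact injective once you fix the convention $p_1\subset L$. Its generic point, with $\alpha\neq0$, lies in ${\bf G}_{hol}\cap{\bf G}_{app}$.

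Your local-model step spells out the identification of ${\rm Sing}(\mathcal H(C))$ with the branch locus, which the paper uses without comment. That step tacitly requires $\mathcal H(C)=\mathcal H(\P^1)/\elem_I$, that is, finiteness of $\Phi$ and normality of $\mathcal H(C)$. Your worry about isolated fixed points is unnecessary: an isolated fixed point would also give a singular quotient point, and only a codimension-one fixed component could give a smooth one, which your dimension count already excludes.
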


\proof   
From Theorem~\ref{thm:ramiC}, the forgetful map from Higgs bundles to parabolic vector bundles, which forgets the Higgs field, induces a morphism ${\rm Sing}(\mathcal H(C))  \to C^0$, $(L\oplus L^{-1}, {\bf p}, \theta)\mapsto (L\oplus L^{-1}, {\bf p})$, whose fibers are isomorphic to ${\rm H}^0(C, \omega_c)$. This shows that ${\rm Sing}(\mathcal H(C))$ is irreducible of dimension two.  Moreover, in the proof  Theorem~\ref{thm:ramiC} we saw that the generic element of  ${\rm Sing}(\mathcal H(C))$ lies inside the intersection ${\bf G}_{hol}\cap {\bf G}_{app}\subset \det^{-1}(r)$, then singular for $\det^{-1}(r)$.
 \endproof

\end{document}